\documentclass[11pt,a4paper]{article}

\PassOptionsToPackage{hyphens}{url}\usepackage{hyperref}
\usepackage[url=false,backend=biber, giveninits=true]{biblatex}
\AtEveryBibitem{\clearfield{issn}}

\usepackage{authblk}
\usepackage{amsmath,amssymb,amsfonts,amsthm}
\usepackage{xspace}
\usepackage{lineno} 
\usepackage{dsfont}
\usepackage{mathtools}
\usepackage{multicol}
\usepackage{multirow}
\usepackage{physics}
\usepackage{graphicx}
\usepackage{subcaption}
\usepackage{listings}
\usepackage{relsize}
\usepackage{bigints}
\usepackage[export]{adjustbox}
\usepackage{xcolor}
\usepackage{comment}

\numberwithin{equation}{section}
\theoremstyle{plain}
\newtheorem{theorem}{Theorem}[section]
\newtheorem{lemma}[theorem]{Lemma}
\newtheorem{proposition}[theorem]{Proposition}
\newtheorem{corollary}[theorem]{Corollary}
\theoremstyle{definition}
\newtheorem{definition}[theorem]{Definition}
\newtheorem{assumption}[theorem]{Assumption}
\theoremstyle{remark}
\newtheorem{remark}[theorem]{Remark}

\newcommand{\Ito}{It\^o\xspace}

\newcommand{\Yt}{{\widetilde{Y}}}
\newcommand{\Xc}{x^\ast}
\newcommand{\Zmat}{\mathbb{A}}

\newcommand{\RG}{\mathrm{R_G}} 
\renewcommand{\P}{\mathrm{P}}
\newcommand{\D}{\mathrm{D}}

\newcommand{\BETA}{\boldsymbol{\beta}}
\newcommand{\MU}{\boldsymbol{\mu}}

\newcommand\numberthis{\addtocounter{equation}{1}\tag{\theequation}}

\newcommand{\expect}[1]{\mathbb{E}\left[#1\right]}
\newcommand{\prob}[1]{\mathbb{P}\left[#1\right]}

\author[1]{C. Kelly\thanks{conall.kelly@ucc.ie}}
\author[2]{G. J. Lord\thanks{gabriel.lord@ru.nl}}
\author[3]{M. Ptashnyk\thanks{m.ptashnyk@hw.ac.uk}}
\author[2]{S. Sonner\thanks{stefanie.sonner@ru.nl}}
\affil[1]{School of Mathematical Sciences, University College Cork, Cork, Ireland.}
\affil[2]{Department of Mathematics, IMAPP, Radboud University, Nijmegen, The Netherlands.}
\affil[3]{School of Mathematical and Computer Sciences, Maxwell Institute for Mathematical Sciences, Heriot-Watt University, Edinburgh, UK}

\begin{document}

\title{Mean-square Stability and Bifurcations for Dissipative SDEs
}

\maketitle

\abstract{
We investigate the dynamics of dissipative systems with stochastic forcing and focus in particular on mean-square stability. First we show, under a natural condition on the drift and diffusion, that the stochastic system is mean-square dissipative.  Next we examine the linearised system and state  conditions ensuring that perturbations of a linear system with affine noise are bounded.
We then relate the mean-square dynamics of the nonlinear and linearised systems.
The approach gives a straightforward deterministic method to examine the effects of stochastic forcing on the stability of equilibria of deterministic systems  and to obtain bifurcation diagrams that can be included into standard numerical continuation packages.  The technique is illustrated numerically on some standard and non-standard examples.
}

\medskip
\noindent
\textbf{Keywords:} stochastic differential equation, mean-square stability, stochastic bifurcation, mean-square dissipative.

\noindent
\textbf{MSC codes:} 60H10, 37H20,  60H35, 37H30.

\section{Introduction.}\label{sec:intro}
In the deterministic setting the analysis of mathematical models as dynamical systems and the study of their bifurcations have been highly successful.
There is a growing interest in including the effects of stochastic forcing in models, describing their dynamics and  understanding the effects of the noise, see for example \cite{doi:10.1137/24M1661534,berglund2006noise,BUDD2026135055,carrillo2022noise,Cresson_2016, Cresson_2018,duan2015introduction,FreidlinWentzell,gardiner2010stochastic,vankampen-stochastic-1981,LuxGottschalketal2022,LaingLord} for an overview of models and their analysis. We present here a new approach that complements the methods in these works. Readers interested in learning more about the general theory of stochastic differential equations may also wish to consult \cite{Khasminskii2012,LordKelly2026,LordPowellShardlow,Mao2007}.

Consider the $d$-dimensional system of nonlinear \Ito stochastic differential equations (SDEs)
\begin{equation}\label{eq:main_d}
\begin{split}
dX(t)&= F(X(t))dt + G(X(t)) dW(t), \qquad t\geq 0;\\
X(0)&=X_0\in\mathbb{R}^d,
\end{split}
\end{equation}
where $W=[W_1,\ldots,W_m]^T$ is an $m$-dimensional Wiener process, 
$F:\mathbb{R}^d\to\mathbb{R}^d$ the drift, and $G:\mathbb{R}^d\to\mathbb{R}^{d\times m}$ the diffusion. 
We suppose that the coefficients $F$ and $G$ are sufficiently regular that there exists a unique strong solution of \eqref{eq:main_d} (see Section \ref{sec:lineariz}). 
Although we treat~\eqref{eq:main_d} as an \Ito-type SDE, results can readily be applied to the Stratonovich case by taking into account the addition of an \Ito--Stratonovich correction term, e.g.~\cite{Khasminskii2012,Mao2007}. 
We also suppose that the deterministic equation, i.e.\ when $G(x)\equiv 0$ in  \eqref{eq:main_d}, admits an equilibrium $\Xc\in\mathbb{R}^d$, such that if $X_0=\Xc$ then $X(t)\equiv \Xc$. As a consequence, we have $F(\Xc)=0$, but in general it will not be the case that $G(\Xc)=0$.
Our aim is to investigate the  mean-square stability of the linearisation of \eqref{eq:main_d}, centred around $\Xc$, and to relate it to the (potentially complicated) mean-square dynamics of the nonlinear SDE \eqref{eq:main_d}. 
To this end, we introduce the notion of mean-square dissipativity and impose a condition on the coefficients $F$ and $G$ from which we can prove that \eqref{eq:main_d} has bounded moments for all $t>0$, see for example related works in~\cite{ Khasminskii2012,MATTINGLY2002185,Schurz2000} discussed in Section~\ref{sec:dissipative}.
The analysis is of most interest in the case where the equilibrium $\Xc$ is stable for the deterministic setting.

In the stochastic setting there are various well-established notions of stability for an equilibrium point $\Xc$ of \eqref{eq:main_d}, i.e.~$F(x^*)=G(x^*)=0$. These are normally defined for an equilibrium solution at zero and extend to the nonzero case by applying them directly to the SDE governing the centred process $Y(t)=X(t)-\Xc$, $t\geq 0$, with initial value $Y_0=X_0-\Xc$. We give a short overview of these characterisations of stability in Appendix~\ref{sec:stabilityreview}. Our approach generalizes the notion of mean-square asymptotic stability for equilibria $x^\ast$ of SDEs in a way that is not limited by the requirement that $G(\Xc)=0$, see Definitions~\ref{Def:linear-m-s-stability} and \ref{Def:nonlinear-m-s-stability}. This allows researchers to identify regions of interest in a nonlinear dynamic landscape where more detailed examination is merited, for example for noise induced/delayed bifurcations or for the transition times between equilibria of the deterministic system.

\begin{figure}
    \begin{center}
     \begin{subfigure}[b]{0.48\textwidth}
    \includegraphics[width=\textwidth]{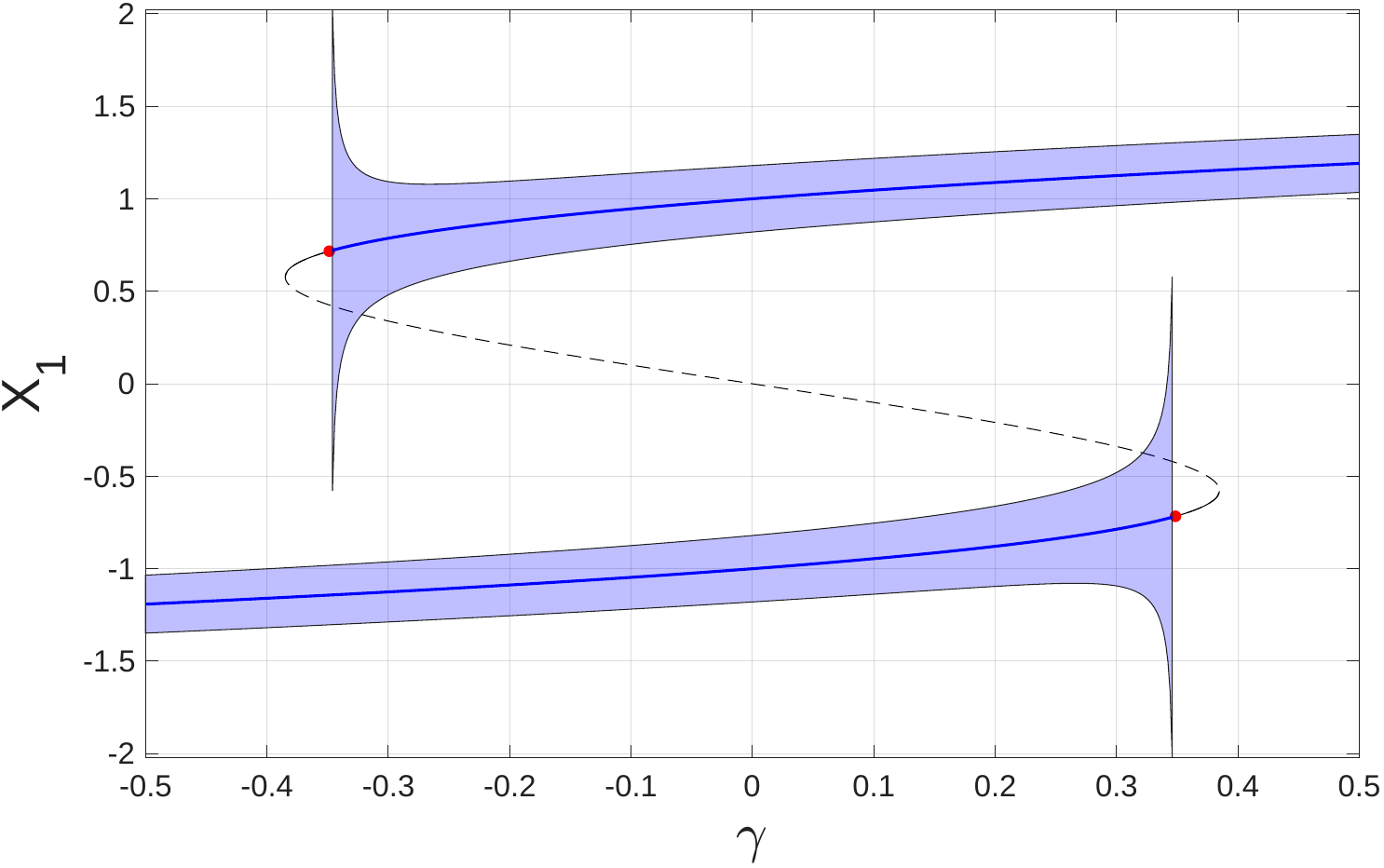}
    \caption{}
    \label{fig:BiStabMultA}
    \end{subfigure}
    \hfill
    \begin{subfigure}[b]{0.48\textwidth}
    \includegraphics[width=\textwidth]{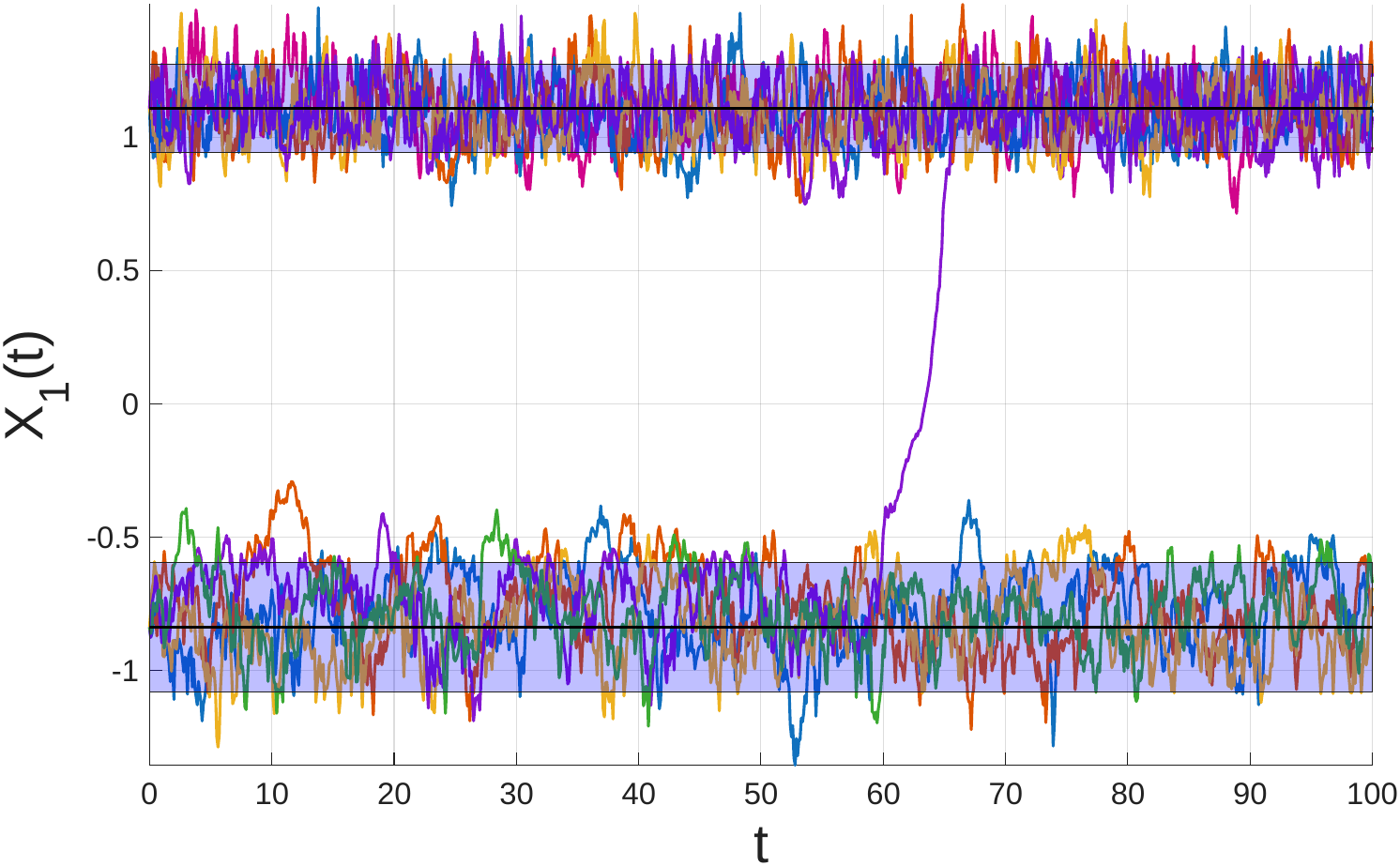}
        \caption{}
    \label{fig:BiStabMultB}
    \end{subfigure}
    \end{center}
    \caption{Bistability SDE example, see Section \ref{sec:BiStable}, with  linear multiplicative noise. In (a) we give a bifurcation diagram as a parameter $\gamma$ is varied for fixed noise intensities. All computations for this diagram are deterministic. In (b) $\gamma=0.25$ is fixed and five samples paths are shown that start close to the underlying equilibria of the deterministic system. }
    \label{fig:BiStabMult}
\end{figure}
As an example consider Figure~\ref{fig:BiStabMultA} which illustrates a bifurcation diagram for an SDE system for which the corresponding deterministic system exhibits bistability. Figure~\ref{fig:BiStabMultB}  shows some sample paths for the SDE. A detailed treatment of this example is given in Section \ref{sec:BiStable}. In Figure \ref{fig:BiStabMultA}, the underlying `S' shaped curve shows the two fold points where the equilibria of the underlying deterministic system have bifurcations (changes in stability). We emphasise that these are not equilibria for the stochastic system. Overlaid on the `S' curve is a representation of the bifurcations in the stochastic system when considered in terms of its mean-square dynamics. When we examine the linearisation of the SDE we observe a change in linear mean-square stability (in our new extended sense) at the two points marked in red, see Corollary~\ref{cor:linear-mean-square-stable}. The shaded area in Figure \ref{fig:BiStabMultA}  then indicates where we can prove the closeness (in mean-square) between the nonlinear SDE and its linearisation, see Corollary~\ref{cor:stabilityX}. The widths of the shaded areas are determined analytically from the respective linearisation in the large-time limit, see Equation~\eqref{eq:BETA}.

In Figure~\ref{fig:BiStabMultB} we illustrate some (numerical) sample paths for $t\in[0,100]$ of the nonlinear SDE. The initial data is taken close to the underlying deterministic equilibria; these are marked by solid horizontal lines. Here we have visual confirmation that the mean-square stability is not to be interpreted in a pathwise sense since, by contrast with the deterministic setting, starting close to the deterministic equilibrium does not mean that an individual trajectory will stay close. Indeed as is well known, trajectories may transition at random times between the two shaded regions as illustrated here, or indeed visit other areas of phase space (see, for example, Figures~\ref{fig:LorenzBif} and \ref{fig:LorenzTraj}  in Section~\ref{sec:lorenz}). However we observe that the variability of the sample paths, while they remain near the deterministic equilibria, closely matches the second moment estimates that we can determine from the linearised systems. The probability for a sample path to remain in a neighbourhood of the deterministic equilibrium can then be estimated using the classical Markov inequality, see Corollary \ref{cor:}.  

There is a large literature on bifurcations and transitions for solutions of SDEs through various lenses, each looking at a single part of the whole picture. For example by considering SDEs as random dynamical systems (see \cite{arnold1974stochastic, EngelKuehn}), by investigating ergodic properties, Lyapunov exponents, and equilibria (see for example \cite{Khasminskii2012,IL1999,IL2001}), and by examining pathwise transitions via large-deviation theory (see for example~\cite{berglund2006noise,FreidlinWentzell,KUEHN20111020}).  Our results complement these approaches. 

The link between stability in probability of the zero equilibrium for a nonlinear SDE with multiplicative noise, and that of its linearisation, is established in~\cite[Theorem 7.1]{Khasminskii2012} and reproduced as Proposition \ref{prop:linnonlin} in Appendix~\ref{sec:stabilityreview}. Our work connects naturally here, as it strengthens the linear-nonlinear connection to include mean-square notions of stability extended to systems with both additive and multiplicative noise. However, pathwise transitions between regions of stability in the state space that are mapped out by our analysis must be investigated by different techniques (see discussion above). 

We stress that our approach to identify regions of mean-square stability is completely deterministic and hence can be readily introduced into standard bifurcation analysis software such as {\tt MatCont}, {\tt pde2path}, {\tt auto-07p}, {\tt xpp}, {\tt PyRates}, {\tt BifurcationKit} etc. Moreover it  does not require the solution of the Fokker-Planck PDE to obtain information on the invariant measure.

The structure of the paper is as follows. In Section \ref{sec:lineariz}, we introduce the mathematical setting and provide a set of conditions for the existence of unique strong solutions of the SDE \eqref{eq:main_d}. In Section \ref{sec:dissipative} we introduce our notion of mean-square dissipativity and formulate an additional condition on the coefficients $F$ and $G$ of \eqref{eq:main_d} that will ensure it. Finally,  we verify that solutions of~\eqref{eq:main_d} then satisfy moment bounds that are uniform in time. 

In Section \ref{sec:centreSDE} we construct the linearised SDE centred around a given equilibrium of the deterministic system, and provide a definition of generalised linear mean-square stability for linear SDEs with affine noise. We also provide a definition of asymptotic mean-square stability for the nonlinear system in terms of its linearisation. 

In Section \ref{sec:MSdynamics} we extend the work of \cite{Buckwar2012} (which in turn built upon \cite{Khasminskii2012}) to construct the ordinary differential equation (ODE) system governing both the first and second moments of linear systems which allows us to determine mean-square stability via an eigenvalue analysis. This is followed in Section \ref{sec:MS_lin_nonlin} by the derivation of an upper bound, exponential in time, on the mean-square linearisation error. This provides sufficient conditions in terms of the derivatives of $F$ and $G$ for when the linearisation may be used as a reasonable guide for the mean-square dynamics of the nonlinear system in a neighbourhood of each deterministic equilibrium point. 

In Section~\ref{sec:numerics} we demonstrate the application of our approach in Sections~\ref{sec:lineariz}--\ref{sec:MS_lin_nonlin} to example systems both theoretically and numerically. Here we present stability and bifurcation diagrams, including noise induced bifurcations. Finally, relevant definitions and results on stochastic dissipativity and stability from the literature are provided in Appendix~\ref{sec:stabilityreview}.

\section{Mathematical setting.}\label{sec:lineariz}

In what follows, for  $x \in \mathbb{R}^d$ and  $\phi\in \mathrm{C}^2(\mathbb{R}^d,\mathbb{R}^d)$, we denote the Jacobian matrix of $\phi$ at $x$ by $\mathbf{D}\phi(x)$, and the second derivative of $\phi$ with respect to $x$ by $\mathbf{D}^2\phi(x)$ which forms a rank-3 tensor. Let $\langle\cdot,\cdot\rangle$ and $\|\cdot\|$ denote the standard Euclidean inner-product and norm in $\mathbb{R}^d$ respectively and $\|\cdot\|_2$ the induced matrix norm. We denote the Frobenious norm of a matrix by $\|\cdot\|_{\mathbf{F}}$ and the norm of the rank-3 tensor over $\mathbb{R}$ by $\|\cdot\|_{T_3}$.
 Recall that if $B$ is a symmetric matrix in $\mathbb{R}^{d\times d}$ then $\lambda_{\text{max}}\{B\}$ denotes the (real valued) largest eigenvalue of $B$, referred to as the principle eigenvalue of $B$.
 
We use common notation from probability that $a\wedge b :=\min\{a,b\}$ and $a \vee b := \max\{a,b\}$. 
Our stochastic processes are in the filtered probability space $(\Omega,\mathcal{F},\mathcal{F}_t,\mathbb{P})$. Here $\Omega$ is the (non-empty) sample space, $\mathcal{F}$ a $\sigma-$algebra of events,
$\mathcal{F}_t$ an increasing family of sub-algebras indexed by time and $\mathbb{P}$ a probability measure.
However, as usual in the context of SDEs  
we suppress the dependence of $X(t,\omega)$ on $\omega\in \Omega$, and simply write $X(t)$. 

Let us first state conditions on  the coefficients of \eqref{eq:main_d} that ensure the existence of unique global solutions, see \eqref{eq:localLipschitz} and \eqref{eq:monotone_fg} below. The polynomial bounds on the second derivatives of $F$ and $G$  in \eqref{eq:D^2f+D^2g} will be used in Section \ref{sec:MS_lin_nonlin} to relate the mean-square dynamics of the nonlinear and linearised systems.
\begin{assumption}\label{assum:FG_bounds}
Suppose that $F$ and $G$ satisfy the following requirements. 
\begin{enumerate}
\item A local Lipschitz condition: for each integer-valued $n\geq 1$ there exists a finite constant $K_n>0$ such that for all $x,y\in\mathbb{R}^d$ with $\|x\|\vee\|y\|\leq n$ 
\begin{equation}\label{eq:localLipschitz}
\|F(x)-F(y)\|^2\vee\|G(x)-G(y)\|^2_{\mathbf{F}}\leq K_n\|x-y\|^2.
\end{equation}
\item A monotone condition: there exists a finite constant $\alpha_1>0$ and $p\geq 2$ such that, for all $x\in\mathbb{R}^d$ 
\begin{equation}\label{eq:monotone_fg}
\langle x,F(x)\rangle+\frac{p-1}{2}\|G(x)\|^2_{\mathbf{F}}\leq \alpha_1(1+\|x\|^2).
\end{equation}
\item There exist constants $c_{1,2}\geq 0, q_{1,2}\geq 0$ such that for all $x\in\mathbb{R}^d$ and $i=1,\dots,m$, we have
\begin{align}
    \big\|\mathbf{D}^2F(x)\big\|_{\mathbf{T}_3}\leq\,\,
    c_1(1+\|x\|^{q_1}), \quad \big\|\mathbf{D}^2G_i(x)\big\|_{\mathbf{T}_3}\leq\,\, c_2(1+\|x\|^{q_2}),\label{eq:D^2f+D^2g}
\end{align}
where $G_i := [G_{1,i}, G_{2,i}, \ldots, G_{d,i}]^T$ denotes  the $i^{\text{th}}$ column of $G$.
\end{enumerate}
\end{assumption}

The first two conditions in Assumption \ref{assum:FG_bounds} are  sufficient to guarantee the existence of a unique strong solution of the SDE \eqref{eq:main_d} for any $T>0$, see for example~\cite{Khasminskii2012}. Moreover, they ensure the following moment bound:

\begin{lemma}\label{lem:L4bound2}
Let $X$ be a solution of \eqref{eq:main_d} and suppose that the conditions \eqref{eq:localLipschitz} and \eqref{eq:monotone_fg} of Assumption \ref{assum:FG_bounds} hold for some $p\geq 2$. Then, if $\alpha_1$ is the constant in \eqref{eq:monotone_fg}, we have
\begin{equation*}
\expect{\|X(t)\|^p}\leq 2^{(p-2)/2}\left(1+\expect{\|X_0\|^p}\right)e^{p\alpha_1 t},\quad t\in[0,T].
\end{equation*}
\end{lemma}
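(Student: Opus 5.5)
The natural plan is to apply It\^o's formula to the Lyapunov function $V(x)=(1+\|x\|^2)^{p/2}$ and close the estimate with Gronwall's inequality. This is the standard argument of Mao \cite{Mao2007}, and localisation handles integrability. Because $\|x\|^p\le V(x)$, it suffices to bound $\expect{V(X(t))}$. Computing derivatives, we have $\nabla V(x)=p(1+\|x\|^2)^{(p-2)/2}x$ and
\[
\nabla^2 V(x)=p(1+\|x\|^2)^{(p-2)/2}I+p(p-2)(1+\|x\|^2)^{(p-4)/2}xx^T.
\]
The generator applied to $V$ is therefore
\[
\mathcal{L}V(x)=p(1+\|x\|^2)^{(p-2)/2}\Big(\langle x,F(x)\rangle+\tfrac12\|G(x)\|_{\mathbf F}^2\Big)+\tfrac{p(p-2)}{2}(1+\|x\|^2)^{(p-4)/2}\|x^TG(x)\|^2 .
\]
Next, use $\|x^TG(x)\|^2\le\|x\|^2\|G(x)\|_{\mathbf F}^2\le(1+\|x\|^2)\|G(x)\|_{\mathbf F}^2$. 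This merges the last two terms into
\[
p(1+\|x\|^2)^{(p-2)/2}\Big(\langle x,F(x)\rangle+\tfrac{p-1}{2}\|G(x)\|_{\mathbf F}^2\Big).
\]
The monotone condition \eqref{eq:monotone_fg} then gives $\mathcal{L}V(x)\le p\alpha_1 V(x)$. This algebraic step is exactly why the factor $(p-1)/2$ appears in \eqref{eq:monotone_fg}.

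To handle integrability, introduce the stopping times $\tau_n=\inf\{t\ge0:\|X(t)\|\ge n\}$. On $[0,t\wedge\tau_n]$ the stochastic integral $\int\nabla V(X)^TG(X)\,dW$ has a bounded integrand, since $G$ is continuous by \eqref{eq:localLipschitz}. It is therefore a true martingale with zero mean. Taking expectations in It\^o's formula gives
\[
\expect{V(X(t\wedge\tau_n))}\le \expect{V(X_0)}+p\alpha_1\int_0^t\expect{V(X(s\wedge\tau_n))}\,ds .
\]
Gronwall's inequality then yields $\expect{V(X(t\wedge\tau_n))}\le \expect{V(X_0)}e^{p\alpha_1 t}$. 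Since the solution is global, $\tau_n\to\infty$ almost surely. Fatou's lemma then lets $n\to\infty$ and gives $\expect{V(X(t))}\le\expect{V(X_0)}e^{p\alpha_1 t}$.

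Finally, convert back to the form of the stated bound. By convexity of $s\mapsto s^{p/2}$ for $p\ge2$,
\[
(1+\|X_0\|^2)^{p/2}\le 2^{(p-2)/2}\big(1+\|X_0\|^p\big),
\]
and hence
\[
\expect{\|X(t)\|^p}\le 2^{(p-2)/2}\big(1+\expect{\|X_0\|^p}\big)e^{p\alpha_1 t}.
\]

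The main obstacle is the generator computation: the Hessian term must be bounded carefully so that the constant $(p-1)/2$, and not something larger, emerges. The localisation step is routine, but it must be stated explicitly because $F$ and $G$ are only locally Lipschitz.
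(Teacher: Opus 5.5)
Your proof is correct, and it is essentially the standard argument from Chapter 2 of Mao~\cite{Mao2007}, which is where the paper defers for this lemma instead of giving its own proof. That argument applies It\^o's formula to $(1+\|x\|^2)^{p/2}$, bounds $\|x^TG(x)\|^2\le(1+\|x\|^2)\|G(x)\|_{\mathbf F}^2$ to produce the factor $(p-1)/2$, closes with stopping-time localisation, Gronwall and Fatou, and finishes with the convexity bound $(1+\|X_0\|^2)^{p/2}\le 2^{(p-2)/2}(1+\|X_0\|^p)$, exactly as you do.
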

A proof may be found in Chapter 2 of Mao~\cite{Mao2007}, as well as more refined estimates on the moments.

\section{Dissipative stochastic systems.}
\label{sec:dissipative}
Below in our analysis we require bounds on the expectation of the derivatives in \eqref{eq:D^2f+D^2g} along the solutions of the SDE \eqref{eq:main_d} that grow at most polynomially in time. 
This is true, for example, for dissipative systems, and we concentrate on these here. We first introduce a notion of dissipativity for SDEs that applies in the mean-square sense and that extends the standard deterministic notion of having an absorbing ball. Schurz~\cite{Schurz2000} provides one possible such definition that is slightly more general than the notion we use here, for details we refer to Appendix \ref{sec:stabilityreview}.

\begin{definition}
    \label{Def:ms-dissipative}
    Let $X$ be the solution to \eqref{eq:main_d} with initial data $X_0\in\mathbb{R}^d$. Then the SDE~\eqref{eq:main_d} is called mean-square dissipative if there exists $R>0$ such that for all $r>0$ with $\|X_0\|^2<r^2$,  
    there exists a finite time $\mathrm{T}_{r}\geq0$ such that 
    $$\expect{\|X(t)\|^2}<R^2\quad \quad \text{for all}\quad t \geq \mathrm{T}_{r}.$$
\end{definition}
We note that Definition \ref{Def:ms-dissipative} may be adapted to take into account invariant regions of the phase space $\mathbb{R}^d$, see Sections~\ref{sec:fold} and \ref{sec:trans}.

We now formulate conditions on the coefficients of the  SDE \eqref{eq:main_d} that will allow us to prove that it is mean-square dissipative. Again, see Appendix~\ref{sec:stabilityreview} for a brief comparison with the framework of Schurz~\cite{Schurz2000}.
\begin{assumption}
    \label{ass:dissipative}
    Let $p\geq 2$ be given. There exist $\alpha_2>0$ and $\alpha_3\geq 0$ such that
\begin{equation}\label{eq:dissipative}
\langle x, F(x)\rangle + \frac{p-1}{2}\|G(x)\|^2_{\mathbf{F}}  \leq -\alpha_2\|x\|^2+\alpha_3\qquad \text{for all}\quad x\in\mathbb{R}^d.   
\end{equation}
\end{assumption}
Assumption~\ref{ass:dissipative} may be viewed as a dissipative monotonicity condition on the SDE coefficients that extends the notion of a dissipative one-sided Lipschitz bound found, for example, in~\cite{KLOEDEN20121422}. 
We now show that under Assumption~\ref{ass:dissipative} the solution of~\eqref{eq:main_d} has bounded moments for all~$t\geq0$. 
\begin{lemma}\label{lem:dissipativemomentbound}
Suppose Assumption \ref{ass:dissipative} holds and $X$ solves \eqref{eq:main_d}.
Then, for $q\geq  1$, we have 
\begin{equation*}
   \expect{\|X(t)\|^{2q}} \leq 
   \begin{cases}
   \|X_0\|^{2q}e^{-q\alpha_2t}+
   R_q,& \text{if}\ q>1;\\
   \|X_0\|^{2}e^{-2\alpha_2t}+
   R_{1},& \text{if}\ q=1,
   \end{cases}
\end{equation*}
for all $t\geq 0$, where 
\begin{equation*}
   R_q:=\begin{cases}
       \frac{1}{q e}\left(\frac{2\alpha_3}{\alpha_2}\right)^q\left(\frac{q-1}{q}\right)^{q-1},& \text{if}\ q>1;\\
       \frac{1}{e}\frac{\alpha_3}{\alpha_2},& \text{if}\ q=1.
   \end{cases}
\end{equation*} 
\end{lemma}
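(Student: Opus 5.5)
Apply Itô's formula to $V(x)=\|x\|^{2q}$ weighted by an exponential in time, and use Assumption~\ref{ass:dissipative} to get a linear differential inequality for $\expect{\|X(t)\|^{2q}}$.

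The derivatives are $\nabla V = 2q\|x\|^{2q-2}x$ and $\nabla^2 V = 2q\|x\|^{2q-2}I + 4q(q-1)\|x\|^{2q-4}xx^T$. The generator therefore satisfies
\begin{equation*}
\mathcal{L}V(x) \leq 2q\|x\|^{2q-2}\Big(\langle x,F(x)\rangle + \tfrac{2q-1}{2}\|G(x)\|_{\mathbf{F}}^2\Big),
\end{equation*}
using $\|G^Tx\|^2\leq \|x\|^2\|G\|_{\mathbf{F}}^2$. Taking $2q\le p$, so that $\tfrac{2q-1}{2}\le\tfrac{p-1}{2}$ (I would read the lemma with $p=2q$), Assumption~\ref{ass:dissipative} gives
\begin{equation*}
\mathcal{L}V(x)\leq 2q\|x\|^{2q-2}(-\alpha_2\|x\|^2+\alpha_3) = -2q\alpha_2\|x\|^{2q}+2q\alpha_3\|x\|^{2q-2}.
\end{equation*}

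For $q=1$ this is $\mathcal{L}V\le -2\alpha_2 V+2\alpha_3$. Apply Itô's formula to $e^{2\alpha_2 t}V(X(t))$, localise with stopping times $\tau_n=\inf\{t:\|X(t)\|\ge n\}$ so the martingale term has zero mean, and pass to the limit by Fatou's lemma. This yields $\expect{\|X(t)\|^2}\le \|X_0\|^2e^{-2\alpha_2 t}+\frac{\alpha_3}{\alpha_2}(1-e^{-2\alpha_2t})$. That is bounded by the stated $R_1$ only after some crude rearrangement, and whether this produces the factor $1/e$ is the point I would have to check against the authors' bookkeeping.

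For $q>1$, I would spend half of the dissipation to absorb the lower-order term. Specifically, I would bound
\begin{equation*}
2q\alpha_3\|x\|^{2q-2}-q\alpha_2\|x\|^{2q}\le \sup_{s\ge0}\big(2q\alpha_3 s^{q-1}-q\alpha_2 s^{q}\big).
\end{equation*}
The supremum is attained at $s=\frac{2\alpha_3(q-1)}{q\alpha_2}$ and has the form $C\left(\frac{2\alpha_3}{\alpha_2}\right)^q\left(\frac{q-1}{q}\right)^{q-1}$ times constants. This gives $\mathcal{L}V\le -q\alpha_2 V + M_q$. Then Itô's formula applied to $e^{q\alpha_2 t}V$, again with localisation, yields $\expect{V(X(t))}\le \|X_0\|^{2q}e^{-q\alpha_2t}+\frac{M_q}{q\alpha_2}(1-e^{-q\alpha_2 t})$, which explains the rate $q\alpha_2$ rather than $2q\alpha_2$.

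The main obstacle is matching the precise constant $R_q$, in particular the factor $1/(qe)$. I expect it comes from the Young-type optimisation of $s^{q-1}$ against $s^q$ together with an elementary bound such as $xe^{-x}\le 1/e$, applied when estimating $\int_0^t e^{-q\alpha_2(t-s)}\,ds$ against a term growing in $s$. I would try several splittings of the dissipation and redo the optimisation until the stated expression emerges. Failing that, I would prove the bound with an explicit constant of the same structure.
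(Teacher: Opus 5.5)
Your method is sound, and it is the rigorous version of the paper's argument: the generator bound with factor $\tfrac{2q-1}{2}$ (which tacitly needs $p\ge 2q$, as the paper's proof does too), absorption of the lower-order term, and It\^o's formula for $e^{\lambda t}\|X(t)\|^{2q}$ with localisation. Evaluating your supremum gives exactly $M_q=\alpha_2\bigl(\tfrac{2\alpha_3}{\alpha_2}\bigr)^q\bigl(\tfrac{q-1}{q}\bigr)^{q-1}$, so $M_q/(q\alpha_2)=eR_q$. Your argument therefore proves
\begin{align*}
\expect{\|X(t)\|^{2q}}&\le \|X_0\|^{2q}e^{-q\alpha_2 t}+eR_q\bigl(1-e^{-q\alpha_2 t}\bigr), \qquad q>1,\\
\expect{\|X(t)\|^{2}}&\le \|X_0\|^{2}e^{-2\alpha_2 t}+eR_1\bigl(1-e^{-2\alpha_2 t}\bigr).
\end{align*}
The missing factor $1/e$ you flag cannot be recovered by any rearrangement or re-splitting, because the lemma as stated is false. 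Take $dX=-\alpha X\,dt+\sigma\,dW$ with $d=m=1$ and $p=2$. Assumption~\ref{ass:dissipative} holds with $\alpha_2=\alpha$ and $\alpha_3=\sigma^2/2$, and for $X_0=0$ one has exactly $\expect{X(t)^2}=\frac{\alpha_3}{\alpha_2}\bigl(1-e^{-2\alpha_2 t}\bigr)$. This exceeds $R_1=\frac{1}{e}\frac{\alpha_3}{\alpha_2}$ as soon as $e^{-2\alpha_2 t}<1-e^{-1}$. So your $q=1$ bound is sharp, and it is \emph{not} bounded by $R_1$ after any rearrangement. The same example with $p=2q$ also breaks the stated bound for $q>1$, e.g.\ $q=3/2$: the stationary value of $\expect{|X|^3}$ is $\pi^{-1/2}(\sigma^2/\alpha)^{3/2}\approx 0.564\,(\sigma^2/\alpha)^{3/2}$, whereas $R_{3/2}\approx 0.40\,(\sigma^2/\alpha)^{3/2}$.

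Your guess about where the $1/e$ comes from is right, and it pinpoints the error in the paper's proof. For $q=1$, with $u(t)=\expect{\|X(t)\|^2}$, the paper has $u(t)\le u_0+2\alpha_3t-2\alpha_2\int_0^t u(s)\,ds$. It then invokes Gronwall to get $u(t)\le (u_0+2\alpha_3 t)e^{-2\alpha_2 t}$, and $\max_t te^{-2\alpha_2 t}=1/(2\alpha_2 e)$ produces $R_1$; the case $q>1$ is identical. Gronwall's inequality in the form ``$u\le a+\int\beta u$ with $a$ nondecreasing implies $u\le a\,e^{\int\beta}$'' requires $\beta\ge 0$. With a negative rate the integral inequality does not give this conclusion. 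Indeed, in the example above with $X_0=0$, the claimed intermediate bound $2\alpha_3te^{-2\alpha_2t}$ is strictly below the true value for every $t>0$, since $e^{y}-1>y$. Your weighted It\^o formula is exactly the correct way to exploit the negative rate. So do not try to match the stated constant: prove the lemma with $R_q$ replaced by $eR_q$ (i.e.\ $\alpha_3/\alpha_2$ when $q=1$). This corrected version still yields Corollary~\ref{cor:ms-dissipative} and Lemma~\ref{lem:RFGbound}, which only need a bound uniform in time. The explicit threshold in the subsequent Markov-inequality corollary must be enlarged by the same factor $e$.
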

\begin{proof} 
Consider $\|X(s)\|^{2q}=(X_1(s)^2+X_2(s)^2+\ldots+X_d(s)^2)^q$. We use \Ito's formula (see for example \cite{Mao2007}) to get 
\begin{align}
    \|X(t) & \|^{2q} \nonumber \\
    =&  \|X_0\|^{2q} + 2q \int_0^t\!\! \|X(s)\|^{2(q-1)}\langle X(s), F(X(s))\rangle ds \nonumber \\
   & +  q\int_0^t\!\! \|X(s)\|^{2(q-1)} \|G(X(s))\|_{\mathbf{F}}^2 ds + 
    2q(q-1)\int_0^t\!\! \|X(s)\|^{2(q-2)} \| X(s)^TG(X(s))\|^2  ds \nonumber \\
  &  + 2q \int_0^t\!\! \|X(s)\|^{2(q-1)} \langle X(s),G(X(s)) dW \rangle \nonumber\\
    \leq &   \|X_0\|^{2q} + 2q \int_0^t\!\! \|X(s)\|^{2(q-1)} \Big[ \langle X(s), F(X(s))\rangle + \frac{2q-1}{2} \|G(X(s))\|^2_{\mathbf{F}} \Big] ds \nonumber \\
  &  + 2q \int_0^t \|X(s)\|^{2(q-1)}\langle X(s),G(X(s)) dW \rangle. \label{eq:forlorenz}
\end{align}
Using Assumption \ref{ass:dissipative} it follows that
\begin{align*}
 \|X(t)  \|^{2q} &\leq  \|X_0\|^{2q}+\int_0^t\big[-2q\alpha_2\|X(s) \|^{2q}+2q\alpha_3\|X(s) \|^{2q-2}\big]ds\\
  &\quad  + 2q \int_0^t \|X(s)\|^{2(q-1)}\langle X(s),G(X(s)) dW \rangle,
\end{align*}  
and taking expectation we obtain
\begin{align}\label{eq:estGronwall}
\expect{\|X(t) \|^{2q}}&\leq\|X_0\|^{2q}+\int_0^t\big[-2q\alpha_2\expect{\|X(s) \|^{2q}}+2q\alpha_3\expect{\|X(s) \|^{2q-2}}\big]ds.
\end{align}
If $q=1$ the last term in this inequality is bounded by $2\alpha_3t$ and hence, Gronwall's lemma implies that 
\begin{align*}
    \expect{\|X(t) \|^{2}}&\leq\|X_0\|^{2}e^{-2\alpha_2t}+2\alpha_3te^{-2\alpha_2t}.
\end{align*}
Computing the maximum of the function $2\alpha_3te^{-2\alpha_2t}$ implies the stated bound for $q=1$. 
If $q>1$ we use Young's inequality to estimate the last term in \eqref{eq:estGronwall} and obtain
\begin{align*}
\expect{\|X(t) \|^{2q}}&\leq \|X_0\|^{2q}+\int_0^t(-2q\alpha_2+\varepsilon\tfrac{q-1}{q})\expect{\|X(s) \|^{2q}}ds+C_\varepsilon\frac{(2q\alpha_3)^q} q t,
\end{align*}
for $\varepsilon>0$, where we used Young's inequality in the last step and the constant $C_\varepsilon=\varepsilon^{1-q}$.  Choosing $\varepsilon$ such that $\varepsilon\tfrac{q-1}{q}=q\alpha_2$, which implies that $C_\varepsilon=(\tfrac{q-1}{q^2\alpha_2})^{q-1}$, 
yields
\begin{align*}
\expect{\|X(t) \|^{2q}}&\leq \|X_0\|^{2q}+\int_0^t(-q\alpha_2)\expect{\|X(s) \|^{2q}}ds+\left(\frac{q-1}{q}\right)^{q-1}\left(\frac{2\alpha_3}{\alpha_2}\right)^q\alpha_2  t.
\end{align*}
Hence, Gronwall's lemma implies that 
\begin{align*}
       \expect{\|X(t)\|^{2q}} &\leq \|X_0\|^{2q} e^{-q\alpha_2 t}+ \left(\frac{q-1}{q}\right)^{q-1}\left(\frac{2\alpha_3}{\alpha_2}\right)^q\alpha_2e^{-q\alpha_2t}t\\
       &\leq \|X_0\|^{2q} e^{-q\alpha_2t}
+\left(\frac{q-1}{q}\right)^{q-1}\left(\frac{2\alpha_3}{\alpha_2}\right)^q\frac{1}{q e}.
   \end{align*}
\end{proof}
\begin{corollary}
\label{cor:ms-dissipative}
    The SDE \eqref{eq:main_d} under Assumption \ref{ass:dissipative} is mean-square dissipative in the sense of Definition \ref{Def:ms-dissipative}.
\end{corollary}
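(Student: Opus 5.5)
The plan is to apply Lemma~\ref{lem:dissipativemomentbound} with $q=1$ and then choose the radius $R$ and the times $\mathrm{T}_r$ explicitly. One point needs checking first. Assumption~\ref{ass:dissipative} is stated for some $p\geq 2$, while the $q=1$ case of the lemma uses the condition with the factor $\frac{1}{2}\|G\|^2_{\mathbf{F}}$ (that is, $2q-1=1$). Since $\frac{p-1}{2}\geq\frac12$ and $\|G(x)\|_{\mathbf{F}}^2\geq 0$, we have
\[
\langle x,F(x)\rangle+\tfrac12\|G(x)\|_{\mathbf{F}}^2\leq \langle x,F(x)\rangle+\tfrac{p-1}{2}\|G(x)\|_{\mathbf{F}}^2\leq -\alpha_2\|x\|^2+\alpha_3 .
\]
So the hypothesis of the lemma for $q=1$ is implied by the assumption for any $p\geq 2$, and the lemma gives, for all $t\geq 0$,
\[
\expect{\|X(t)\|^2}\leq \|X_0\|^2e^{-2\alpha_2 t}+R_1,\qquad R_1=\frac{\alpha_3}{e\,\alpha_2}.
\]

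Next I fix $R$ independently of the initial data, for instance $R^2:=R_1+1$. This is strictly positive even when $\alpha_3=0$. Given $r>0$ with $\|X_0\|^2<r^2$, the bound becomes $\expect{\|X(t)\|^2}< r^2e^{-2\alpha_2 t}+R_1$. It therefore suffices to have $r^2e^{-2\alpha_2 t}\leq 1$, which holds for
\[
t\geq \mathrm{T}_r:=\max\Big\{0,\frac{\ln(r^2)}{2\alpha_2}\Big\}=\frac{(\ln r)\vee 0}{\alpha_2}.
\]
This time is finite because $\alpha_2>0$. For all $t\geq\mathrm{T}_r$ we then get $\expect{\|X(t)\|^2}<1+R_1=R^2$. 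This is exactly the condition in Definition~\ref{Def:ms-dissipative}, with $R$ independent of $r$.

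The only delicate point is the compatibility of the factor $\frac{p-1}{2}$ with the $q=1$ case of the lemma, which is settled by the monotonicity in $p$ shown above. The strict inequality is obtained from the strict bound $\|X_0\|^2<r^2$. If $X_0$ were random, the same argument would go through with $\expect{\|X_0\|^2}$ in place of $\|X_0\|^2$.
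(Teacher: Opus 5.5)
Your argument is correct and matches the paper, which states this corollary without separate proof as an immediate consequence of Lemma~\ref{lem:dissipativemomentbound} with $q=1$. Your check that $\frac{p-1}{2}\geq\frac12$ lets Assumption~\ref{ass:dissipative} cover the $\frac{2q-1}{2}=\frac12$ factor used in that lemma is the right compatibility condition. One caution: your reasoning only uses the shape $\|X_0\|^2e^{-2\alpha_2 t}+\mathrm{const}$ of the bound, which is fortunate, because the lemma's constant $R_1=\alpha_3/(e\alpha_2)$ comes from applying Gronwall to an integral inequality with a negative rate and is too small (for $dX=-X\,dt+\sqrt{2}\,dW$ with $p=2$ one has $\alpha_2=\alpha_3=1$, yet $\expect{X(t)^2}\to 1>1/e$); applying It\^o's formula to $e^{2\alpha_2 t}\|X(t)\|^2$ gives the constant $\alpha_3/\alpha_2$ instead, and with $R^2:=\alpha_3/\alpha_2+1$ your choice of $\mathrm{T}_r$ works verbatim.
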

By an application of the standard Markov inequality, see for example \cite[Chapter 2]{LordKelly2026}, we obtain the following corollary. 
\begin{corollary}
\label{cor:}
    Under Assumption \ref{ass:dissipative} solutions to \eqref{eq:main_d} satisfy for given $\varepsilon>0$ 
    $$\prob{\|X(t)\|^{2q} > \rho}\leq \varepsilon\qquad \text{for all } t\geq 0,$$
    where $\rho\geq (
    R_q+\|X_0\|^{2q})\varepsilon^{-1}$ and $R_q$ is given in Lemma~\ref{lem:dissipativemomentbound}.  
    Moreover, for given $\varepsilon, \delta>0$ there exists $t^*>0$ such that 
    $$\prob{\|X(t)\|^{2q} > (
    R_q+\delta)\varepsilon^{-1}}\leq \varepsilon,\qquad \text{for all } t\geq t^*.$$
\end{corollary}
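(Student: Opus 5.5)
The plan is to combine the uniform-in-time moment bound of Lemma~\ref{lem:dissipativemomentbound} with Markov's inequality applied to the nonnegative random variable $\|X(t)\|^{2q}$. For any $\rho>0$ and $t\geq 0$, Markov gives
\begin{equation*}
\prob{\|X(t)\|^{2q}>\rho}\leq \frac{\expect{\|X(t)\|^{2q}}}{\rho}.
\end{equation*}
Both parts of the corollary then reduce to bounding the right-hand side by $\varepsilon$.

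For the first statement, I would use that the exponential factors in Lemma~\ref{lem:dissipativemomentbound} are at most one for $t\geq0$. In both cases $q>1$ and $q=1$ this gives $\expect{\|X(t)\|^{2q}}\leq \|X_0\|^{2q}+R_q$ for all $t\geq 0$. If $\rho\geq (R_q+\|X_0\|^{2q})\varepsilon^{-1}$, the Markov bound is at most $(R_q+\|X_0\|^{2q})/\rho\leq\varepsilon$, uniformly in $t$. Here $X_0$ is deterministic, as in the lemma. If $X_0$ were random and independent of $W$, one would replace $\|X_0\|^{2q}$ by its expectation.

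For the second statement, fix $\delta>0$. I choose $t^*$ so that the transient term is at most $\delta$. For $q>1$, this means $\|X_0\|^{2q}e^{-q\alpha_2 t}\leq\delta$, so I take
\begin{equation*}
t^*=\max\Bigl\{\frac{1}{q\alpha_2}\ln\bigl(\|X_0\|^{2q}/\delta\bigr),\,0\Bigr\}+1.
\end{equation*}
For $q=1$, I use the rate $2\alpha_2$ instead. This requires $\alpha_2>0$, which holds by Assumption~\ref{ass:dissipative}. Then for $t\geq t^*$ we have $\expect{\|X(t)\|^{2q}}\leq R_q+\delta$. Markov with $\rho=(R_q+\delta)\varepsilon^{-1}$ gives probability at most $\varepsilon$.

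There is no real obstacle. The only points needing care are keeping the two cases $q=1$ and $q>1$ separate when choosing $t^*$, and checking that Lemma~\ref{lem:dissipativemomentbound} is used only for $q\geq1$.
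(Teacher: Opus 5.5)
Your proposal is correct and is exactly the paper's argument: Markov's inequality applied to $\|X(t)\|^{2q}$ together with the uniform-in-time bound of Lemma~\ref{lem:dissipativemomentbound}, with the exponential transient pushed below $\delta$ for large $t$ (your explicit $t^*$ and separate treatment of $q=1$ and $q>1$ just fill in details the paper leaves implicit). One caveat concerns the paper rather than your reasoning: Lemma~\ref{lem:dissipativemomentbound} applies Gronwall's lemma with a negative rate, which makes the stated $R_q$ too small by a factor $e$ (for $dX=-aX\,dt+\sigma\,dW$ with $p=2$, $\alpha_2=a$, $\alpha_3=\sigma^2/2$, the stationary second moment is $\alpha_3/\alpha_2=\sigma^2/(2a)$, not $\sigma^2/(2ea)$), but your Markov argument goes through verbatim once $R_q$ is replaced by the corrected constant.
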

As a consequence by taking $\rho$ sufficiently large the probability of the mean-square norm of individual paths getting larger than $\rho$ can be made arbitrarily small, and  the probability that the mean-square norm of individual paths is bounded by $ (R_q+\delta)\varepsilon^{-1}$ is arbitrarily close to one, for large enough times.

\section{Linearisation and mean-square stability.}
\label{sec:centreSDE}

Let $\Xc\in\mathbb{R}^d$ be an equilibrium of the deterministic equation, i.e.~$F(\Xc)=0$, and denote the  solution of \eqref{eq:main_d} centred around $\Xc$ as $Y(t):=X(t)-\Xc$, $t\geq 0$.
The resulting centered SDE  can then be written in integral form as 
\[
Y(t) = X(t) -\Xc= X_0-\Xc + \int_0^t F(X(s)) ds + \int_0^t  G(X(s)) dW(s),\quad t\geq 0.
\]
Assuming appropriate moment bounds on the first and second derivatives of $F$ and $G_i$, the $i$th column of $G$, we may expand each of the drift and diffusion coefficients of \eqref{eq:main_d} as a Taylor series around $\Xc$, 
\begin{equation*}
\begin{aligned}
F(x)& = F(\Xc) + \mathbf{D}F(\Xc) (x- \Xc)+R_F(x-\Xc,\Xc), \\
G_i(x)& = G_i(\Xc) + \mathbf{D}G_i(\Xc)(x-\Xc)+R_{G_i}(x-\Xc,\Xc),
\end{aligned} 
\end{equation*}
for  $i=1,\ldots,m$, where $R_F= (R_{F,j})_{j=1, \ldots d}$ and $R_{G_i}= (R_{G_i, j})_{j=1, \ldots, d}$ are remainder terms. These can be written in integral form as
\begin{equation}
\label{eq:RB}
R_{B}(y,\Xc):= \int_0^1 (1-\tau) y^T\mathbf{D}^2 B(\Xc + \tau y)y d\tau,
\end{equation}
where  $B$ represents either $F$ or $G_j$ for $j =1, \ldots, m$. We introduce the notation  
\begin{equation}\label{eq:RG}
\RG(Y(s),\Xc):=\left[R_{G_1}(Y(s),\Xc),\ldots,R_{G_m}(Y(s),\Xc)\right]\in\mathbb{R}^{d\times m},
\end{equation}
for all $s\geq 0$. Then, using that $F(\Xc)=0$, we can write the centred nonlinear equation 
in integral form as
\begin{multline}\label{eq:main_Y}
Y(t)=Y(0)+\int_{0}^{t}\mathbf{D}F(\Xc)Y(s)ds
\\+\int_{0}^{t}\left(G(\Xc)+\left[\mathbf{D}G_1(\Xc)Y(s),\ldots,\mathbf{D}G_m(\Xc)Y(s)\right]\right)dW(s)
\\+\int_{0}^t R_F(Y(s),\Xc)ds+\int_{0}^{t}\RG(Y(s),\Xc)dW(s),\quad t\geq 0.
\end{multline}
To shorten notation we write it as an SDE,
\begin{equation}\label{eq:main_YSDE}
\begin{split}
dY(t)&=f(Y(t))dt+g(Y(t))dW(t),\quad t\geq 0;\\
Y(0)&=Y_0=X_0-\Xc,
\end{split}
\end{equation}
where we introduced the new coefficients
\begin{equation*}
\begin{split}
f(y)&:=\mathbf{D}F(\Xc)y+R_F(y,\Xc);
\\g(y)&:=G(\Xc)+[\mathbf{D}G_1(\Xc)y,\ldots,\mathbf{D}G_m(\Xc)y]+\RG(y,\Xc).
\end{split}
\end{equation*}

The linearisation of \eqref{eq:main_d} centred around $\Xc$ may then be defined as the process $\Yt$ satisfying the SDE obtained by deleting the remainder terms in \eqref{eq:main_Y},
\begin{multline*}
\Yt(t)=\Yt(0)+\int_{0}^{t}\mathbf{D}F(\Xc)\Yt(s)ds\\
+\int_{0}^{t}\left\{G(\Xc)+\left[\mathbf{D}G_1(\Xc)\Yt(s),\ldots,\mathbf{D}G_m(\Xc)\Yt(s)\right]\right\}dW(s),
\end{multline*}
where $\Yt(0):=Y_0=X_0-\Xc$. It can be re-written as the SDE 
\begin{equation}\label{eq:lin_gen}
    d\Yt(t) =  \mathbf{D}F(\Xc)\Yt(t) dt + \left(G(\Xc)+\left[\mathbf{D}G_1(\Xc)\Yt(t),\ldots,\mathbf{D}G_m(\Xc)\Yt(t)\right]\right) dW(t).
\end{equation}
\begin{remark}
    Note that for \eqref{eq:main_d} with additive noise, i.e.~$G\equiv G_0\in\mathbb{R}^{d\times m}$ for the linearisation centred around $\Xc$ we have 
    $$
    g(y)= G_0,
    $$
    while for an SDE with linear multiplicative noise, i.e.~$G(X)= G_0X$ with $G_0\in\mathbb{R}^{d\times m}$, we obtain affine noise, 
    $$
    g(y)= G_0\Xc+G_0 y.
    $$
    In particular, if $\Xc\neq0$ then $\|\Xc\|$ impacts the intensity of the noise in \eqref{eq:lin_gen}. 
\end{remark}
We now introduce our notions of mean-square stability.  First we consider the stability of the linearisation. 
\begin{definition}\label{Def:linear-m-s-stability}
Let $\Yt$ be the solution of \eqref{eq:lin_gen} with initial data $\Yt(0)\in\mathbb{R}^d$. 
    The point $\Xc\in\mathbb{R}^d$ is said to be linearly mean-square  stable for the system \eqref{eq:main_d} if  there exist a constant $\alpha \geq 0$ and a time $t^\ast>0$ sufficiently large such that for all $\Yt(0)\in\mathbb{R}^d$
$$\expect{\|\Yt(t)\|^2} \leq \alpha^2 \quad \text{for all } t\geq t^\ast.$$
We then  define $\BETA>0$ as    \begin{equation}
    \label{eq:BETA}
    \BETA^2:=\lim_{t\to\infty} \expect{\|\Yt(t)\|^2}<\infty.
    \end{equation}
\end{definition}
Stability in this sense ensures that, for the linear system, perturbations to the point $\Xc$ are controlled, though we note that $\BETA$ cannot be made arbitrarily small by manipulation of the initial value $\Yt(0)$, and this contrasts with classical definitions of stability (see Definition \ref{def:classicalStochStab}). 
Nonetheless, when $\BETA=0$ then the equilibrium is asymptotically mean-square stable in the classical sense (again see Definition~\ref{def:classicalStochStab}). Moreover, via Theorem~\ref{thm:MSlin_gen} and Remark~\ref{Rem:linearizationSDE}, the constant $\BETA$ may be characterized in terms of the additive noise coefficient $G(\Xc)$. Therefore, Definition~\ref{Def:linear-m-s-stability} may be viewed as an extension of the classical notion of asymptotic mean-square stability to affine noise systems. 

The next definition relates the mean-square stability of the SDE \eqref{eq:main_d} and its linearisation~\eqref{eq:lin_gen}.

\begin{definition}
\label{Def:nonlinear-m-s-stability}
The point $\Xc\in \mathbb R^d$ is said to be nonlinearly mean-square stable for the system \eqref{eq:main_d} if it is linearly mean-square  stable and 
    $$\lim_{t\to\infty} \expect{\|Z(t)\|^2}=0,$$
    where $Z$ is the stochastic process defined pathwise by 
    \begin{equation}
    \label{eq:Zdiff}
    Z(t):=Y(t)-\Yt(t),\quad t\geq 0.    
   \end{equation}
\end{definition}
That is, when $\Xc$ is nonlinearly mean-square  stable the nonlinear dynamics can be proved to be close to the linear dynamics as $t\to\infty$ in mean-square. In Section \ref{sec:MSdynamics} we examine the dynamics of the linear system and in Section \ref{sec:MS_lin_nonlin} we relate this to the dynamics of the nonlinear system. 

\section{The mean-square stability of linear SDE systems.}\label{sec:MSdynamics}
We start by gathering results on linear SDEs and moments before applying them in Corollary~\ref{cor:linear-mean-square-stable} to formulate conditions for linear mean-square stability.

Consider a linear SDE  of the form
\begin{align}\label{eq:sdeLinMult}
\begin{split}
\mathrm{d} \Yt(t)&= (A\Yt(t)+\Lambda)\mathrm{d} t + \sum_{i=1}^m (B_i \Yt(t)+\Gamma_i) \mathrm{d} W_i(t),\quad t\geq 0,\\
\Yt(0)&=\Yt_0\in\mathbb{R}^d,
\end{split}
\end{align}
where $A\in\mathbb{R}^{d\times d}$, $\Lambda\in\mathbb{R}^d$, $B_i\in\mathbb{R}^{d\times d}$ and $\Gamma_i\in\mathbb{R}^d$ for $i=1,\ldots,m$, and $W=[W_1,\ldots,W_m]^T$ is an $m$-dimensional standard Brownian motion. 
\begin{remark}\label{Rem:linearizationSDE}
We have made a slight abuse of notation by using $\Yt$ in~\eqref{eq:sdeLinMult} rather than defining a new process. However, we will later apply results in this section to the linearization~\eqref{eq:lin_gen}, where
\begin{equation}\label{eq:linearization}
A:=\mathbf{D}F(\Xc)\in\mathbb{R}^{d\times d};\quad B_i:=\mathbf{D}G_i(\Xc)\in\mathbb{R}^{d\times d};\quad \Lambda=0\in\mathbb{R}^d;\quad \Gamma_i:=G_i(\Xc)\in\mathbb{R}^d.
\end{equation}
\end{remark}
Arnold~\cite{arnold1974stochastic} showed that the mean vector 
and second moment matrix of the linear SDE satisfy the ODE systems given in the next lemma.

\begin{lemma}[Theorem 8.5.5, Arnold~\cite{arnold1974stochastic}]\label{lem:arnold}
    Let $\Yt$ be a solution of the SDE \eqref{eq:sdeLinMult}. Then the following holds:
    \begin{enumerate}
        \item The mean vector $M(t):=\expect{\Yt(t)}$, $t\geq 0$, is the unique solution of the ODE
        \[
            M'(t)=AM(t)+\Lambda,
        \]
        with initial value $M(0)=\Yt(0)$.
        \item The second moment matrix $P(t):=\expect{\Yt(t)\Yt(t)^T}$, $t\geq 0$, is the unique nonnegative definite symmetric solution of the equation
        \begin{multline}\label{eq:Pode}
            P'(t)=AP(t)+P(t)A^T+\Lambda M(t)^T+M(t)\Lambda^T
            \\+\sum_{i=1}^{m}\left(B_i P(t)B_i^T+B_i M(t)\Gamma_i^T+\Gamma_i M(t)^T B_i^T+\Gamma_i\Gamma_i^T\right),
        \end{multline}
        with initial value $P(0)=\Yt(0)\Yt(0)^T$.
    \end{enumerate}
\end{lemma}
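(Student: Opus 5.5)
The plan is to apply \Ito's formula to the components of $\Yt$ and to the products $\Yt_j\Yt_k$, take expectations so that the stochastic integrals vanish, and read off the ODEs; uniqueness then follows from linear ODE theory. Before that we make sure all objects are well defined. The coefficients of \eqref{eq:sdeLinMult} are affine, hence globally Lipschitz with linear growth. So there is a unique strong solution, and by Lemma~\ref{lem:L4bound2} (or the standard linear-growth estimates) $\sup_{t\in[0,T]}\expect{\|\Yt(t)\|^p}<\infty$ for every $p\geq 2$ and $T>0$. This integrability is what makes the \Ito integrals below true martingales with zero mean, and it lets us exchange expectation and time integration via Fubini.

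For the mean, write \eqref{eq:sdeLinMult} in integral form. The integrands $B_i\Yt(s)+\Gamma_i$ are square integrable on $[0,T]$, so taking expectations kills the $dW_i$ terms and gives
\[
M(t)=\Yt_0+\int_0^t (AM(s)+\Lambda)\,ds.
\]
Since $M$ is continuous (by dominated convergence and continuity of paths), it is differentiable and satisfies $M'=AM+\Lambda$ with $M(0)=\Yt_0$. Uniqueness is immediate because this is a linear ODE with constant coefficients.

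For the second moment, apply the \Ito product rule to the matrix process $\Yt\Yt^T$:
\[
d(\Yt\Yt^T)=(d\Yt)\Yt^T+\Yt(d\Yt)^T+\sum_{i=1}^m (B_i\Yt+\Gamma_i)(B_i\Yt+\Gamma_i)^T\,dt,
\]
using $dW_i\,dW_j=\delta_{ij}\,dt$. Expanding the drift part gives $A\Yt\Yt^T+\Yt\Yt^TA^T+\Lambda\Yt^T+\Yt\Lambda^T$. The quadratic variation term expands to $B_i\Yt\Yt^TB_i^T+B_i\Yt\Gamma_i^T+\Gamma_i\Yt^TB_i^T+\Gamma_i\Gamma_i^T$. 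The martingale terms have integrands of the form $\Yt(B_i\Yt+\Gamma_i)^T$, which are in $L^2$ thanks to the fourth-moment bound, so they have zero expectation. Taking expectations, with $\expect{\Yt}=M$ and $\expect{\Yt\Yt^T}=P$ (linearity of expectation), yields exactly \eqref{eq:Pode} in integral form. Continuity of $P$ then gives the differential form with $P(0)=\Yt_0\Yt_0^T$.

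Symmetry and nonnegative definiteness of $P(t)$ are inherited directly from $\Yt\Yt^T$, since $v^TP(t)v=\expect{\langle v,\Yt(t)\rangle^2}\geq0$. For uniqueness, once $M$ is fixed, \eqref{eq:Pode} is a linear inhomogeneous ODE on $\mathbb{R}^{d\times d}$: the map $P\mapsto AP+PA^T+\sum_i B_iPB_i^T$ is a bounded linear operator on that space, equivalently a $d^2\times d^2$ system after vectorisation, and the forcing terms are continuous in $t$. Hence there is a unique solution, which is automatically the nonnegative definite symmetric one identified above. The only step needing care is justifying the zero-mean property of the stochastic integrals, which requires the moment bounds. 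For affine coefficients these bounds are standard, so I expect no real obstacle.
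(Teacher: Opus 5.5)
Your proof is correct. The paper gives no proof of its own and simply cites Arnold's Theorem 8.5.5, which is proved by essentially your argument: take expectations in the integral form for $M$, apply It\^o's product rule to $\Yt\Yt^T$ using moment bounds to discard the martingale terms, and invoke uniqueness for a linear ODE with continuous forcing. One small simplification is available: continuity (indeed $C^1$ regularity) of $M$ and $P$ follows directly from the integral identities, whose integrands are locally bounded by the moment bounds, so you do not need the dominated-convergence step with its unspecified dominating function.
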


We recall the following notation.
\begin{itemize}
\item 
The vectorisation $\mathrm{vec}(A)$ of a matrix $A=(a_{ij})\in\mathbb{R}^{m\times n}$ transforms it into an $mn\times 1$ column vector by stacking the columns of $A$ on top of one another, so that
$$\mathrm{vec}(A) := (a_{11}, a_{12}, \ldots, a_{1n}, a_{21}, \ldots, a_{mn})^T.
$$
\item The Kronecker product of  matrices $A\in\mathbb{R}^{m\times n}$ and  $B\in\mathbb{R}^{p\times q}$ is the
matrix defined by
\begin{equation*}
A \otimes B := \left( \begin{array}{ccc}
 a_{11} B & \dots & a_{1n} B\\
  \vdots & \ddots & \vdots \\
 a_{m1} B & \dots & a_{mn} B
\end{array}\right) \in\mathbb{R}^{mp\times nq}\,.
\end{equation*}
\end{itemize}

The proof of our main result in this section relies upon the following properties of Kronecker products and the vectorisation of a matrix.
\begin{lemma}[Magnus \& Neudecker~\cite{magnus99}]\label{lem:KP_vec}
Let $A$, $B$, $C$, $D$ be real matrices.
\begin{enumerate}
    \item If the matrices $A+B$ and $C+D$ exist, then 
    $$(A+B)\otimes (C+D)=A\otimes C+A\otimes D+B\otimes C+B\otimes D.$$ 
    \item  If $A$, $B$, $C$ are three matrices, such that the matrix product $ABC$ exists, then $$\mathrm{vec}(ABC)=(C^T\otimes A)\mathrm{vec}(B).$$
\end{enumerate}
\end{lemma}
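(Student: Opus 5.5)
Both identities are checked directly from the definitions, working block by block. For the vectorisation I use the column-stacking convention described in words in the definition, so that for $B\in\mathbb{R}^{n\times p}$ with columns $b_1,\dots,b_p$ we have $\mathrm{vec}(B)=(b_1^T,\dots,b_p^T)^T$. This is the convention under which the second identity holds; under row stacking one would get $(A\otimes C^T)\mathrm{vec}(B)$ instead.

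\emph{Part 1.} Both sides of the claimed identity have the same block structure. The hypotheses mean $A,B\in\mathbb{R}^{m\times n}$ and $C,D\in\mathbb{R}^{p\times q}$. By definition, the $(i,j)$ block of $(A+B)\otimes(C+D)$ is $(a_{ij}+b_{ij})(C+D)$. Distributing scalar multiplication over matrix addition turns this into $a_{ij}C+a_{ij}D+b_{ij}C+b_{ij}D$. That sum is exactly the $(i,j)$ block of $A\otimes C+A\otimes D+B\otimes C+B\otimes D$. Since the blocks agree for every $(i,j)$, the matrices are equal.

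\emph{Part 2.} Let $A\in\mathbb{R}^{m\times n}$, $B\in\mathbb{R}^{n\times p}$ and $C\in\mathbb{R}^{p\times q}$. Then $\mathrm{vec}(ABC)\in\mathbb{R}^{mq}$ consists of $q$ blocks of length $m$, and the $l$-th block is the $l$-th column of $ABC$:
\begin{equation*}
ABCe_l=A B\Big(\sum_{k=1}^p c_{kl}e_k\Big)=\sum_{k=1}^p c_{kl}\,A b_k .
\end{equation*}

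Now compute the right-hand side. The matrix $C^T\otimes A$ has $q\times p$ blocks of size $m\times n$, and its $(l,k)$ block is $(C^T)_{lk}A=c_{kl}A$. Multiplying by $\mathrm{vec}(B)=(b_1^T,\dots,b_p^T)^T$ blockwise, the $l$-th block of $(C^T\otimes A)\mathrm{vec}(B)$ is $\sum_{k}c_{kl}Ab_k$. This matches the $l$-th block of $\mathrm{vec}(ABC)$ for every $l=1,\dots,q$, which proves the identity.

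There is no real obstacle here. The only point needing care is bookkeeping: keeping the block dimensions consistent and using one vectorisation convention throughout.
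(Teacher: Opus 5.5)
Your proof is correct, and the paper gives no proof of its own; it only cites Magnus \& Neudecker. The textbook argument there for Part 2 writes $B=\sum_k b_k e_k^T$, uses $\mathrm{vec}(ab^T)=b\otimes a$, and then applies the mixed-product rule $(C^T\otimes A)(e_k\otimes b_k)=(C^Te_k)\otimes(Ab_k)$. You instead compare the $l$-th column of $ABC$ directly with the $l$-th block of $(C^T\otimes A)\mathrm{vec}(B)$. This avoids the mixed-product rule entirely, so your proof is more elementary and self-contained, at the cost of explicit index bookkeeping. Part 1 is the same entrywise distributivity check in any treatment.

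Your remark about conventions is right and worth keeping. The paper's displayed formula $(a_{11},a_{12},\ldots,a_{1n},a_{21},\ldots,a_{mn})^T$ is actually row-major, which contradicts its verbal ``stack the columns'' definition. Under that ordering Part 2 fails, and one gets $(A\otimes C^T)\mathrm{vec}(B)$ instead, as you say. The column-stacking reading you adopt is the one consistent with how the paper uses $\mathrm{vec}$ afterwards, in Lemma~\ref{lem:vec_KP} and in the listing of $\mathrm{vec}(P)$ before Theorem~\ref{thm:MSlin_gen}.
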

\begin{lemma}\label{lem:vec_KP}
If $A\in\mathbb{R}^{m\times n}$ and $B\in\mathbb{R}^{n\times q}$, then 
\[
\mathrm{vec}(AB)=(B^T\otimes\mathbb{I}_d)\,\mathrm{vec}(A)=(\mathbb{I}_d\otimes A)\mathrm{vec}(B).
\]
Moreover,  if $x\in\mathbb{R}^d$ is a column vector, then $\mathrm{vec}(x)=x$ and $\mathrm{vec}(x^T)=x$.
\end{lemma}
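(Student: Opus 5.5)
The plan is to derive both identities directly from part 2 of Lemma~\ref{lem:KP_vec}, $\mathrm{vec}(ABC)=(C^T\otimes A)\mathrm{vec}(B)$, by inserting an identity matrix so that the two-factor product $AB$ becomes a three-factor product. The only care needed is in the sizes of the identity matrices. The symbol $\mathbb{I}_d$ in the statement should be read as the identity of the appropriate size: $\mathbb{I}_m$ in the first identity and $\mathbb{I}_q$ in the second. These coincide with $\mathbb{I}_d$ in the applications later in the paper, where all matrices are $d\times d$ or $d\times 1$.

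For the first identity, I write $AB=\mathbb{I}_m A B$, with $\mathbb{I}_m\in\mathbb{R}^{m\times m}$, so that the triple product $\mathbb{I}_m\,A\,B$ exists. Applying Lemma~\ref{lem:KP_vec}(2) with the roles $(A,B,C)\mapsto(\mathbb{I}_m,A,B)$ gives
\[
\mathrm{vec}(AB)=(B^T\otimes \mathbb{I}_m)\,\mathrm{vec}(A).
\]
For the second identity, I write $AB=A B\,\mathbb{I}_q$, with $\mathbb{I}_q\in\mathbb{R}^{q\times q}$. Applying the same lemma with $(A,B,C)\mapsto(A,B,\mathbb{I}_q)$ and using $\mathbb{I}_q^T=\mathbb{I}_q$ gives
\[
\mathrm{vec}(AB)=(\mathbb{I}_q\otimes A)\,\mathrm{vec}(B).
\]
A quick dimension check confirms consistency. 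The matrix $B^T\otimes\mathbb{I}_m$ is $mq\times mn$ and $\mathrm{vec}(A)\in\mathbb{R}^{mn}$. The matrix $\mathbb{I}_q\otimes A$ is $mq\times nq$ and $\mathrm{vec}(B)\in\mathbb{R}^{nq}$. In both cases the result lies in $\mathbb{R}^{mq}$, as does $\mathrm{vec}(AB)$.

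The statements about vectors follow directly from the definition of vectorisation as stacking columns.
\begin{itemize}
\item A column vector $x\in\mathbb{R}^{d}=\mathbb{R}^{d\times 1}$ has exactly one column, so stacking leaves it unchanged and $\mathrm{vec}(x)=x$.
\item The row vector $x^T\in\mathbb{R}^{1\times d}$ has $d$ columns, each a $1\times1$ block containing $x_j$. Stacking them in order gives $(x_1,\dots,x_d)^T=x$.
\end{itemize}

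I do not expect a genuine obstacle here. The only delicate point is bookkeeping: matching the column-stacking convention with the ordering in Lemma~\ref{lem:KP_vec}(2), and stating the identity sizes correctly.
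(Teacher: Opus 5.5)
Your proof is correct and is the paper's own argument: the paper just says the lemma is a special case of Lemma~\ref{lem:KP_vec}, Part 2, and you make this explicit via $AB=\mathbb{I}_m AB=AB\,\mathbb{I}_q$, with the vector claims following from the column-stacking definition as you say. Your correction of $\mathbb{I}_d$ to $\mathbb{I}_m$ and $\mathbb{I}_q$ is right, and so is your insistence on column stacking: the paper's displayed formula for $\mathrm{vec}(A)$ actually lists the entries row by row, and under that convention Part 2 would not hold as written.
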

\begin{proof}
The proof is a special case of Lemma \ref{lem:KP_vec}, Part 2.
\end{proof}

The following theorem generalises a result of Buckwar \&  Sickenberger~\cite{Buckwar2012}. We consider the solution $P$ of \eqref{eq:Pode} and note that 
\begin{multline*}
\mathrm{vec}(P)
\\=\left[\expect{\Yt_1^2},\expect{\Yt_2\Yt_1},\ldots,\expect{\Yt_d\Yt_1},\expect{\Yt_1\Yt_2},\expect{\Yt_2^2},\ldots,\expect{\Yt_d\Yt_2},\ldots,\expect{\Yt_d^2}\right]^T.
\end{multline*}
\begin{theorem}\label{thm:MSlin_gen}
Let $\Yt$ be the solution of \eqref{eq:sdeLinMult}, $M$ be as defined in Lemma \ref{lem:arnold} and let $P$ be the solution of \eqref{eq:Pode}. 
Construct a new vector-valued process $Q(t):=[\mathrm{vec}(P(t))^T,M(t)^T]^T$, $t\geq 0$, and set 
\begin{align*}
\tilde{A}&:=(\mathbb{I}_d\otimes A)+(A\otimes\mathbb{I}_d);
\\ \tilde{B}_i&:=(\Gamma_i\otimes B_i)+(B_i\otimes \Gamma_i);
\\ \tilde{C}_i&:=(\Gamma_i\otimes \mathbb{I}_d)\Gamma_i.
\end{align*}
Then $Q$ satisfies the linear ODE
\begin{equation}\label{eq:linearsystem}
Q'(t)=\Zmat Q(t)+S,
\end{equation}
where $\Zmat \in\mathbb{R}^{(d^2+d)\times(d^2+d)}$ is the block matrix  
\begin{equation}\label{eq:Zdefn}
\Zmat:= \begin{pmatrix}
    \tilde{A} +\sum_{i=1}^m B_i\otimes B_i& (\mathbb{I}_d\otimes \Lambda)+(\Lambda\otimes\mathbb{I}_d)+\sum_{i=1}^m\tilde{B}_i\\
    0 & A
\end{pmatrix},
\end{equation}
and $S\in\mathbb{R}^{d^2+d}$ is the stacked vector $\left[\left(\sum_{i=1}^m\tilde{C_i}\right)^T,\,\Lambda^T\right]^T$. 
\end{theorem}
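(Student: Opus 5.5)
The plan is to obtain \eqref{eq:linearsystem} by applying the vectorisation operator to the two ODEs of Lemma~\ref{lem:arnold}. Since $\mathrm{vec}$ is linear and commutes with differentiation in $t$, we have $\frac{d}{dt}\mathrm{vec}(P(t))=\mathrm{vec}(P'(t))$. It therefore suffices to write each term on the right-hand side of \eqref{eq:Pode} as a constant matrix times either $\mathrm{vec}(P)$ or $M$, plus a constant vector. The lower block row of \eqref{eq:linearsystem} is exactly Part~1 of Lemma~\ref{lem:arnold}, $M'=AM+\Lambda$. This accounts for the block $(0\;\;A)$ in $\Zmat$ and the entry $\Lambda$ in $S$.

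For the upper block row I would treat the terms of \eqref{eq:Pode} one at a time, using Lemma~\ref{lem:KP_vec}, Part~2, in the form $\mathrm{vec}(XYZ)=(Z^T\otimes X)\mathrm{vec}(Y)$, together with Lemma~\ref{lem:vec_KP}.

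First, the terms involving $P$ are handled by inserting an identity factor where needed:
\[
\mathrm{vec}(AP\,\mathbb{I}_d)=(\mathbb{I}_d\otimes A)\mathrm{vec}(P),\qquad \mathrm{vec}(\mathbb{I}_d PA^T)=(A\otimes \mathbb{I}_d)\mathrm{vec}(P),\qquad \mathrm{vec}(B_iPB_i^T)=(B_i\otimes B_i)\mathrm{vec}(P).
\]
Together these give $\tilde A+\sum_i B_i\otimes B_i$.

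Second, the terms linear in $M$ are rewritten by regarding $M$ and $M^T$ as the middle factor and using $\mathrm{vec}(M)=\mathrm{vec}(M^T)=M$:
\begin{align*}
\mathrm{vec}(\Lambda M^T\cdot 1)&=(\mathbb{I}_d\otimes\Lambda)M, & \mathrm{vec}(\mathbb{I}_d M\Lambda^T)&=(\Lambda\otimes\mathbb{I}_d)M,\\
\mathrm{vec}(B_iM\Gamma_i^T)&=(\Gamma_i\otimes B_i)M, & \mathrm{vec}(\Gamma_iM^TB_i^T)&=(B_i\otimes\Gamma_i)M.
\end{align*}
Here the scalar $1$ is treated as a $1\times1$ identity, so that $1\otimes\Lambda$ is read as $\mathbb{I}_d\otimes\Lambda$ acting correctly on the $d$-vector $M^T$; the dimensions must be checked carefully at this point. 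These terms produce the upper-right block $(\mathbb{I}_d\otimes\Lambda)+(\Lambda\otimes\mathbb{I}_d)+\sum_i\tilde B_i$.

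Third, the only constant term is $\mathrm{vec}(\Gamma_i\Gamma_i^T)=\mathrm{vec}(\mathbb{I}_d\Gamma_i\Gamma_i^T)=(\Gamma_i\otimes\mathbb{I}_d)\Gamma_i=\tilde C_i$, which gives the upper part of $S$. Stacking the two block rows then yields $Q'=\Zmat Q+S$.

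The main obstacle is bookkeeping rather than ideas. One must check that each Kronecker factor has the right shape ($d^2\times d^2$ for the terms acting on $\mathrm{vec}(P)$, $d^2\times d$ for those acting on $M$, and $d^2\times1$ for $\tilde C_i$). One must also handle the degenerate cases of Lemma~\ref{lem:KP_vec} in which a vector plays the role of the outer matrix. In addition, the vec ordering should be made consistent with the displayed listing of $\mathrm{vec}(P)$. Because $P$ is symmetric, row stacking and column stacking give the same vector, so either convention works. I would verify each identity entrywise in a small case, e.g.\ $d=2$, to guard against transposition slips, in particular in the order of $\Gamma_i\otimes B_i$ versus $B_i\otimes\Gamma_i$.
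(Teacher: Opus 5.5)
Your proposal is correct and follows the paper's proof essentially step for step. Like the paper, you vectorise \eqref{eq:Pode} term by term via $\mathrm{vec}(XYZ)=(Z^T\otimes X)\mathrm{vec}(Y)$ and stack the result with $M'=AM+\Lambda$ from Lemma~\ref{lem:arnold}.

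One small fix is needed in the $\Lambda M^T$ term. The padding factor there should be $\mathbb{I}_d$ on the right, i.e.\ $\mathrm{vec}(\Lambda M^T\mathbb{I}_d)=(\mathbb{I}_d\otimes\Lambda)\mathrm{vec}(M^T)=(\mathbb{I}_d\otimes\Lambda)M$. Padding with a scalar $1$ is dimensionally inconsistent and would give the $d\times 1$ matrix $1\otimes\Lambda=\Lambda$. The final formula you wrote is nevertheless correct.
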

\begin{remark}
Theorem \ref{thm:MSlin_gen} generalises naturally to the case where the coefficients $A,\Lambda,B_i,\Gamma_i$, for $i=1,\ldots,m$, depend on time. For notational simplicity we present the constant coefficient case here.
\end{remark}
\begin{proof}
Vectorise the matrices on both sides of \eqref{eq:Pode} term by term:
\[
\mathrm{vec}(AP(t))=(\mathbb{I}_d\otimes A)\,\mathrm{vec}(P(t));\quad \mathrm{vec}(P(t)A^T)=(A\otimes\mathbb{I}_d)\,\mathrm{vec}(P(t)),
\]
so that
\[
\mathrm{vec}(AP(t)+P(t)A^T)=((\mathbb{I}_d\otimes A)+(A\otimes\mathbb{I}_d))\,\mathrm{vec}(P(t)).
\]
Next,
\[
\mathrm{vec}\left(\sum_{i=1}^m B_i P(t) B_i^T\right)= \sum_{i=1}^{m}\mathrm{vec}(B_iP(t)B_i^T)=\sum_{i=1}^{m}(B_i\otimes B_i)\,\mathrm{vec}(P(t)).
\]
Then,
\begin{align*}
\mathrm{vec}\left(\Lambda M(t)^T+M(t)\Lambda^T\right)&=\left((\mathbb{I}_d\otimes \Lambda)+(\Lambda\otimes\mathbb{I}_d)\right)M(t);
\\ \mathrm{vec}\left(\sum_{i=1}^m B_i M(t)\Gamma_i^T\right)&=\sum_{i=1}^{m}\mathrm{vec}(B_iM(t)\Gamma_i^T)=\sum_{i=1}^{m}(\Gamma_i\otimes B_i)\,M(t),
\end{align*}
and
\[
\mathrm{vec}\left(\sum_{i=1}^m\Gamma_i M(t)^T B_i^T\right)=\sum_{i=1}^{m}\mathrm{vec}(\Gamma_i M(t)^TB_i^T)=\sum_{i=1}^{m}(B_i\otimes \Gamma_i)\,M(t).
\]
Finally, 
\[
\mathrm{vec}(\Gamma_i\Gamma_i^T)=(\Gamma_i\otimes \mathbb{I}_d)\Gamma_i.
\]
So we can write
\begin{multline*}
\mathrm{vec}(P'(t))=\tilde{A}\,\mathrm{vec}(P(t)) + \Big(\sum_{i=1}^{m} B_i\otimes B_i \Big) \mathrm{vec}(P(t)) \\ +\Big((\mathbb{I}_d\otimes \Lambda)+(\Lambda\otimes\mathbb{I}_d)+\sum_{i=1}^{m}\tilde{B}_i\Big)M(t)+\sum_{i=1}^{m}\tilde{C}_i.
\end{multline*}
The statement of the theorem follows from this representation and Part 1 of Lemma \ref{lem:arnold}.
\end{proof}
The eigenvalues of the matrix $\Zmat$ determine the linear mean-square stability of the equilibrium $\Xc$. Hence, applying the theorem to the linearisation of the centred SDE \eqref{eq:lin_gen} we obtain the following corollary.
\begin{corollary}\label{cor:linear-mean-square-stable}
Consider \eqref{eq:sdeLinMult} with the coefficients in \eqref{eq:linearization} and let the assumptions of Theorem \ref{thm:MSlin_gen} hold. Let $\Zmat$ be the matrix defined in \eqref{eq:Zdefn} and $\lambda_i$, $i=1,\ldots, d(d+1)$, be the eigenvalues of~$\Zmat$.
Then the equilibrium $\Xc$ is linearly mean-square stable in the sense of Definition~\ref{Def:linear-m-s-stability} if 
$$\mathrm{Re}(\lambda_i)<0 \qquad \text{for all} \quad i=1,\ldots,d(d+1).$$
\end{corollary}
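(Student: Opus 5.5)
By Theorem \ref{thm:MSlin_gen}, applied with the coefficients \eqref{eq:linearization}, the vector $Q(t)=[\mathrm{vec}(P(t))^T,M(t)^T]^T$ solves the constant-coefficient affine ODE $Q'=\Zmat Q+S$. Variation of constants gives
\[
Q(t)=e^{\Zmat t}Q(0)+\int_0^t e^{\Zmat(t-s)}S\,ds .
\]
If every eigenvalue of $\Zmat$ has negative real part, then $\Zmat$ is invertible, and the integral equals $\Zmat^{-1}(e^{\Zmat t}-\mathbb{I})S$. Hence
\[
Q(t)=e^{\Zmat t}\bigl(Q(0)+\Zmat^{-1}S\bigr)-\Zmat^{-1}S .
\]
Let $\mu:=\max_i\mathrm{Re}(\lambda_i)<0$. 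From the Jordan form there are constants $K\ge1$ and $\nu\in(\mu,0)$ with $\|e^{\Zmat t}\|\le Ke^{\nu t}$. Consequently $Q(t)\to Q_\infty:=-\Zmat^{-1}S$ exponentially fast, for every initial value $Q(0)$.

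Next I connect this to Definition~\ref{Def:linear-m-s-stability}. Since $\expect{\|\Yt(t)\|^2}=\mathrm{tr}\,P(t)$, it is the sum of the $d$ diagonal components of $\mathrm{vec}(P(t))$, which are particular entries of $Q(t)$. Therefore $\expect{\|\Yt(t)\|^2}\le \sqrt d\,\|Q(t)\|$, and
\[
\expect{\|\Yt(t)\|^2}\le \sqrt d\bigl(Ke^{\nu t}\|Q(0)+\Zmat^{-1}S\|+\|\Zmat^{-1}S\|\bigr).
\]
The limit $\BETA^2=\lim_{t\to\infty}\expect{\|\Yt(t)\|^2}$ exists and equals the trace of the matrix whose vectorisation is the first $d^2$ block of $Q_\infty$. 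It does not depend on $\Yt(0)$, and it is finite.

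For the bound in the definition, I take $\alpha^2:=\BETA^2+1$. Given $\Yt(0)$, I choose $t^*$ so that $\sqrt d\,Ke^{\nu t^*}\|Q(0)-Q_\infty\|\le 1$. Here $Q(0)=[\mathrm{vec}(\Yt(0)\Yt(0)^T)^T,\Yt(0)^T]^T$.

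The main obstacle is the quantifier order in Definition~\ref{Def:linear-m-s-stability}, where $t^*$ seems to be required uniformly in $\Yt(0)$. No $t^*$ can be uniform over all of $\mathbb{R}^d$, because $\|Q(0)\|$ grows like $\|\Yt(0)\|^2$. I will therefore read the definition with $t^*$ allowed to depend on $\Yt(0)$ and only $\alpha$ uniform, which the exponential decay delivers. A minor point is that the eigenvalues of $\Zmat$ are those of $A$ together with those of $\tilde A+\sum_i B_i\otimes B_i$, by the block-triangular structure. So the hypothesis also forces $M(t)\to0$, which is consistent with $\Lambda=0$ and $Q_\infty$ having zero mean block.
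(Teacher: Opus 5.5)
Your argument is correct and is essentially the paper's route: the paper gives no separate proof. It relies instead on the explicit solution $Q(t)=e^{t\Zmat}Q(0)-\Zmat^{-1}(I-e^{t\Zmat})S$ and the limit $Q_\infty=-\Zmat^{-1}S$ recorded in Remark~\ref{rem:Qinfty}, reading $\BETA^2$ off the trace part of $Q_\infty$, just as you do. Your point that $t^*$ must be allowed to depend on $\Yt(0)$ is a correct and necessary reading of Definition~\ref{Def:linear-m-s-stability}, which the paper leaves implicit; note also that to obtain exactly $\alpha^2=\BETA^2+1$ you should apply the trace functional to $Q(t)-Q_\infty=e^{\Zmat t}(Q(0)-Q_\infty)$, since your triangle-inequality bound only gives $\alpha^2=1+\sqrt d\,\|\Zmat^{-1}S\|$ (which is also enough).
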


\begin{remark}\label{rem:Qinfty}
The linear ODE \eqref{eq:linearsystem} has the solution 
$$Q(t) = e^{t\Zmat} Q(0) - \Zmat^{-1}(I-e^{t\Zmat})S.$$
We define $Q_\infty=\lim_{t\to\infty} Q(t) = -\Zmat^{-1}S.$
From this limit  $Q_\infty$ we can extract 
$$
\BETA^2=\lim_{t\to \infty}\expect{\|\Yt(t)\|^2}\leq \|\Zmat^{-1}S\|^2,
$$
where $\BETA$ is as given in Definition~\ref{Def:linear-m-s-stability}.     
We also note that for our numerical experiments in Section \ref{sec:numerics} we remove redundant equations from the linear system $\Zmat Q_\infty = -S$ in order to reduce the complexity of computations.
\end{remark}

\section{Relating the linear and nonlinear dynamics.}
\label{sec:MS_lin_nonlin}
We now examine under what conditions we can use the linearised stochastic system \eqref{eq:lin_gen} to study the mean-square dynamics of the
nonlinear SDE \eqref{eq:main_d} around a deterministic equilibrium $\Xc$. Before we investigate the mean-square dynamics of the linearisation error $Z$ given in \eqref{eq:Zdiff} we recall that 
a function $k:[0,\infty)\to\mathbb{R}$ is of subexponential growth if
$$
\lim_{t\to\infty}k(t)e^{-\kappa t}=0\qquad \text{for all } \kappa>0.
$$
We consider the particular case where $\mathbf{D}F(\Xc)$ is diagonalizable, i.e.
\begin{equation}\label{eq:diagonalize}
    \mathbf{D}F(\Xc)=\P\D\P^{-1},
\end{equation}
where $\D$ is the diagonal matrix with the eigenvalues of $ \mathbf{D}F(\Xc)$ and the eigenvectors are normalized so that $\|\P\|_2=\|\P^{-1}\|_2=1$.

\begin{theorem}\label{thm:Z2L2bound2}
Let Assumption~\ref{assum:FG_bounds} and \eqref{eq:diagonalize} hold, $Y$ be the solution of the centered SDE~\eqref{eq:main_YSDE} and $Z$ be the linearization error defined in \eqref{eq:Zdiff}. Suppose there exists a non-decreasing
 function $K:[0,\infty)\to [0,\infty)$ such that
\begin{equation}\label{eq:RFRG_bound}
\expect{\|R_F(Y(t),\Xc)\|^2} \bigvee \expect{\|\RG(Y(t),\Xc)\|_{\mathbf{F}}^2} \leq K(t)  \qquad \text{for all } t\geq 0.
\end{equation}
 Then for any $\delta_1,\delta_2>0$ we have
\begin{equation*}\label{eq:Z2L2bound}
 \expect{\|Z(t)\|^2}\leq 
 \left[\left(1+\delta_1^{-1}+\delta_2^{-1}\right) K(t)t +\|Z(0)\|^2\right]
e^{\MU t}\qquad \text{for all } t\geq 0,
\end{equation*}
where
\begin{equation}
\label{eq:MU}
\MU:=2\lambda_{\max}\{\mathbf{D}F(\Xc)\} + \delta_1 + (1+\delta_2)\|\mathbf{D}G(\Xc)\|_{T_3}^2.  
\end{equation}
In particular, if $K$ is of subexponential growth and $\MU<0$ 
then $\lim_{t\to\infty} \expect{\|Z(t)\|^2}=0.$
\end{theorem}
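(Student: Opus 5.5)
Subtracting the linearisation \eqref{eq:lin_gen} from the centred nonlinear equation \eqref{eq:main_Y} gives the SDE satisfied by the linearisation error $Z=Y-\Yt$. Writing $A:=\mathbf{D}F(\Xc)$ and $B_i:=\mathbf{D}G_i(\Xc)$, we have
\begin{equation*}
dZ(t)=\big(AZ(t)+R_F(Y(t),\Xc)\big)dt+\sum_{i=1}^m\big(B_iZ(t)+R_{G_i}(Y(t),\Xc)\big)dW_i(t).
\end{equation*}
The additive term $G(\Xc)$ cancels. This is the key structural point: $Z$ solves a linear SDE with purely multiplicative noise, forced by the remainders. The plan is to apply \Ito's formula to a weighted squared norm of $Z$, take expectations so that the martingale part vanishes, and close with Gronwall's lemma. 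Since the remainders only enter through the bound \eqref{eq:RFRG_bound}, the growth of $Y$ plays no role beyond $K$.

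Concretely, I would use the diagonalisation \eqref{eq:diagonalize} and work with $V:=\P^{-1}Z$, or equivalently the quadratic form $\|\P^{-1}Z\|^2$. The normalisation $\|\P\|_2=\|\P^{-1}\|_2=1$ gives $\|V\|=\|Z\|$ up to these unit factors. It also reduces the drift contribution $2\langle V,\D V\rangle$ to at most $2\lambda_{\max}\{\mathbf{D}F(\Xc)\}\|V\|^2$, interpreting $\lambda_{\max}$ as the largest real part. \Ito's formula then gives
\begin{equation*}
\frac{d}{dt}\expect{\|Z\|^2}\le \expect{2\langle Z,AZ\rangle+2\langle Z,R_F\rangle+\sum_{i}\|B_iZ+R_{G_i}\|^2}.
\end{equation*}
I would bound the three pieces in turn.
\begin{itemize}
\item \textbf{Drift:} $2\langle Z,AZ\rangle\le 2\lambda_{\max}\|Z\|^2$, via the change of variables above.
\item \textbf{Drift remainder:} Young's inequality with parameter $\delta_1$ gives $2\langle Z,R_F\rangle\le\delta_1\|Z\|^2+\delta_1^{-1}\|R_F\|^2$.
\item \textbf{Diffusion:} the elementary bound $\|a+b\|^2\le(1+\delta_2)\|a\|^2+(1+\delta_2^{-1})\|b\|^2$ gives $\sum_i\|B_iZ+R_{G_i}\|^2\le(1+\delta_2)\|\mathbf{D}G(\Xc)\|_{T_3}^2\|Z\|^2+(1+\delta_2^{-1})\|\RG\|_{\mathbf F}^2$.
\end{itemize}

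Collecting terms and using \eqref{eq:RFRG_bound} yields
\begin{equation*}
\frac{d}{dt}\expect{\|Z(t)\|^2}\le \MU\,\expect{\|Z(t)\|^2}+(1+\delta_1^{-1}+\delta_2^{-1})K(t),
\end{equation*}
with $\MU$ as in \eqref{eq:MU}. Here I use the coefficient sum $\delta_1^{-1}+1+\delta_2^{-1}$ on $K$. Variation of constants then gives
\begin{equation*}
\expect{\|Z(t)\|^2}\le e^{\MU t}\|Z(0)\|^2+(1+\delta_1^{-1}+\delta_2^{-1})\int_0^t e^{\MU(t-s)}K(s)\,ds.
\end{equation*}
To reach the stated form I would bound $K(s)\le K(t)$, using that $K$ is non-decreasing, and $e^{\MU(t-s)}\le e^{\MU t}\cdot e^{-\MU s}$. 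When $\MU\ge0$ this needs care, so I would instead run the Gronwall argument on $e^{-\MU t}\expect{\|Z\|^2}$ directly, or simply accept the cruder bound $\int_0^te^{\MU(t-s)}ds\le t\,e^{\MU t}$, which holds for $\MU\ge0$.

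The main obstacle I expect is twofold. First, making the $\MU<0$ case consistent with the claimed bound $t\,e^{\MU t}$ rather than $t$: for $\MU<0$, $\int_0^te^{\MU(t-s)}ds\le t$ but not $\le te^{\MU t}$. I would try to handle this by applying Gronwall in integral form to $\expect{\|Z\|^2}$ with the forcing term bounded by $K(t)t$ as a constant on $[0,t]$, which gives exactly $[\,\cdots K(t)t+\|Z(0)\|^2]e^{\MU t}$. Second, justifying the change of variables with $\P$ in the diffusion term. For the final claim, if $\MU<0$ and $K$ grows subexponentially, then $K(t)\,t\,e^{\MU t}\to0$ and $\|Z(0)\|^2e^{\MU t}\to0$. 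Note that $Z(0)=0$ anyway, since $\Yt(0)=Y_0$.
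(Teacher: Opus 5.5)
\textbf{There is a genuine gap, and it is the step you flagged and then patched.} Your route is the paper's route. The paper also passes to $z=\P^{-1}Z$, applies \Ito's formula, and splits the drift, drift-remainder and diffusion terms with the same $\delta_1$, $\delta_2$. It reaches
\[
\expect{\|z(t)\|^2}\le\|z(0)\|^2+\MU\int_0^t\expect{\|z(s)\|^2}\,ds+\left(1+\delta_1^{-1}+\delta_2^{-1}\right)K(t)\,t ,
\]
and then invokes ``Gronwall's inequality''. Your worry about $\P$ in the diffusion term is unnecessary: $\|\P\|_2=\|\P^{-1}\|_2=1$ forces $\P$ to be an isometry, so you can work with $\|Z\|^2$ directly.

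The integral Gronwall lemma, $u(t)\le a(t)+\MU\int_0^tu(s)\,ds\Rightarrow u(t)\le a(t)e^{\MU t}$ for non-decreasing $a$, requires $\MU\ge0$. For $\MU<0$ it fails. For example, $u(t)=c(1-e^{\MU t})/|\MU|$ satisfies $u(t)=ct+\MU\int_0^tu(s)\,ds$, yet $u(t)\to c/|\MU|$ while $ct\,e^{\MU t}\to0$. Treating $K(t)t$ as part of the initial datum and letting it decay like $e^{\MU t}$ is exactly this invalid step. It is also the step the paper's proof relies on.

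Forcing injected at times $s$ close to $t$ has had no time to decay. What your computation actually proves is your variation-of-constants formula. For $\MU\ge0$ this does give the stated bound, via $\int_0^te^{\MU(t-s)}ds\le te^{\MU t}$. For $\MU<0$ it only gives
\[
\expect{\|Z(t)\|^2}\le e^{\MU t}\|Z(0)\|^2+\left(1+\delta_1^{-1}+\delta_2^{-1}\right)\frac{1-e^{\MU t}}{|\MU|}\,K(t),
\]
which saturates at level of order $K/|\MU|$ rather than vanishing.

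\textbf{No other argument can close the case $\MU<0$, because there the statement (and its `in particular' clause) is false.} Take $d=m=1$, $F(x)=-x-x^3$, $G\equiv\sigma>0$, $\Xc=0$. Then:
\begin{itemize}
\item Assumption~\ref{assum:FG_bounds} holds, $\P=1$ and $\mathbf{D}G\equiv0$, so $\MU=-2+\delta_1<0$ for $\delta_1<2$.
\item $R_F(y,0)=-y^3$ and $\RG\equiv0$.
\item The system is dissipative, so $\sup_{t\ge0}\expect{Y(t)^6}<\infty$ and $K$ can be taken constant.
\item $Z(0)=0$.
\end{itemize}
The theorem would then give $\expect{Z(t)^2}\to0$. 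By the triangle inequality in $L^2(\Omega)$ this forces $\expect{Y(t)^2}\to\lim_{t\to\infty}\expect{\Yt(t)^2}=\sigma^2/2$, the Ornstein--Uhlenbeck variance. But \Ito's formula gives
\[
\frac{d}{dt}\expect{Y(t)^2}=\sigma^2-2\expect{Y(t)^2}-2\expect{Y(t)^4}\le\sigma^2-2\expect{Y(t)^2}-2\big(\expect{Y(t)^2}\big)^2 .
\]
Its right-hand side would tend to $-\sigma^4/2<0$, which is impossible for a nonnegative function.

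So for $\MU<0$ the correct conclusion is the convolution bound above. For sharpness you can keep the actual remainder moments inside the integral. It gives $\limsup_{t\to\infty}\expect{\|Z(t)\|^2}\le(1+\delta_1^{-1}+\delta_2^{-1})\sup_tK(t)/|\MU|$. You get $\expect{\|Z(t)\|^2}\to0$ only when the remainder moments themselves tend to zero, and a non-decreasing $K$ cannot encode that unless $K\equiv0$.
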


\begin{proof} 
By the definition of $Z$ in \eqref{eq:Zdiff} we have 
\begin{multline*}
\P^{-1}Z(t)=\P^{-1}Z(0) + \int_{0}^{t} \left(\P^{-1}\mathbf{D}F(\Xc)Z(s)+\P^{-1}R_F(Y(s),\Xc) \right)ds
\\+\int_{0}^{t}\left(\P^{-1}\left[\mathbf{D}G_1(\Xc)Z(s),\ldots,\mathbf{D}G_m(\Xc)Z(s)\right]+\P^{-1}\RG(Y(s),\Xc)\right)dW(s),\quad t\geq 0,
\end{multline*}
and so $z(t)=\P^{-1}Z(t)$ satisfies
\begin{multline*}
z(t)=z(0) + \int_{0}^{t} \left( D z(s)+\P^{-1}R_F(Y(s),\Xc) \right) ds
\\+\int_{0}^{t}\left(\P^{-1}\left[\mathbf{D}G_1(\Xc)\P z(s),\ldots,\mathbf{D}G_m(\Xc)\P z(s)\right]+\P^{-1}\RG(Y(s),\Xc)\right)dW(s),\quad t\geq 0.
\end{multline*}
Applying \Ito's formula and taking expectation we get
\begin{multline}\label{eq:post_Ito_Z}
\expect{\|z(t)\|^2}=\|z(0)\|^2\\
+\int_{0}^{t}\underbrace{\expect{2\langle z(s), D z(s)\rangle}}_{=:I}+\underbrace{\expect{2\langle z(s),\P^{-1}R_F(Y(s),\Xc)\rangle }}_{=:II}+\underbrace{\expect{\text{Tr}\left(g(s)^Tg(s)\right)}}_{=:III}ds,
\end{multline}
where 
\[
g(s)=\underbrace{\P^{-1}\left[\mathbf{D}G_1(\Xc)\P z(s),\ldots,\mathbf{D}G_m(\Xc)\P z(s)\right]}_{g_1(s)}+\underbrace{\P^{-1}\RG(Y(s),\Xc)}_{g_2(s)},\quad s\in[0,t].
\]
We estimate each of the terms $I$, $II$, and $III$ in \eqref{eq:post_Ito_Z} in turn. The first is straightforward as $D$ is a diagonal matrix,
\begin{align*}
\expect{2\langle z(s), Dz(s) \rangle} &\leq 2 \expect{\lambda_{\text{max}}\left\{D\right\}\|z(s)\|^2}.
\end{align*} 
For $II$, an application of the Cauchy-Schwarz inequality followed by the variant of Young's inequality $ab\leq a^2/\left(2\delta\right)+\delta b^2/2$, leads to
\begin{align*}
\langle z(s), \P^{-1}R_F(Y(s),\Xc) \rangle 
&\leq \left\|z(s)\right\|\left\|\P^{-1}R_F(Y(s),\Xc)\right\|\\
&\leq \frac{1}{2\delta_1}\left\|\P^{-1}\right\|_2^2\left\|R_F(Y(s),\Xc)\right\|^2+\frac{\delta_1}{2}\left\|z(s)\right\|^2\\
& =  \frac{1}{2\delta_1}\left\|R_F(Y(s),\Xc)\right\|^2+\frac{\delta_1}{2}\left\|z(s)\right\|^2,
\end{align*}
for any $\delta_1>0$. Taking expectations on both sides and using that $K$ is non-decreasing yields 
\[
\expect{2\left\langle z(s), \P^{-1}R_F(Y(s),\Xc)\right\rangle}\leq \delta_1^{-1}K(t) +\delta_1 \expect{\left\|z(s)\right\|^2},\quad s\in[0,t].
\]
Finally for $III$, we can use  $g(s) = g_1(s)+g_2(s)$ to estimate
\begin{align*}\label{eq:TrgTg}
\text{Tr}(g^T(s)g(s))& =  \left\|g(s)\right\|^2_{\mathbf{F}} = \left\|g_1(s) +g_2(s)\right\|^2_{\mathbf{F}}
 \leq \left\|g_1(s)\right\|^2_{\mathbf{F}} +\|g_2(s)\|^2_{\mathbf{F}} + 2\|g_1(s)\|_{\mathbf{F}}\|g_2(s)\|_{\mathbf{F}}\\ 
& \leq (1+\delta_2)\|g_1(s)\|^2_{\mathbf{F}} + (1+\delta_2^{-1})\|g_2(s)\|^2_{\mathbf{F}} , \numberthis
\end{align*}
for any $\delta_2>0$. For $g_2$ we obtain, using again that $K$ is non-decreasing,
\begin{align*}\label{eq:g2bound}
\expect{\|g_2(s)\|_{\mathbf{F}}^2} & = \expect{\|\P^{-1}\RG(Y(s),\Xc)\|^2_{\mathbf{F}}}\leq \expect{\|\P^{-1}\|_2^2\|\RG(Y(s),\Xc)\|_{\mathbf{F}}^2 }  \leq K(t),
\end{align*}
and for $g_1(s)=\P^{-1}\left[\mathbf{D}G_1(\Xc)\P z(s),\ldots,\mathbf{D}G_m(\Xc)\P z(s)\right]$ we have
$$
\|g_1(s)\|_{\mathbf{F}}^2 \leq \|\P^{-1}\|_2^2\|\left[\mathbf{D}G_1(\Xc)\P z(s),\ldots,\mathbf{D}G_m(\Xc)\P z(s)\right]\|_{\mathbf{F}}^2.
$$
Let us examine the right-hand side above and in particular the term 
\begin{align*}
& \|\left[\mathbf{D}G_1(\Xc)\P z(s), \ldots,\mathbf{D}G_m(\Xc)\P z(s)\right]\|^2_{\mathbf F}  =\sum_{a=1}^d\sum_{b=1}^{m}\Big(\sum_{j=1}^d [\mathbf{D}G_b(\Xc)]_{a,j}(\P z(s))_j\Big)^2
\\
&\quad \leq  \sum_{a=1}^d\sum_{b=1}^{m}  \sum_{j=1}^d [\mathbf{D}G_b(\Xc)]_{a,j}^2 \sum_{j=1}^d (\P z(s))_j^2 \leq \|\mathbf{D}G(\Xc)\|_{T_3}^2\|\P z(s)\|^2
=  \|\mathbf{D}G(\Xc)\|_{T_3}^2\|z(s)\|^2.
\end{align*}
Taking expectations on both sides of \eqref{eq:TrgTg} and using these estimates leads to
\begin{align*}
\expect{\text{Tr}\left(g(s)^Tg(s)\right)} 
& \leq  (1+\delta_2)\|\mathbf{D}G(\Xc)\|_{T_3}^2\expect{\|z(s)\|^2}+(1+\delta_2^{-1})K(t).
\end{align*}
Returning to~\eqref{eq:post_Ito_Z} yields
\begin{align*}
\expect{\|z(t)\|^2}  \leq & \|z(0)\|^2 + \int_0^t \left(2\lambda_{\max}\{\mathbf{D}F(\Xc)\} + \delta_1 + (1+\delta_2)\|\mathbf{D}G(\Xc)\|_{T_3}^2\right) \expect{\|z(s)\|^2} ds \\
& + K(t) \left(1+\delta_1^{-1}+\delta_2^{-1}\right) t.
\end{align*}
An application of Gronwall's inequality implies the first result. 
Finally, note that if $K$ is of subexponential growth then also the function $tK(t)$ is of subexponential growth.  
\end{proof}
When $\MU<0$ we have exponential convergence of the expected linear dynamics to the non-linear dynamics. We now apply the Markov inequality to get some pathwise information.
\begin{corollary}
\label{cor:nonliear-stability-path}
Let the assumptions of Theorem \ref{thm:Z2L2bound2} hold, suppose that $K$ grows subexponentially and $\boldsymbol{\mu}<0$. Given $\rho>0$, $\varepsilon>0$, $\delta>0$ there exists $t^*>0$ such that
$$\prob{\|Z(t)\|^2 > \rho}<\varepsilon \qquad \text{for all } t>t^*,$$
and 
$$\prob{\|Y(t)\|^2 > \rho}\leq \frac{2\BETA^2+\delta}{\rho} + 2 \frac{\varepsilon}{\rho} \qquad \text{for all } t>t^*.$$
\end{corollary}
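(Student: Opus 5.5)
We combine Theorem~\ref{thm:Z2L2bound2} with the Markov inequality. Since $K$ is of subexponential growth and $\MU<0$, Theorem~\ref{thm:Z2L2bound2} gives $\lim_{t\to\infty}\expect{\|Z(t)\|^2}=0$. Given $\rho,\varepsilon>0$, we therefore choose $t_1>0$ such that $\expect{\|Z(t)\|^2}<\varepsilon$ for all $t>t_1$; shrinking $\varepsilon$ to $\varepsilon\rho$ if necessary, we may also ensure $\expect{\|Z(t)\|^2}<\varepsilon\rho$. Markov's inequality then gives
\[
\prob{\|Z(t)\|^2>\rho}\le \expect{\|Z(t)\|^2}/\rho<\varepsilon ,
\]
which is the first claim. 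Note that we only need $\expect{\|Z(t)\|^2}\le\varepsilon\rho$ here, but requiring $\expect{\|Z(t)\|^2}\le\varepsilon$ as well costs nothing and is used below.

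For the second claim we write $Y=\Yt+Z$ pathwise, which is just the definition \eqref{eq:Zdiff}. The elementary inequality $\|a+b\|^2\le 2\|a\|^2+2\|b\|^2$ gives
\[
\expect{\|Y(t)\|^2}\le 2\expect{\|\Yt(t)\|^2}+2\expect{\|Z(t)\|^2}.
\]
Because $\MU<0$ is assumed, Theorem~\ref{thm:Z2L2bound2} is applicable, and the statement implicitly presupposes linear mean-square stability so that $\BETA$ in \eqref{eq:BETA} is defined. By the definition of $\BETA$ as the limit, there is a $t_2$ such that $2\expect{\|\Yt(t)\|^2}\le 2\BETA^2+\delta$ for $t>t_2$. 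With $t^*:=t_1\vee t_2$ and $\expect{\|Z(t)\|^2}\le\varepsilon$ for $t>t^*$, Markov's inequality yields
\[
\prob{\|Y(t)\|^2>\rho}\le \frac{\expect{\|Y(t)\|^2}}{\rho}\le\frac{2\BETA^2+\delta}{\rho}+\frac{2\varepsilon}{\rho},
\]
as stated.

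The argument is routine. The only point needing care is bookkeeping. The same $\varepsilon$ is used in two roles: as a probability bound for $Z$ (which needs $\expect{\|Z\|^2}\le\varepsilon\rho$) and as a moment bound entering the $Y$ estimate (which needs $\expect{\|Z\|^2}\le\varepsilon$). Choosing $t_1$ so that $\expect{\|Z(t)\|^2}\le\varepsilon\min\{1,\rho\}$ handles both. The other ingredient is the existence of the limit $\BETA$, which we take from the linear mean-square stability of $\Xc$, for example via Corollary~\ref{cor:linear-mean-square-stable} and Remark~\ref{rem:Qinfty}.
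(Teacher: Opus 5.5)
Your proof is correct and follows the same route as the paper: Markov's inequality applied to $Z$ together with the decay $\expect{\|Z(t)\|^2}\to 0$ from Theorem~\ref{thm:Z2L2bound2}, then $\|Y\|^2\le 2\|\Yt\|^2+2\|Z\|^2$, Markov again, and the definition of $\BETA$ as a limit, whose existence the paper also takes for granted. Your bookkeeping is more careful than the paper's, which glosses over both points: you choose $t^*$ so that $\expect{\|Z(t)\|^2}\le\varepsilon\min\{1,\rho\}$, and also large enough that $\expect{\|\Yt(t)\|^2}\le\BETA^2+\delta/2$.
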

\begin{proof}
Applying the Markov inequality we obtain
$$
\mathbb{P}(\|Z(t)\|^2 > \rho) \leq \frac{\expect{\|Z(t)\|^2}}{\rho} = \frac{1}{\rho}\left[K(t)\Big(1+\frac{1}{\delta_1}+\frac{1}{\delta_2}\Big) t +\|Z(0)\|^2\right] e^{\MU t}.
$$  
Since $\MU<0$ and the function $K(t)t$ is of subexponential growth there exists $t^\ast$ such that $\mathbb{P}(\|Z(t)\|^2 > \rho)<\varepsilon$
for $t \geq t^\ast$.

The second statement also follows by an application of Markov's inequality and then the triangle inequality
$$
\begin{aligned}
\prob{\|Y(t)\|^2 > \rho}\leq\frac{\expect{\|Y(t)\|^2}}{\rho}\leq\frac{2\expect{\|Z(t)\|^2}}{\rho}+\frac{2\expect{\|\tilde{Y}(t)\|^2}}{\rho}.
\end{aligned}
$$
The result follows as for the first inequality and using the definition of $\BETA$ in \eqref{eq:BETA}.
\end{proof}

Using the mean-square dissipativity we can bound the moments in \eqref{eq:RFRG_bound} in Theorem~\ref{thm:Z2L2bound2} by constants.
Note that moment bounds for the solution  $X$ of the nonlinear SDE \eqref{eq:main_d} immediately give bounds for the solution $Y=X-\Xc$ of the centered SDE via the triangle inequality. 

\begin{lemma}\label{lem:RFGbound}
Suppose Assumptions~\ref{assum:FG_bounds} and \ref{ass:dissipative} hold and let $Y$ be the solution of \eqref{eq:main_YSDE}.
Then there exists a positive constant $K_{\Xc}>0$, which depends on $\Xc$, such that 
\begin{equation*}
\expect{\left\|R_F(Y(t),\Xc)\right\|^2}\leq K_{\Xc} \left(1+R_{q_1+2}+\|X_0\|^{2(q_1+2)}\right),
\end{equation*}
and
\begin{equation*}
\expect{\left\|\RG(Y(t),\Xc)\right\|_{\mathbf F}^2} \leq K_{\Xc} \left(1+R_{q_2+2}+\|X_0\|^{2(q_2+2)}\right), 
\end{equation*}
where $q_1$ and $q_2$ are the exponents in Assumption~\ref{assum:FG_bounds} and $R_q$ is defined in Lemma \ref{lem:dissipativemomentbound}.
\end{lemma}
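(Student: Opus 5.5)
The plan is to start from the integral form \eqref{eq:RB} of the Taylor remainder and bound it pointwise by a polynomial in $\|y\|$, then take expectations and use the uniform-in-time moment bound of Lemma~\ref{lem:dissipativemomentbound}.

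\textbf{Pointwise bound.} For $B=F$, Assumption~\ref{assum:FG_bounds}\,(3) gives
\[
\|R_F(y,\Xc)\|\le \int_0^1(1-\tau)\|\mathbf{D}^2F(\Xc+\tau y)\|_{T_3}\|y\|^2\,d\tau \le \tfrac12 c_1\bigl(1+(\|\Xc\|+\|y\|)^{q_1}\bigr)\|y\|^2 .
\]
Using $(a+b)^{q}\le 2^{(q-1)\vee 0}(a^q+b^q)$, this becomes
\[
\|R_F(y,\Xc)\|^2\le C\bigl(1+\|\Xc\|^{2q_1}\bigr)\|y\|^4 + C\|y\|^{2q_1+4}.
\]
Since $\|y\|^4\le 1+\|y\|^{2(q_1+2)}$, we get $\|R_F(y,\Xc)\|^2\le C_{\Xc}\bigl(1+\|y\|^{2(q_1+2)}\bigr)$. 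The same argument applies columnwise to $G_i$, and summing over $i=1,\dots,m$ in the Frobenius norm gives $\|\RG(y,\Xc)\|_{\mathbf F}^2\le m\,C_{\Xc}\bigl(1+\|y\|^{2(q_2+2)}\bigr)$.

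\textbf{Passing to $X$.} Set $y=Y(t)=X(t)-\Xc$. Then
\[
\|Y(t)\|^{2q}\le 2^{2q-1}\bigl(\|X(t)\|^{2q}+\|\Xc\|^{2q}\bigr),
\]
so after absorbing $\Xc$-dependent constants into $K_{\Xc}$ it suffices to bound $\expect{\|X(t)\|^{2q}}$ with $q=q_j+2>1$.

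\textbf{Moment bound.} Lemma~\ref{lem:dissipativemomentbound} with $q=q_j+2>1$ gives
\[
\expect{\|X(t)\|^{2q}}\le \|X_0\|^{2q}e^{-q\alpha_2t}+R_q\le \|X_0\|^{2q}+R_q
\]
uniformly in $t\ge0$. Combining the three steps yields both claimed estimates, with $K_{\Xc}$ depending on $c_{1,2}$, $q_{1,2}$, $m$ and $\|\Xc\|$.

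The only real obstacle is bookkeeping. The remainder has the $\|y\|^2$ factor, so it is genuinely of order $\|y\|^{q+2}$ and the required moment is $2(q+2)$; one has to check that Lemma~\ref{lem:dissipativemomentbound} is applied with the right index and that $q>1$ holds so the $R_q$ formula is the $q>1$ branch. The lemma as stated is for $X$ started at deterministic $X_0$, which matches the statement here. Finiteness of expectations is not an issue, since Lemma~\ref{lem:dissipativemomentbound} already guarantees it.
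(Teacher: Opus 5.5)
Your proposal is correct and follows essentially the same route as the paper: a pointwise bound of the integral remainder by $C_{\Xc}(1+\|y\|^{2(q_j+2)})$, then $\|Y\|^{2q}\le 2^{2q-1}(\|X\|^{2q}+\|\Xc\|^{2q})$, then Lemma~\ref{lem:dissipativemomentbound} in its $q>1$ branch with $q=q_j+2$, dropping the decaying exponential. The only difference is that you bound $\|R_F\|$ by the triangle inequality on the integral, where the paper bounds $\|R_F\|^2$ by a Jensen-type inequality; this is immaterial. One point worth making explicit, which the paper also leaves implicit, is that Lemma~\ref{lem:dissipativemomentbound} with $q=q_j+2$ needs Assumption~\ref{ass:dissipative} to hold with $p\ge 2(q_j+2)$, because its proof requires $\frac{2q-1}{2}\le\frac{p-1}{2}$.
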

\begin{proof}
We can estimate the remainder term $R_F$ as
\begin{align*}
    \|R_F(Y(t),\Xc)\|^2&\leq \int_{0}^{1}\left\|(1-\tau)Y(t)^T\mathbf{D}^2F(\Xc+\tau Y(t))Y(t)\right\|^2d\tau
    \\&\leq \int_{0}^{1}(1-\tau)^2\left\|\mathbf{D}^2F(\Xc+\tau Y(t))\right\|^2_{T_3}\left\|Y(t)\right\|^4d\tau.
\end{align*}
Making use of condition \eqref{eq:D^2f+D^2g} in Assumption \ref{assum:FG_bounds} and using Young's inequality there exist positive constants $K_{\Xc}', K_{\Xc}''>0$ depending on $\Xc$ such that
\begin{align*}
    \|R_F(Y(t),\Xc)\|^2&\leq \int_{0}^{1}(1-\tau)^2c_1^2\left(1+\left\|\Xc+\tau Y(t)\right\|^{q_1}\right)^2\left\|Y(t)\right\|^4d\tau\\
    &\leq \int_{0}^{1}(1-\tau)^2K_{\Xc}'\left(1+\|Y(t)\|^{2q_1}\right)\left\|Y(t)\right\|^4d\tau\\
    &
    \leq K_{\Xc}''\left(1+\|Y(t)\|^{2q_1+4}\right). \numberthis \label{eq:hatRFpoly}
\end{align*}
Similarly we obtain for some positive constant $K_{\Xc}'''>0$ 
\begin{equation}\label{eq:hatRGpoly}
    \|R_G(Y(t),\Xc)\|_{\mathbf F}^2\leq K_{\Xc}'''\left(1+\|Y(t)\|^{2q_2+4}\right).
\end{equation}
Taking expected values on both sides of \eqref{eq:hatRFpoly} and \eqref{eq:hatRGpoly} we obtain
\begin{align*}
\expect{\|R_F(Y(t),\Xc)\|^2}&\leq K^0_{\Xc}\left(1+\expect{\|Y(t)\|^{2q_1+4}}\right),
\\ \expect{\|\RG(Y(t),\Xc)\|_{\mathbf F}^2}&\leq K^0_{\Xc}\left(1+\expect{\|Y(t)\|^{2q_2+4}}\right),
\end{align*}
for some $K^0_{\Xc}>0$.
Finally, we apply the moment bound from Lemma \ref{lem:dissipativemomentbound}, with the estimate 
$$
\expect{\|Y(t)\|^p}\leq 2^{p-1}\expect{\|X(t)\|^p}+2^{p-1}\|\Xc\|^p,
$$
to get, after absorbing the term depending on $\Xc$ into the constant $K_{\Xc}$, 
\begin{align*}
\expect{\left\|R_F(Y(t),\Xc)\right\|^2}
&\leq K_{\Xc} \left(1+R_{q_1+2}+\|X_0\|^{2(q_1+2)} e^{-(q_1+2)\alpha_2 t}\right),\\
\expect{\left\|\RG(Y(t),\Xc)\right\|_{\mathbf F}^2}&\leq K_{\Xc} \left(1+R_{q_2+2}+\|X_0\|^{2(q_2+2)} e^{-(q_2+2)\alpha_2 t}\right),
\end{align*}
where $\alpha_2$ is as given in Assumption \ref{ass:dissipative} and $R_q$ in Lemma \ref{lem:dissipativemomentbound}.
Bounding the exponentials above gives the result.
\end{proof}
Combining Lemma \ref{lem:RFGbound} and Theorem \ref{thm:Z2L2bound2} we obtain the following result that allows us to characterize the mean-square stability. 
\begin{corollary}\label{cor:stabilityX}
Let Assumptions~\ref{assum:FG_bounds}, \ref{ass:dissipative} and \eqref{eq:diagonalize} hold. Then the SDE \eqref{eq:main_d} is non-linearly mean-square stable about $\Xc$ whenever $\MU<0$ in \eqref{eq:MU}. 
\end{corollary}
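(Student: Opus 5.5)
The plan is to verify the two requirements of Definition~\ref{Def:nonlinear-m-s-stability} separately: first that $\Xc$ is linearly mean-square stable in the sense of Definition~\ref{Def:linear-m-s-stability}, and then that $\expect{\|Z(t)\|^2}\to0$. Both should follow from $\MU<0$.

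\emph{Step 1: $\expect{\|Z(t)\|^2}\to 0$.} This is the routine half. By Lemma~\ref{lem:RFGbound}, under Assumptions~\ref{assum:FG_bounds} and \ref{ass:dissipative} we have $\expect{\|R_F(Y(t),\Xc)\|^2}\vee\expect{\|\RG(Y(t),\Xc)\|_{\mathbf F}^2}\leq K$ for all $t\geq0$. Here $K:=K_{\Xc}\bigl(1+R_{q_1+2}+R_{q_2+2}+\|X_0\|^{2(q_1+2)}+\|X_0\|^{2(q_2+2)}\bigr)$ is a constant. A constant function is non-decreasing and of subexponential growth, so \eqref{eq:RFRG_bound} holds with $K(t)\equiv K$. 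Since \eqref{eq:diagonalize} is assumed, the final assertion of Theorem~\ref{thm:Z2L2bound2} gives $\lim_{t\to\infty}\expect{\|Z(t)\|^2}=0$, provided $\MU<0$ for some admissible $\delta_1,\delta_2>0$. I read the hypothesis ``$\MU<0$'' in exactly this sense.

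\emph{Step 2: linear mean-square stability.} I would deduce this from Corollary~\ref{cor:linear-mean-square-stable} by showing that all eigenvalues of $\Zmat$ have negative real part. Since $\Zmat$ is block upper triangular, its spectrum is the union of the spectra of $A=\mathbf{D}F(\Xc)$ and of $\mathcal{L}:=\tilde A+\sum_i B_i\otimes B_i$.

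For the block $A$: $\mathrm{Re}\,\lambda\leq\lambda_{\max}\{\mathbf{D}F(\Xc)\}<\MU/2<0$ for every eigenvalue $\lambda$ of $A$. Here $\lambda_{\max}$ is understood as the largest real part of an eigenvalue of the diagonalizable $\mathbf{D}F(\Xc)$, as in the proof of Theorem~\ref{thm:Z2L2bound2}.

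For the block $\mathcal L$: $\mathcal L$ is the generator of $\mathrm{vec}(P)$ for the homogeneous system $d\hat Y=A\hat Y\,dt+\sum_i B_i\hat Y\,dW_i$. This is \eqref{eq:sdeLinMult} with $\Lambda=\Gamma_i=0$, so that $M$ decouples. For that system I would repeat the It\^o computation from the proof of Theorem~\ref{thm:Z2L2bound2} with all remainder terms absent, applied to $\hat z=\P^{-1}\hat Y$. This gives $\expect{\|\hat z(t)\|^2}\leq\|\hat z(0)\|^2e^{\MU t}$. Hence $e^{t\mathcal L}\mathrm{vec}(yy^T)\to0$ exponentially for every $y\in\mathbb{R}^d$.

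\emph{Step 3: the main obstacle.} The rank-one matrices $yy^T$ only span the symmetric matrices, whereas $\mathcal L$ acts on all of $\mathbb{R}^{d^2}$. I would handle this in two ways.

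First, $\mathcal L$ preserves the symmetric and antisymmetric subspaces. For $\mathrm{vec}$ of an arbitrary $P$, the matrix $\expect{\hat Y_1\hat Y_2^T}$ for two solutions $\hat Y_1,\hat Y_2$ of the homogeneous system (with initial data $y_1,y_2$) solves the same ODE. Its size is bounded via Cauchy--Schwarz by $\bigl(\expect{\|\hat Y_1\|^2}\expect{\|\hat Y_2\|^2}\bigr)^{1/2}$, which decays exponentially. The matrices $y_1y_2^T$ span all of $\mathbb{R}^{d\times d}$, so $e^{t\mathcal L}\to0$ and $\mathcal L$ is Hurwitz.

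Second, as a fallback that avoids the spectrum entirely, I would bound $\expect{\|\Yt(t)\|^2}$ directly. I would apply It\^o to $\|\P^{-1}\Yt\|^2$ and split the affine diffusion term by Young as $(1+\delta_2)\|\mathbf{D}G(\Xc)\|_{T_3}^2\|\tilde z\|^2+(1+\delta_2^{-1})\|G(\Xc)\|_{\mathbf F}^2$. Gronwall then gives the uniform-in-$t$ bound that is the first part of Definition~\ref{Def:linear-m-s-stability}. The existence of the limit $\BETA^2$ is then supplied by Remark~\ref{rem:Qinfty} once $\Zmat$ is known to be Hurwitz.

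Combining Steps 1--3 yields nonlinear mean-square stability of $\Xc$.
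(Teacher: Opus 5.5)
\textbf{Overall.} Your Steps~2--3 are sound, but your Step~1 relies on a false result, and the corollary itself is false, so it cannot be proved as stated.

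\textbf{What is correct.} Your Step~1 is the paper's entire proof: the paper simply says ``combining Lemma~\ref{lem:RFGbound} and Theorem~\ref{thm:Z2L2bound2}''. Your Steps~2--3 supply the linear mean-square stability clause of Definition~\ref{Def:nonlinear-m-s-stability}, which the paper never checks. The block-triangularity of $\Zmat$, the homogeneous bound $\expect{\|\hat z(t)\|^2}\le\|\hat z(0)\|^2e^{\MU t}$, and the bilinear moments $\expect{\hat Y_1\hat Y_2^T}$ (to reach non-symmetric matrices) together do show that $\Zmat$ is Hurwitz.

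\textbf{Where Step~1 fails.} The last assertion of Theorem~\ref{thm:Z2L2bound2} is false, so invoking it proves nothing; you inherit this from the paper. Its proof reaches
$u(t):=\expect{\|z(t)\|^2}\le\|z(0)\|^2+cK(t)\,t+\int_0^t\MU\,u(s)\,ds$, with $c=1+\delta_1^{-1}+\delta_2^{-1}$. It then applies Gronwall to conclude $u(t)\le(\|z(0)\|^2+cK(t)t)e^{\MU t}$. That form of Gronwall requires a nonnegative rate, and for $\MU<0$ it fails. Indeed, $u(t)=\tfrac{c}{|\MU|}(1-e^{\MU t})$ solves $u'=\MU u+c$ with $u(0)=0$. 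So it satisfies the integral inequality (with $K\equiv1$) with equality. Yet $u(t)\to c/|\MU|$, whereas $ct\,e^{\MU t}\to0$. What the pointwise differential inequality actually gives for constant $K$ is
\[
\expect{\|Z(t)\|^2}\le e^{\MU t}\|Z(0)\|^2+\frac{cK}{|\MU|}\bigl(1-e^{\MU t}\bigr).
\]
This is bounded but does not vanish. Your own fallback in Step~3 shows the same mechanism for $\Yt$: there the constant forcing $\|G(\Xc)\|_{\mathbf F}^2$ produces a nonzero limit.

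\textbf{Counterexample to the corollary.} Take the additive-noise pitchfork of Section~\ref{subsec:pitchfork}: $F(x)=\gamma x-x^3$, $G\equiv\sigma$, $\gamma>0$.
\begin{itemize}
\item All hypotheses hold. The paper verifies Assumption~\ref{ass:dissipative}; $\mathbf{D}^2F(x)=-6x$ gives the polynomial bound; and \eqref{eq:diagonalize} is trivial for $d=1$.
\item At both $x^\ast_\pm=\pm\sqrt\gamma$ we have $\mathbf{D}F(x^\ast_\pm)=-2\gamma$ and $\mathbf{D}G\equiv0$, so $\MU<0$ for small $\delta_1$.
\item Fix any $X_0$ and let $\Yt_\pm$ be the linearisations at $x^\ast_\pm$, driven by the same $W$. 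By Lemma~\ref{lem:arnold}, $\expect{\Yt_\pm(t)}=e^{-2\gamma t}(X_0-x^\ast_\pm)\to0$.
\item Suppose $Z_\pm:=X-x^\ast_\pm-\Yt_\pm$ satisfied $\expect{|Z_\pm(t)|^2}\to0$. Then $\expect{Z_\pm(t)}\to0$, i.e.\ $\expect{X(t)}\to x^\ast_\pm$.
\item This would hold for both signs simultaneously, for the same solution $X$, which is impossible.
\end{itemize}
This is consistent with the paper's own remark that paths switch between the two wells. The same contradiction arises whenever two distinct deterministic equilibria both have $\MU<0$.

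\textbf{What could be proved instead.} A provable version would only give $\limsup_{t\to\infty}\expect{\|Z(t)\|^2}\le cK/|\MU|$. Alternatively, it would need an extra hypothesis forcing $K(t)\to0$.
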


\section{Examples and numerical illustrations.}\label{sec:numerics}

To demonstrate our general results we analyse some examples of typical dynamical systems. The key elements to determine linear and nonlinear mean-square stability either numerically or analytically are as follows.
\begin{enumerate}
\item Determine the mean-square linear stability from $\lambda_{\max} \{ \mathbb{A} \}$.
\item Solve the linear equation $\mathbb{A} Q_\infty = -S $
for $Q_\infty:=\lim_{t\to\infty} Q(t)$, where $\mathbb{A}$, $S$ and $Q$ are as defined in the linear ODE~\eqref{eq:linearsystem} with \eqref{eq:linearization}, see Remark~\ref{rem:Qinfty}.
\item  
Knowing $Q_\infty$  find $\BETA^2=\lim_{t\to \infty}\expect{\|\Yt(t)\|^2}$ in \eqref{eq:BETA} from the corresponding components of $Q_\infty$. 
\item Use \eqref{eq:MU} to compute $\MU$ and hence relate the dynamics of the nonlinear SDE and its linearization.
\end{enumerate}
We stress that this approach does not require any simulation of a stochastic system as every step is purely deterministic. In practice the key quantities to compute are $\BETA$ and $\MU$.

We provide example code in \texttt{octave}, \texttt{matlab} and \texttt{python} to perform these calculations in \url{https://github.com/Gabriel-Lord/MS-stability-and-Bifurcations}.  We note that there is a redundancy in the system \eqref{eq:linearsystem} as, $\expect{\Yt_i\Yt_j}=\expect{\Yt_j\Yt_i}$. It is a relatively straightforward computation to remove this redundancy and consider a smaller linear system to find $Q_\infty$. This is useful when considering large systems of SDEs (see for example Section~\ref{sec:Allen-Cahn}).

In each example we compute  bifurcation diagrams and illustrate some sample trajectories for the SDEs we consider. 
In the figures below we shade the area (a ball with radius $\BETA$ around the deterministic equilibrium $\Xc$) for the parameter regime,  where the linear dynamics and nonlinear dynamics are close, i.e.~where $\boldsymbol{\mu}<0$ in Corollary \ref{cor:stabilityX}.
We indicate with a red dot the point where the linear system changes stability (when available), i.e.~where $\lambda_{\text{max}}\approx 0$ in Corollary \ref{cor:linear-mean-square-stable}.
For sample trajectories we show five sample paths starting in a ball of radius $10^{-6}$
around the deterministic equilibrium $\Xc$.
In the numerical experiments we fix $\delta_1=\delta_2=10^{-3}$ for $\MU$ in \eqref{eq:MU} throughout.
\begin{remark}
    In our implementation to show sample trajectories we used an explicit Euler-Maruyama method with taming, see for example~\cite{HutzenthalerJentzenKloeden,Sabanis}. This method is strongly convergent for a broad class of SDEs, though for large time-steps and large function values it can distort the observed dynamics. Nonetheless it is straightforward to implement with a fixed step and we do so here to generate a small number of trajectories on a fine mesh ($\Delta t=0.01$) for display. However, since the stepsize must be small, it is not a method we would necessarily recommend for applications requiring the simulation of a large ensemble of trajectories. An alternative approach would be to adapt the time step, see for example \cite{FangGiles,Kelly20181523,KellyLord2022} or to use a splitting type method for the particular SDE. In a number of the examples below the SDE solution is constrained to a domain (that is $X(t)\in D$ for all $t\geq 0$ for some set $D\subset \mathbb{R}^d$) that it may be desirable for the numerical discretisation to preserve. For example, the invariant region is the positive cone $D=\mathbb{R}^+$ for the example on pitchfork bifurcation with linear multiplicative noise in Section \ref{subsec:pitchfork}. There are specialised domain preserving numerical methods, see for example \cite{ERDOGAN202630,liu2025ergodicestimatesonestepnumerical,Milstein19981010}, but it is not true in general that a convergent method will ensure this. In addition there are specialized methods available to address the particular numerical challenges associated with the Cox-Ingersoll-Ross model~\eqref{eq:CIR}, and linearly stable methods for the stochastic Allen-Cahn equation~\eqref{sec:Allen-Cahn}.
\end{remark}
We start by considering some standard one-dimensional examples before examining higher dimensional ones:  a two dimensional system displaying bistability, the three dimensional Lorenz equations and a fifty dimensional system of SDEs arising from a finite difference approximation of an SPDE.

\subsection{Some one dimensional examples.}
We start by considering some standard deterministic bifurcation scenarios in one-dimension: ODEs exhibiting Pitchfork, Fold and Transcritical bifurcations to which we add some stochastic forcing. We then examine the Cox-Ingersoll-Ross model, commonly used in financial applications.

\subsubsection{Pitchfork: SDE with additive and multiplicative noise.}\label{subsec:pitchfork}
We consider the following scalar SDE  with either additive or multiplicative one dimensional noise for which  the corresponding ODE has a pitchfork bifurcation, 
$$dX(t) = F(X(t)) dt + G(X(t)) dW(t),$$
where $F(x) =\gamma x - x^3$ and $G$ is one of the following
$$G(x) = \sigma; \quad G(x)= \sigma x; \quad G(x) = \sigma x^2.$$ 
In the deterministic ODE at $\gamma=0$ we have the pitchfork bifurcation and for $\gamma>0$ the equilibrium $\Xc=0$ is unstable and the equilibria $\Xc_{\pm}= \pm \sqrt{\gamma}$ are asymptotically stable.
First we examine mean-square dissipativity (see  Definition~\ref{Def:ms-dissipative}) for the SDEs by showing that Assumption \ref{ass:dissipative} holds and applying Corollary~\ref{cor:ms-dissipative}. 
\begin{enumerate}
\item Additive noise: we have $G(x) = \sigma$ and obtain
$$
\langle x, F(x) \rangle +\frac{p-1} 2 |G(x)|^2 = 
\gamma x^2-x^4 +(p-1) \frac{\sigma^2}{2}  \leq - \alpha_2 x^2+\alpha_3, 
$$
if $\alpha_3\geq (\tfrac{\alpha_2+\gamma}{2})^2+(p-1) \tfrac{\sigma^2}{2}$.
\item Linear multiplicative noise:  considering $G(x) = \sigma x$ gives
$$
\langle x, F(x)\rangle + \frac {p-1} 2 |G(x)|^2 = 
\Big(\gamma + (p-1)\frac{\sigma^2}{2}\Big) x^2-x^4 \leq -\alpha_2 x^2+\alpha_3,
$$
if $\alpha_3\geq \tfrac{1}{4}(\alpha_2+\gamma+(p-1) \tfrac{\sigma^2}{2})^2$.

\item Quadratic multiplicative noise: taking $G(x) = \sigma x^2$ we find 
$$
\langle x, F(x) \rangle + \frac {p-1}2 |G(x)|^2 = \gamma x^2- \Big(1- (p-1) \frac{\sigma^2}{2}\Big) x^4 \leq  -\alpha_2 x^2+\alpha_3,
$$
if $\alpha_3 \geq \tfrac{(\gamma + \alpha_2)^2}{2(2 - (p-1)\sigma^2)} $ and $\sigma^2 < 2/(p-1)$. 
\end{enumerate}
\textbf{Additive Noise.}
The matrices $\mathbb{A}$ and $S$ for the linearised system~\eqref{eq:linearsystem} for this example are 
$$
\mathbb{A} = \begin{pmatrix}
    DF|_{\Xc_{\pm}}+DF|_{\Xc_\pm} & 0 \\
    0  & DF|_{\Xc_\pm} 
\end{pmatrix}, \qquad S =\begin{pmatrix}
    \sigma^2 \\  0 
\end{pmatrix}.
$$
From \eqref{eq:MU} we have
$\MU = -2\gamma + \delta_1 + 0.$
From this we can determine the nonlinear mean-square stability of $\Xc_{\pm}$. 
In fact for this simple one-dimensional example it is straightforward to compute all the steps in the analysis of Sections~\ref{sec:lineariz}--\ref{sec:MS_lin_nonlin} analytically. 

In Figure~\ref{fig:pitchfork_add} we plot in (a) the bifurcation diagram and in (b) sample paths for $\gamma=0.25$ and $\sigma=0.1$.
The region of nonlinear mean-square stability in Figure~\ref{fig:pitchfork_addA} is determined by $\MU$. In Figure~\ref{fig:pitchfork_addB} we have  $x^\ast_{\pm}=\pm 0.5$ and $\BETA^2=\sigma^2/4\gamma=0.1$ as can be observed in the figure.
We recall our observation from the introduction and Corollary~\ref{cor:nonliear-stability-path} and observe that sample paths do not stay close to the deterministic equlibria $\Xc_{\pm}$ even though they are mean-square stable. 
This is consistent with the known long-time behaviour of the SDE. For example it is well-known that the invariant density $p_\infty$ satisfies
$p_\infty(x) \propto \exp\left(\mathcal{V}(x)\right)$ where $\mathcal{V}(x)=\frac{-x^4}{2\sigma^2}+\frac{\gamma x^2}{\sigma^2}$.
Hence in the long term dynamics there are transitions between the two wells in the potential $\mathcal{V}$.
\begin{figure}[htbp]
    \begin{center}
     \begin{subfigure}[b]{0.49\textwidth}
    \includegraphics[width=\textwidth]{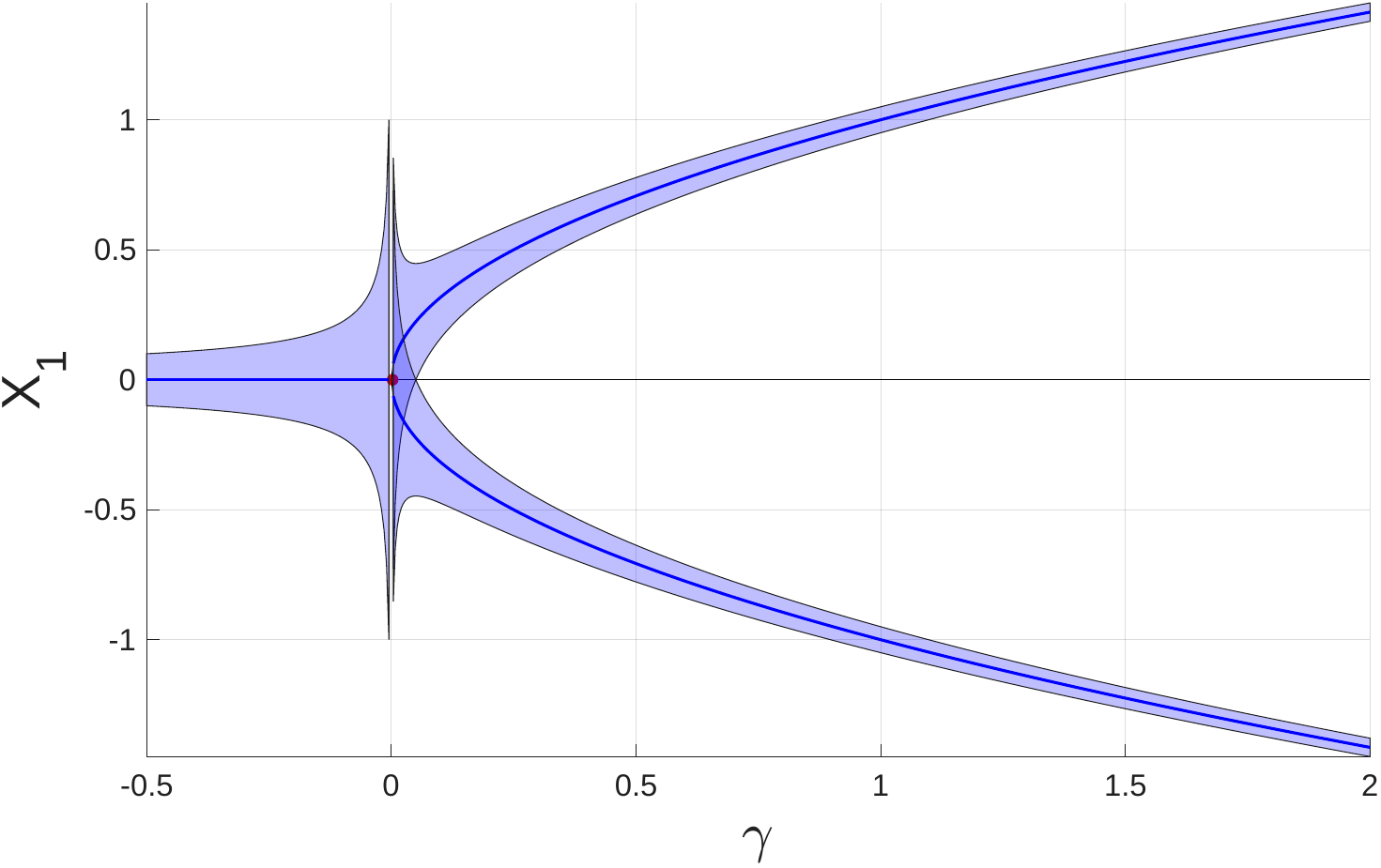}
    \caption{}
    \label{fig:pitchfork_addA}
    \end{subfigure}
    \hfill
    \begin{subfigure}[b]{0.49\textwidth}
     \includegraphics[width=\textwidth]{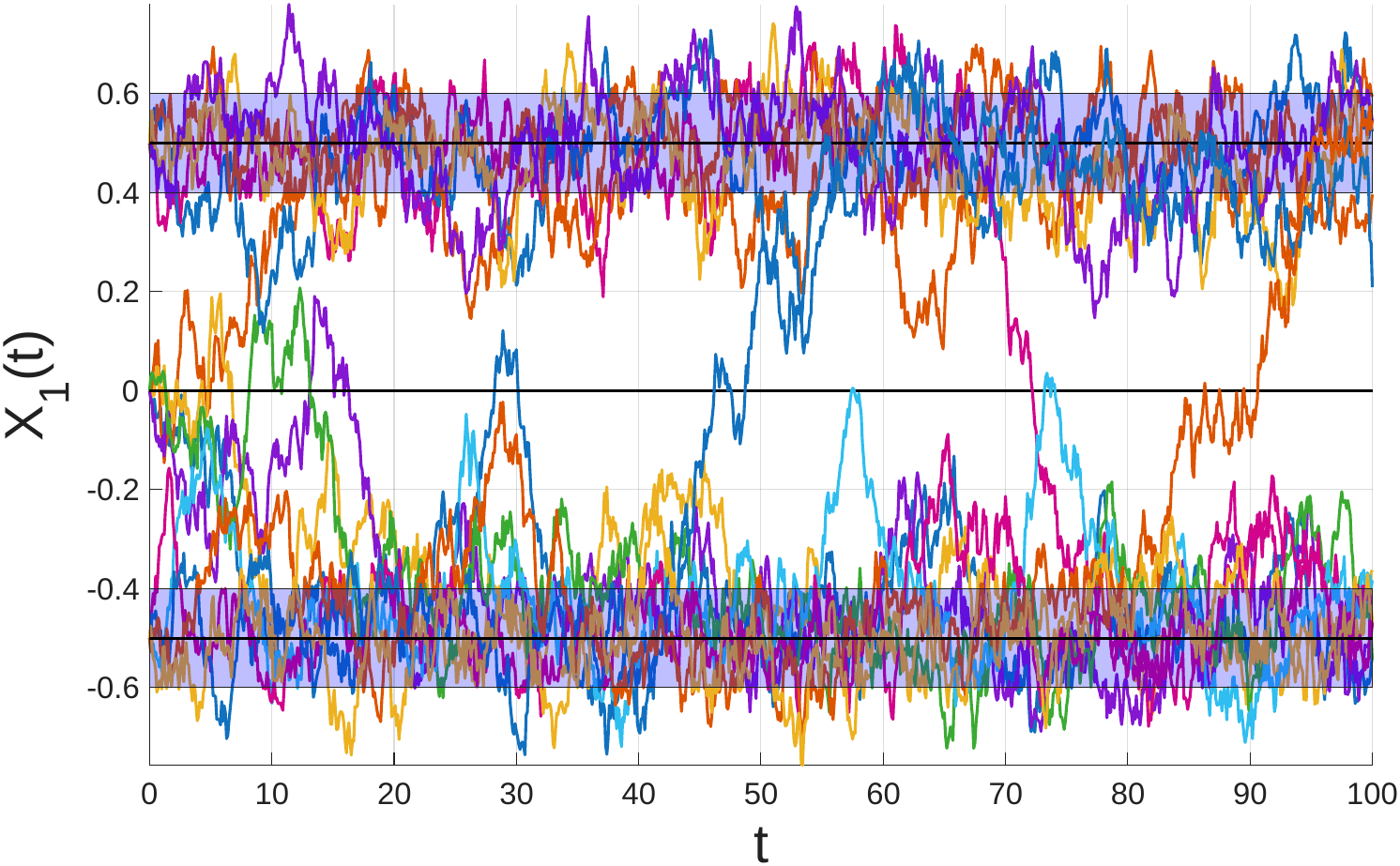}
        \caption{}
    \label{fig:pitchfork_addB}
    \end{subfigure}
    \end{center}
    \caption{Pitchfork SDE with additive noise and $\sigma=0.1$. (a) Bifurcation diagram and (b) sample trajectories at $\gamma=0.25$.}
    \label{fig:pitchfork_add}
\end{figure}

\noindent
\textbf{Linear Multiplicative Noise.}
For linear multiplicative noise we take $G(x)=\sigma x$.

As a numerical illustration we examine the bifurcation diagram for fixed $\sigma$ in Figure~\ref{fig:pitchfork_LinA} as $\gamma$ varies and in Figure~\ref{fig:pitchfork_LinB} as $\sigma$ varies and $\gamma$ is fixed. 
Recall that the (red) circles indicate where the linear system changes stability and the shaded area where the deterministic equilibria $\Xc_\pm$ are non-linearly mean-square stable. We observe here, and in other bifurcation diagrams, that the mean-square stability of the equilibria changes at different values than in the deterministic setting.
It is clearly visible in Figure~\ref{fig:pitchfork_LinB} that as the noise increases the non-linear mean-square stability is lost (in a noise induced bifurcation). 
Sample trajectories are shown for $\gamma=0.25$ and $\sigma=0.1$ in Figure~\ref{fig:pitchfork_LinTrajA} and in
Figure~\ref{fig:pitchfork_LinTrajB} for $\sigma=0.8$. Note that the shaded regions now overlap. Here we see the effect of the increase in $\sigma$ on the variance of the sample trajectories and on $\BETA$. The instability of $\Xc=0$ is clearly visible in Figure~\ref{fig:pitchfork_LinTrajA}.
\begin{figure}[htbp]
    \begin{center}
     \begin{subfigure}[b]{0.49\textwidth}
\includegraphics[width=\textwidth]{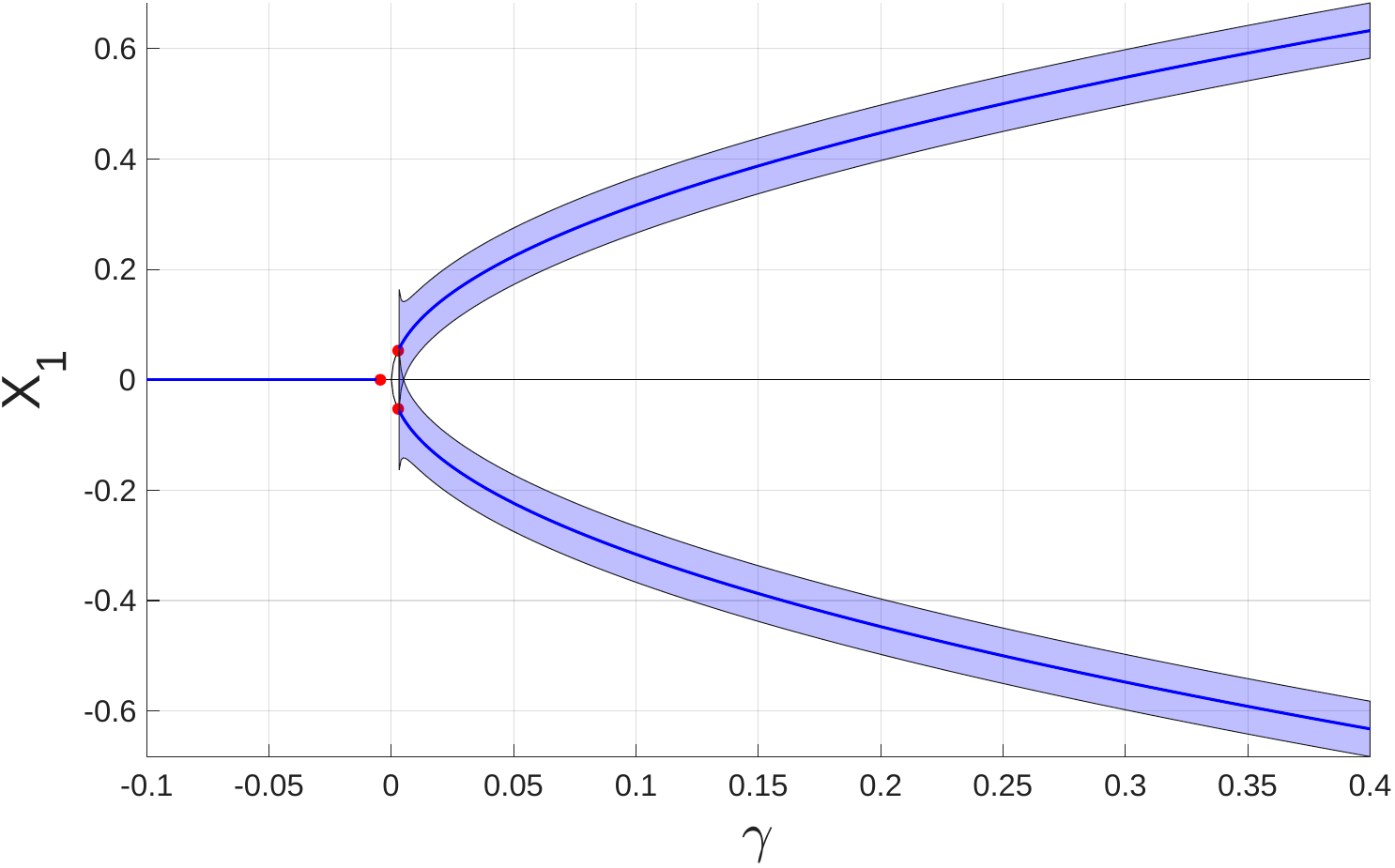}
    \caption{}
    \label{fig:pitchfork_LinA}
    \end{subfigure}
    \hfill
    \begin{subfigure}[b]{0.49\textwidth}
         \includegraphics[width=\textwidth]{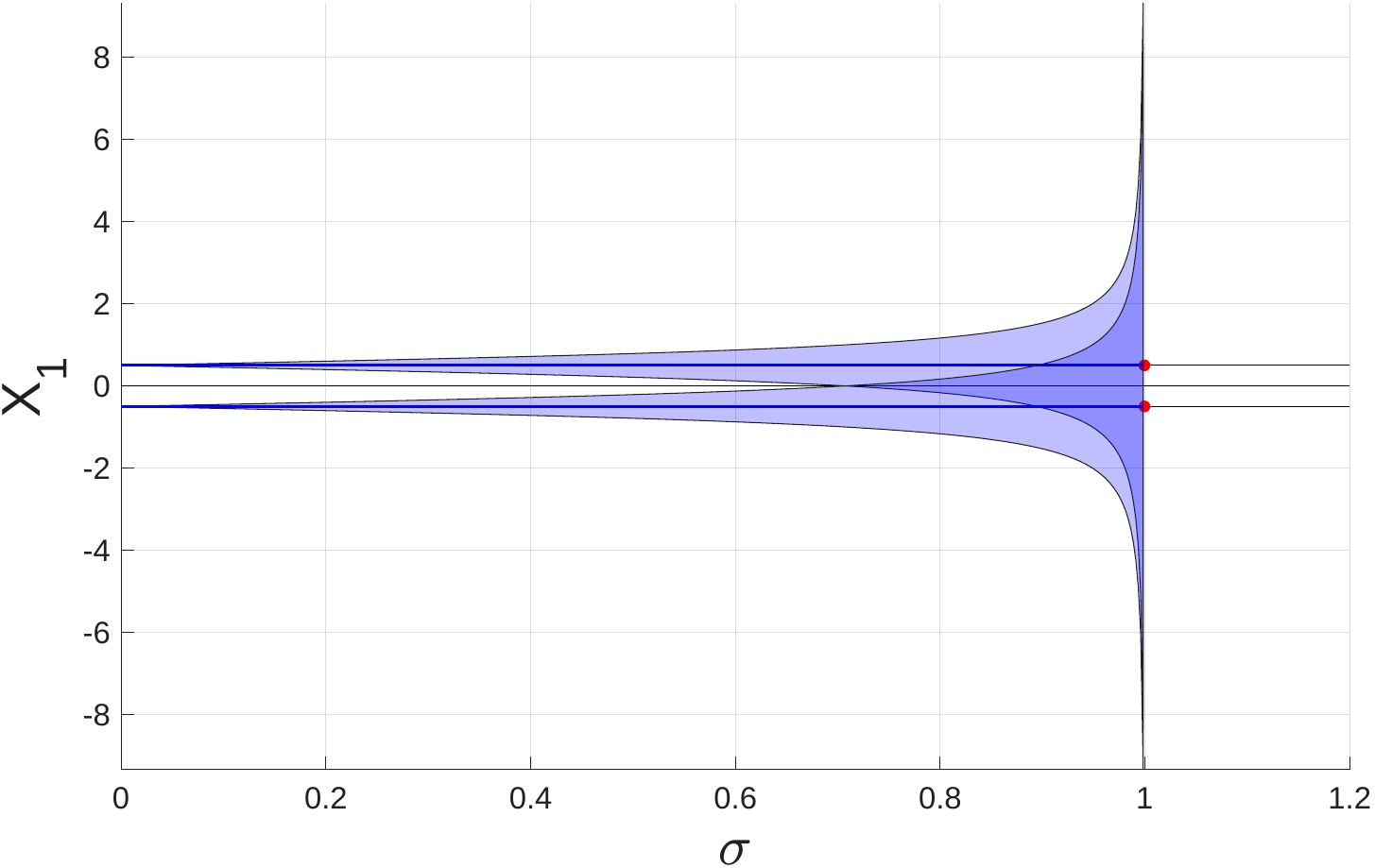}
        \caption{}
    \label{fig:pitchfork_LinB}
    \end{subfigure}
    \end{center}
    \caption{Pitchfork SDE with linear multiplicative noise with (a) $\sigma=0.1$ fixed and varying $\gamma$ and (b) $\gamma=0.25$ fixed and varying $\sigma$.}
    \label{fig:pitchfork_LinBif}
\end{figure}

\begin{figure}[htbp]
    \begin{center}
     \begin{subfigure}[b]{0.49\textwidth}
    \includegraphics[width=\textwidth]{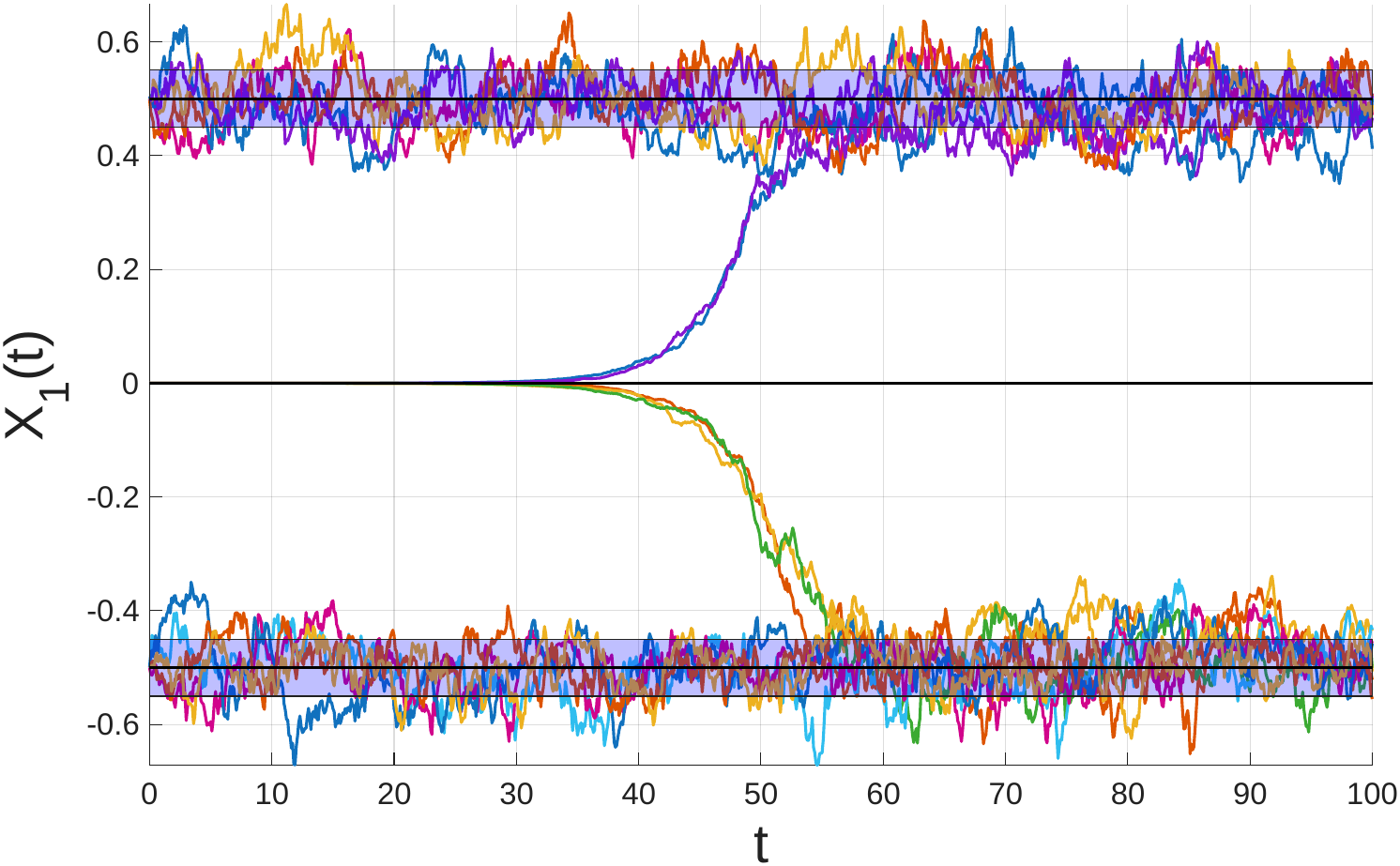}
    \caption{}
    \label{fig:pitchfork_LinTrajA}
    \end{subfigure}
    \hfill
    \begin{subfigure}[b]{0.49\textwidth}

         \includegraphics[width=\textwidth]{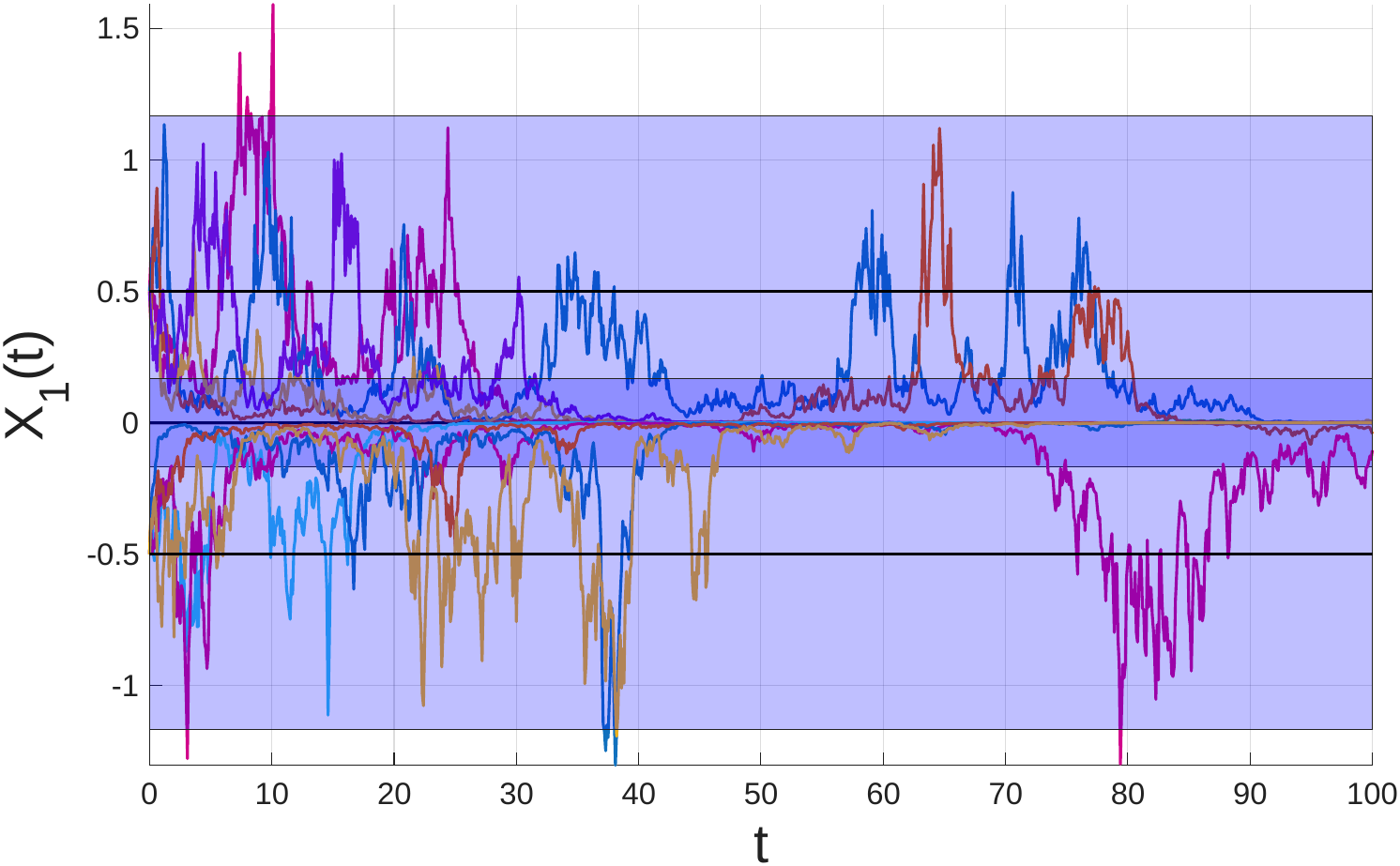}
        \caption{}
    \label{fig:pitchfork_LinTrajB}
    \end{subfigure}
    \end{center}
    \caption{Sample trajectories for linear multiplicative noise with $\gamma=0.25$ and (a) $\sigma=0.1$ fixed and  (b) $\sigma=0.8$ fixed.}
    \label{fig:pitchfork_LinTraj}
\end{figure}

\noindent
\textbf{Quadratic Multiplicative Noise.}
We now take $G(x)=\sigma x^2$. We examine numerically the bifurcation diagram for fixed $\sigma$ as $\gamma$ varies, Figure~\ref{fig:pitchfork_quadA}, and illustrate some sample trajectories in Figure~\ref{fig:pitchfork_quadB}. 
The plots are similar to those for linear multiplicative noise however we observe that $\BETA$ is smaller (compare Figure~\ref{fig:pitchfork_LinTrajA} and Figure~\ref{fig:pitchfork_quadB} for example).
\begin{figure}[htbp]
    \begin{center}
     \begin{subfigure}[b]{0.49\textwidth}
    \includegraphics[width=\textwidth]{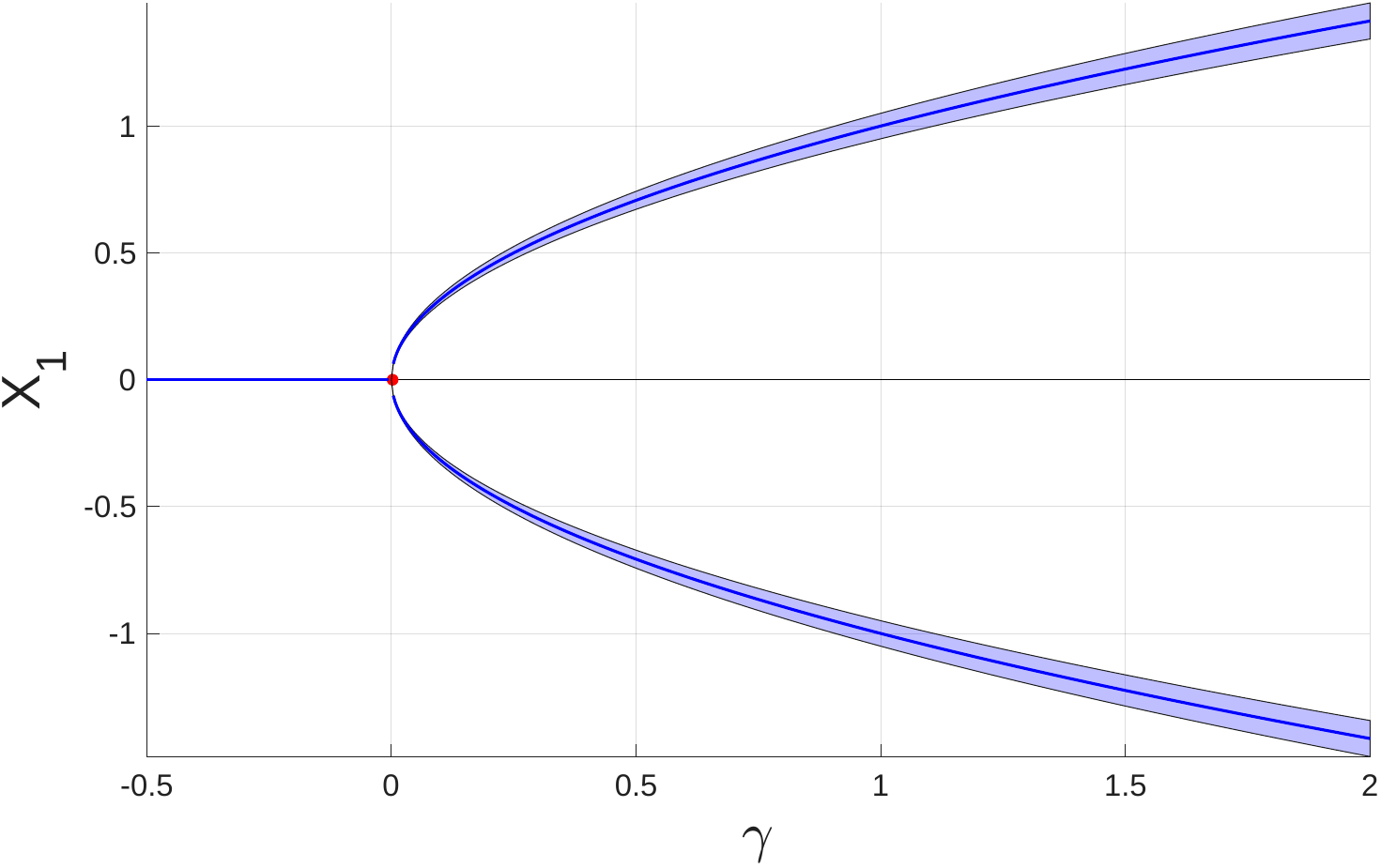}
    \caption{}
    \label{fig:pitchfork_quadA}
    \end{subfigure}
    \hfill
    \begin{subfigure}[b]{0.49\textwidth}
     \includegraphics[width=\textwidth]{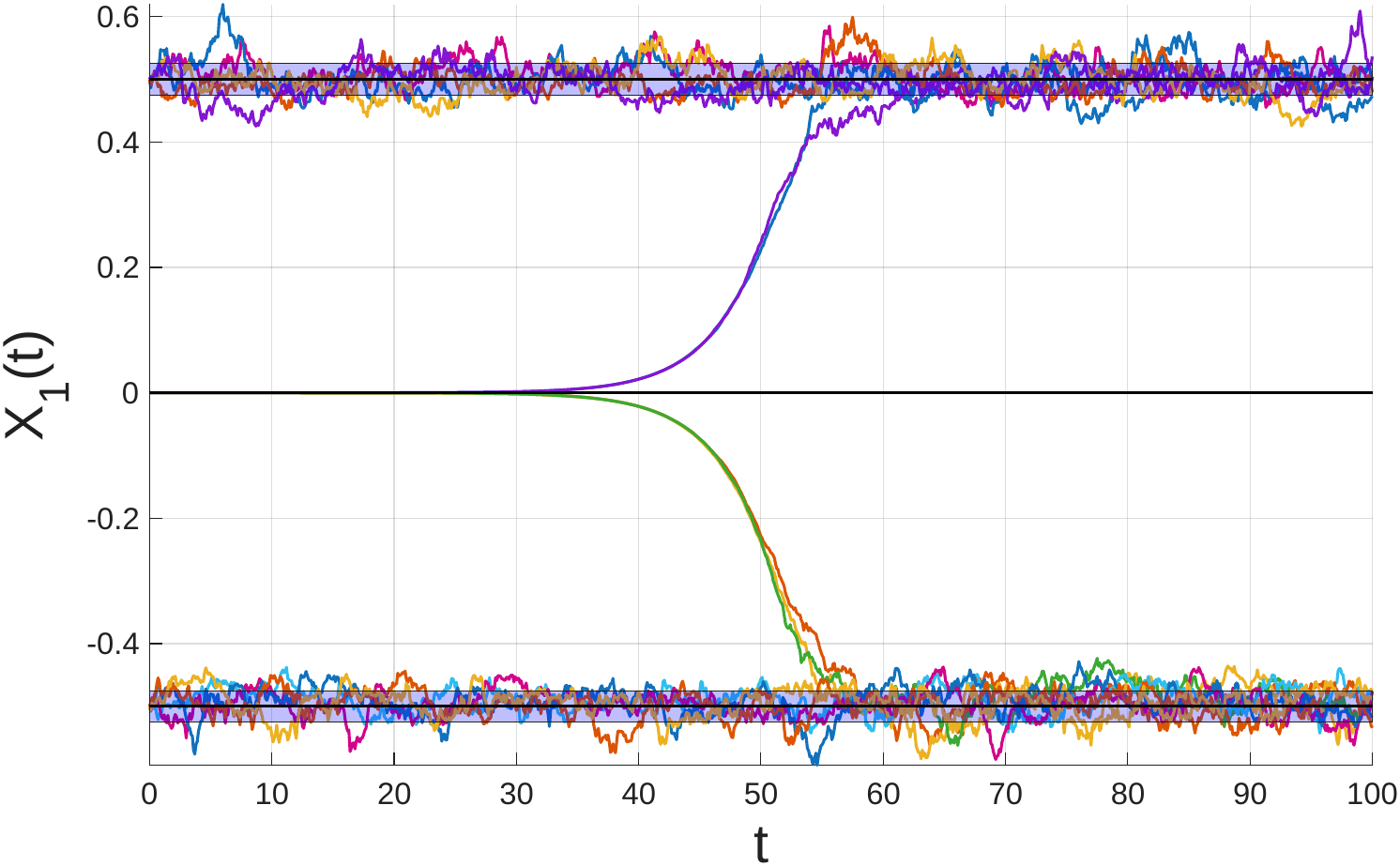}
        \caption{}
    \label{fig:pitchfork_quadB}
    \end{subfigure}
    \end{center}
    \caption{Pitchfork SDE with quadratic multiplicative noise with $\sigma=0.1$. (a) Bifurcation diagram and (b) sample trajectories at $\gamma=0.25$.}
    \label{fig:pitchfork_quad}
\end{figure}

\subsubsection{Fold bifurcation.}
\label{sec:fold}
We examine the following scalar SDE with either linear multiplicative noise or additive noise for which the corresponding ODE has a fold bifurcation at $\gamma=0$,
$$dX(t)  = \big[\gamma - X(t)^2\big] dt + \sigma_{11} X(t) dW_1(t) + \sigma_{12}dW_2(t).$$
Then for $\gamma>0$ there are two equilibria of the deterministic system, $\Xc_{\pm}=\pm\sqrt{\gamma}$.
We take $\gamma\geq 0$ as our bifurcation parameter.
First consider the case when $\sigma_{12}=0$ and set $\sigma_{11}=0.1$. In this case we note that for $t>0$, the solution $X(t)\geq 0$ if $X(0)=X_0\geq 0$ ($X(t)\leq 0$ for $X_0\leq 0$) and the SDE is mean-square dissipative in $\mathbb{R}^+$, but not in the whole of phase space $\mathbb{R}$. 
In Figure~\ref{fig:FoldMultBifA} we show the bifurcation diagram and in Figure~\ref{fig:FoldMultTrajB} five sample paths. 
\begin{figure}[htbp]
    \begin{center}
     \begin{subfigure}[b]{0.49\textwidth}
         \includegraphics[width=\textwidth]{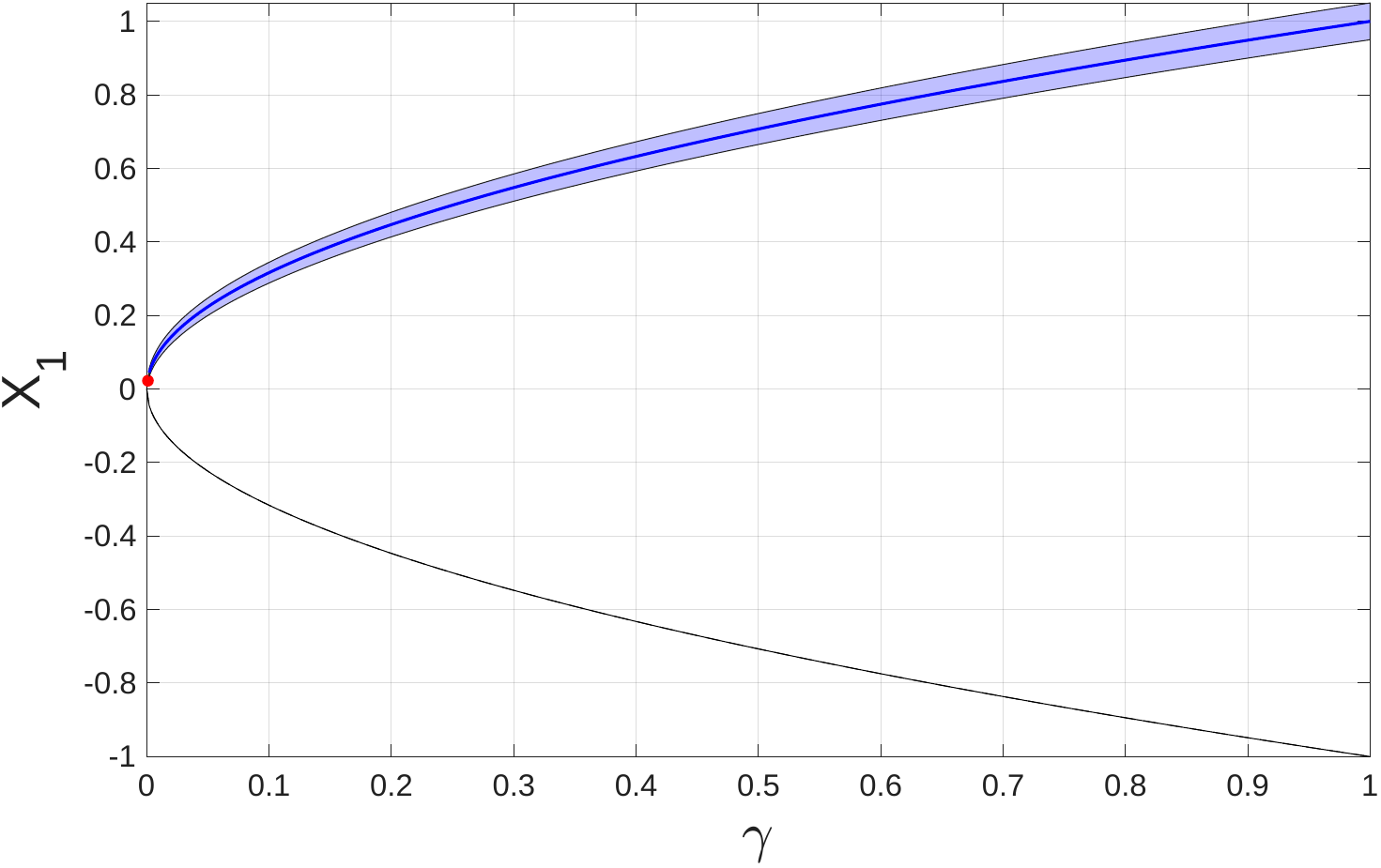}
    \caption{}
    \label{fig:FoldMultBifA}
    \end{subfigure}
    \hfill
    \begin{subfigure}[b]{0.49\textwidth}
    \includegraphics[width=\textwidth]{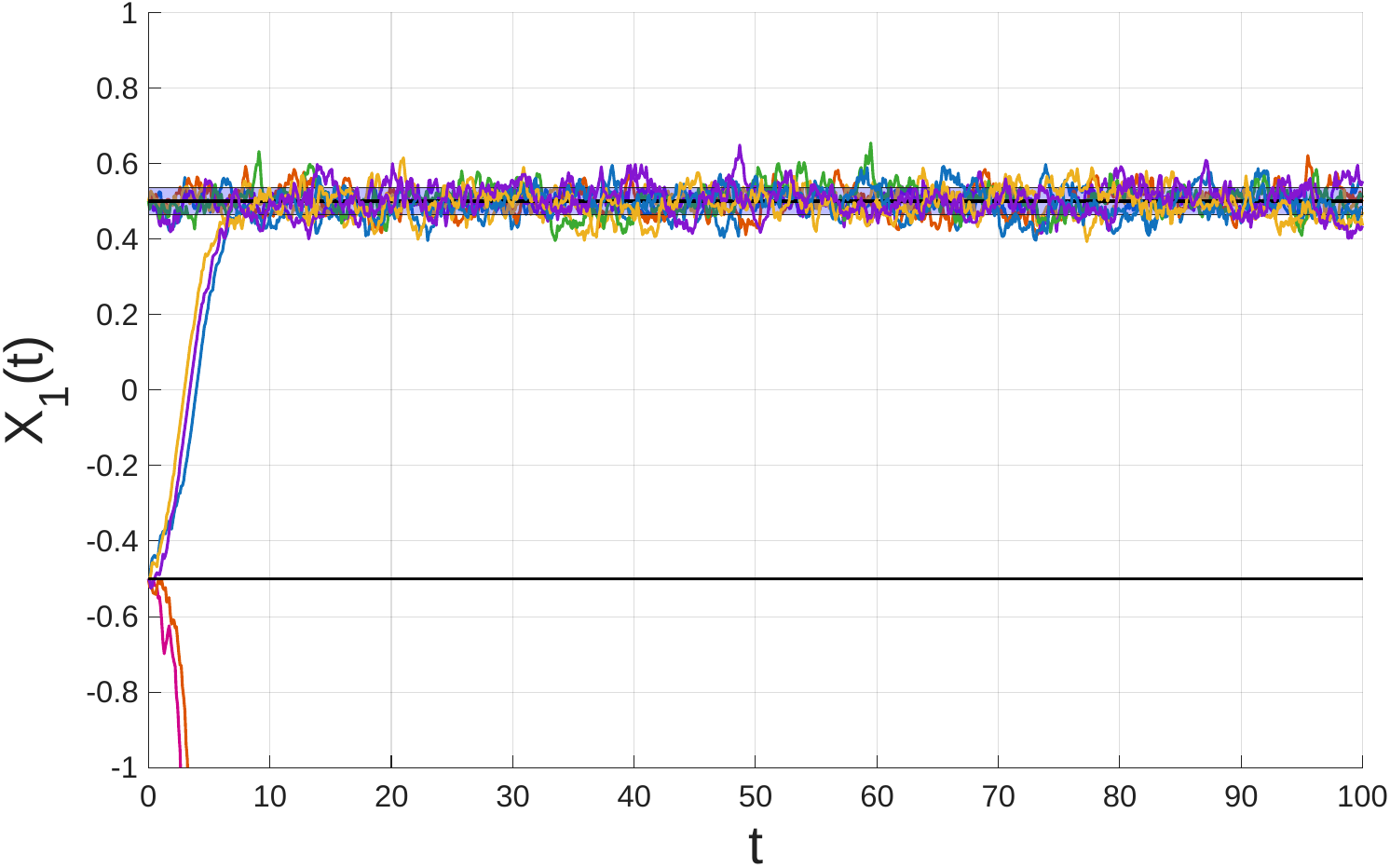}
        \caption{}
    \label{fig:FoldMultTrajB}
    \end{subfigure}
    \end{center}
    \caption{ (a) Bifurcation diagram for the fold SDE with multiplicative noise as $\gamma$ varies. This SDE is mean-square dissipative for $X(0)=X_0\geq 0$. (b) Sample trajectories at $\gamma=0.25$.}
    \label{fig:FoldMult}
\end{figure}

For the SDE with additive noise, i.e.~if $\sigma_{12}\neq 0$, the system is no longer mean-square dissipative, since solutions emanating from positive initial data may attain negative values and hence, the condition \eqref{eq:dissipative} in Assumption~\ref{ass:dissipative} no longer holds. We fix $\sigma_{12}=0.1$ and take $\sigma_{11}=0$. In Figure~\ref{fig:FoldAddBifA} we present the bifurcation diagram and in Figure~\ref{fig:FoldAddTrajB} we show five sample paths. Despite this, $\BETA$ gives some indication of the variability around $\Xc$ before trajectories diverge, in particular for those trajectories that remain positive for $t\in[0,100]$.
\begin{figure}[htbp]
    \begin{center}
     \begin{subfigure}[b]{0.49\textwidth}
         \includegraphics[width=\textwidth]{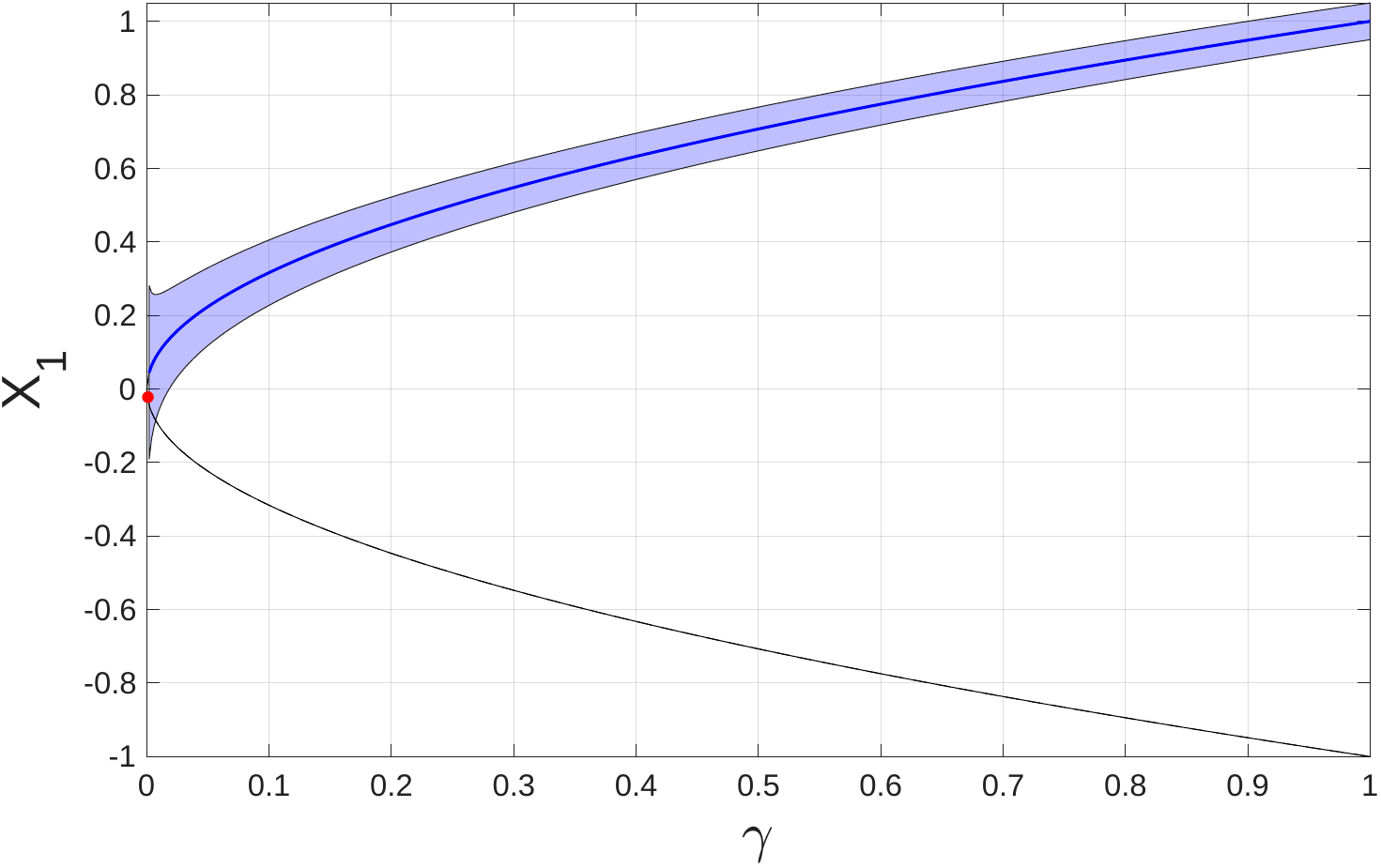}
    \caption{}
    \label{fig:FoldAddBifA}
    \end{subfigure}
    \hfill
    \begin{subfigure}[b]{0.49\textwidth}
    \includegraphics[width=\textwidth]{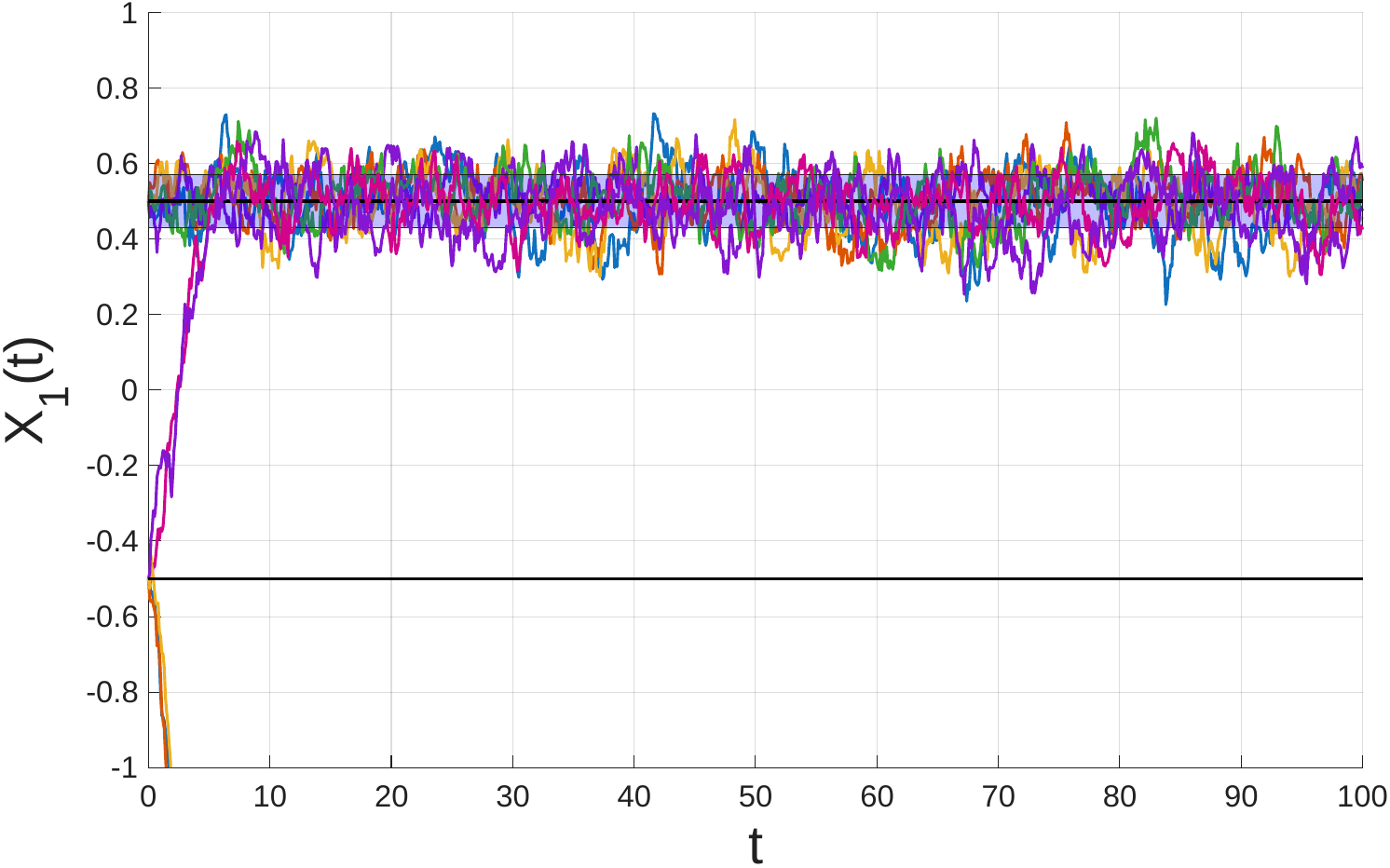}
        \caption{}
    \label{fig:FoldAddTrajB}
    \end{subfigure}
    \end{center}
    \caption{ (a) Bifurcation diagram for the fold SDE with additive noise as $\gamma$ varies. This SDE is not mean-square dissipative. (b) Sample trajectories at $\gamma=0.25$.}
    \label{fig:FoldAdd}
\end{figure}

\subsubsection{Transcritical bifurcation.}
\label{sec:trans}
We examine the following scalar SDE with either linear multiplicative noise or additive noise for which the corresponding ODE has a transcritical bifurcation at $\gamma=0$,
$$dX(t)  = \big[X(t)(\gamma - X(t))\big] dt + \sigma_{11} X(t) dW_1(t) + \sigma_{12}dW_2(t).$$
There are two equilibria of the deterministic system, $\Xc_{0}=0$ and $\Xc=\gamma$.
We take $\gamma$ as our bifurcation parameter.
First consider the case of multiplicative noise, i.e.~$\sigma_{12}=0$, and set $\sigma_{11}=0.1$. We note that then for $t>0$, $X(t)\geq 0$ if $X(0)=X_0\geq 0$ ($X(t)\leq 0$ for $X_0\leq 0$) and the SDE is mean-square dissipative for $X_0\geq 0$ for all $\gamma$. In Figure~\ref{fig:TransMultBifA} we show the bifurcation diagram and in Figure~\ref{fig:TransMultTrajB} five sample paths. 

\begin{figure}[htbp]
    \begin{center}
     \begin{subfigure}[b]{0.49\textwidth}
         \includegraphics[width=\textwidth]{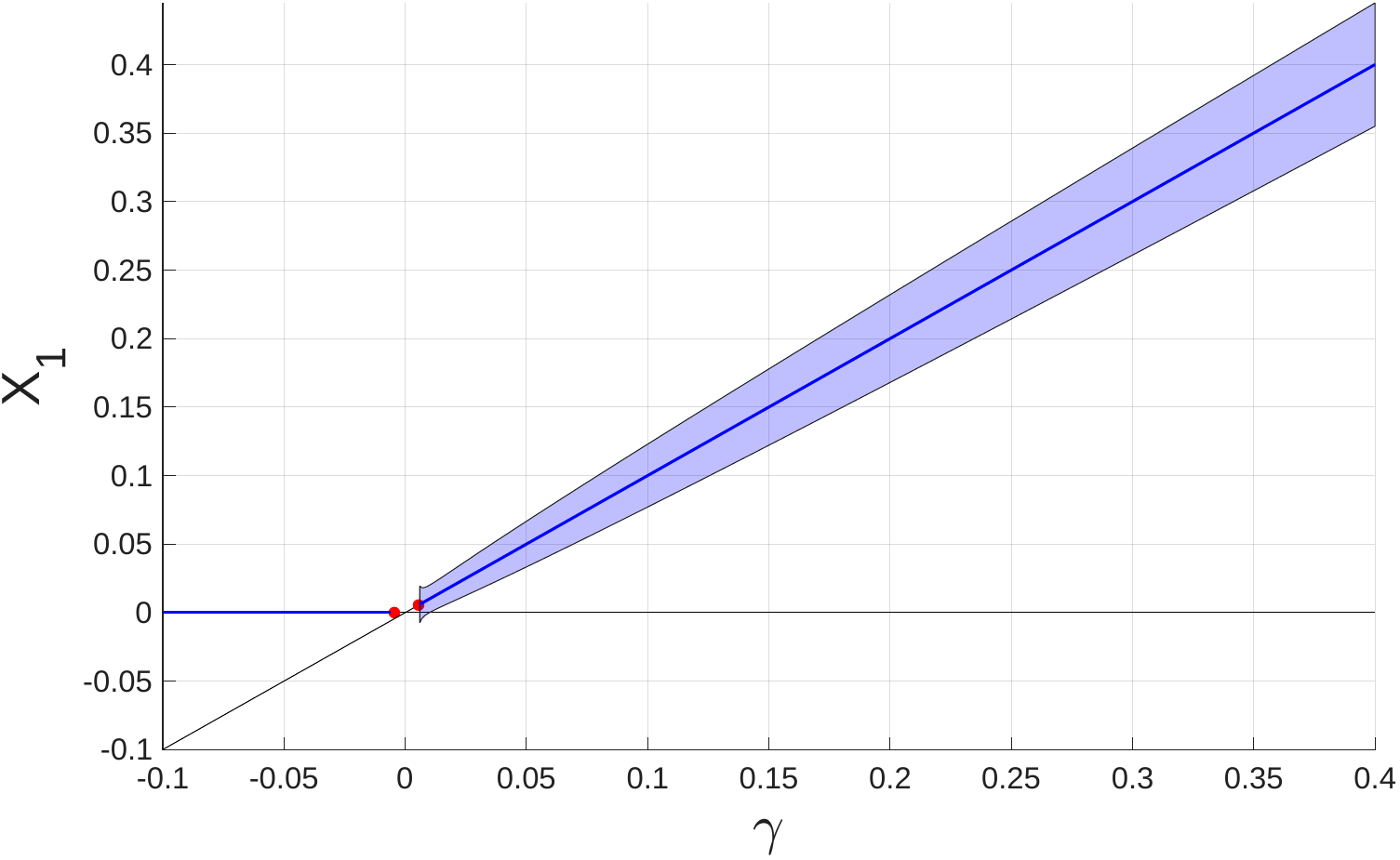}
    \caption{}
    \label{fig:TransMultBifA}
    \end{subfigure}
    \hfill
    \begin{subfigure}[b]{0.49\textwidth}
    \includegraphics[width=\textwidth]{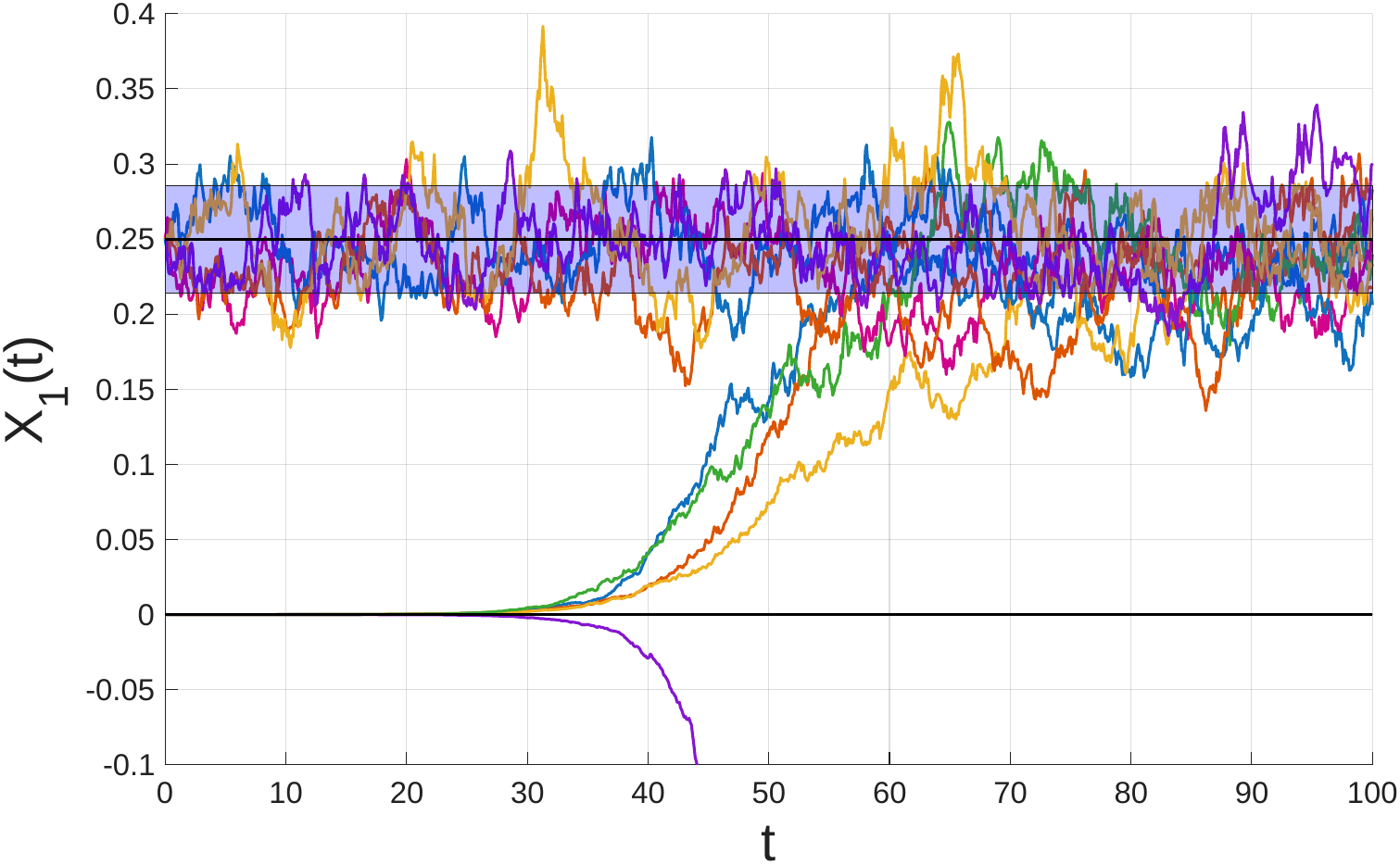}
        \caption{}
    \label{fig:TransMultTrajB}
    \end{subfigure}
    \end{center}
    \caption{ (a) Bifurcation diagram for the transcritical SDE with multiplicative noise as $\gamma$ varies. The system is mean-square dissipative for $X_0\geq 0$. (b) Sample trajectories at $\gamma=0.25$.}
    \label{fig:TransMult}
\end{figure}
In case of additive noise, i.e.~if $\sigma_{12}\neq 0$, the system is no longer mean-square dissipative for $\gamma\geq 0$, for the same reason as for the fold bifurcation example. We fix $\sigma_{12}=0.1$ and take $\sigma_{11}=0$. In Figure~\ref{fig:TransAddBifA} we present the bifurcation diagram and in Figure~\ref{fig:TransAddTrajB} we show five sample paths. Once again, $\BETA$ gives some indication of the variability around $\Xc$ before trajectories diverge.
\begin{figure}[htbp]
    \begin{center}
     \begin{subfigure}[b]{0.49\textwidth}
         \includegraphics[width=\textwidth]{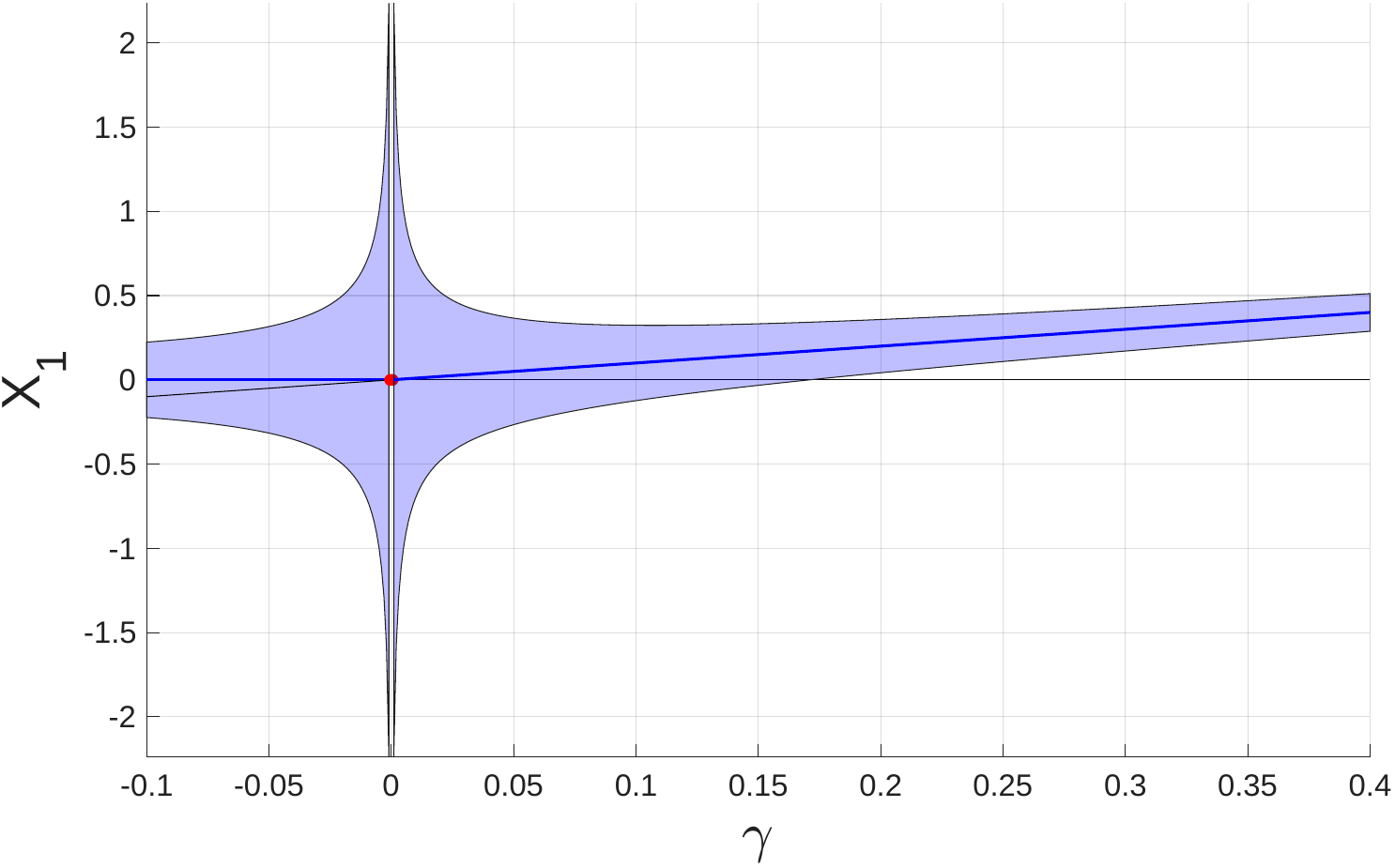}
    \caption{}
    \label{fig:TransAddBifA}
    \end{subfigure}
    \hfill
    \begin{subfigure}[b]{0.49\textwidth}
    \includegraphics[width=\textwidth]{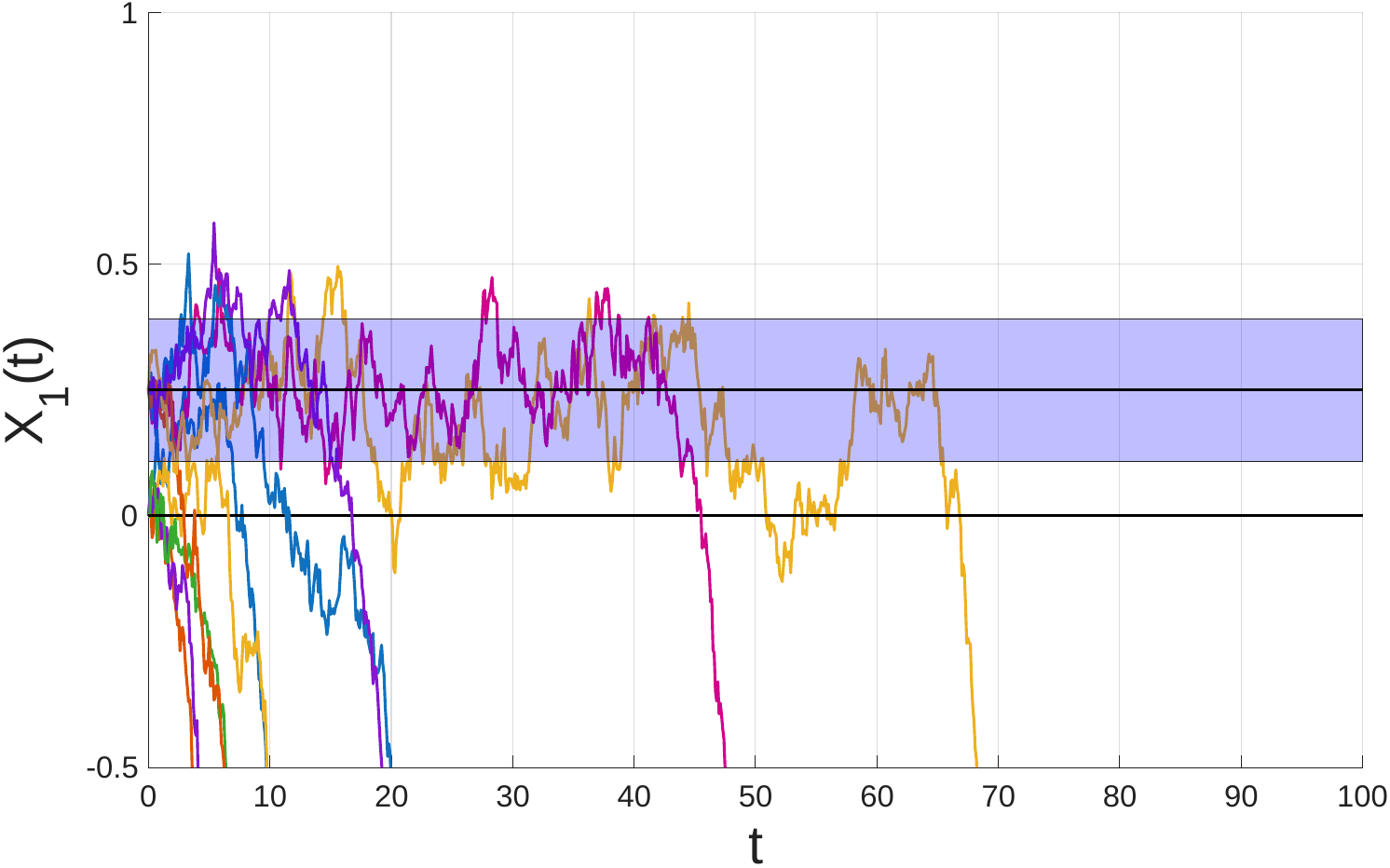}
        \caption{}
    \label{fig:TransAddTrajB}
    \end{subfigure}
    \end{center}
    \caption{ (a) Bifurcation diagram for the transcritical SDE with additive noise as $\gamma$ varies. Note that the system is not  mean-square dissipative. (b) Sample trajectories at $\gamma=0.25$.}
    \label{fig:TransAdd}
\end{figure}
\subsubsection{The Cox-Ingersoll-Ross (CIR) model.}
The CIR model is used prominently in finance as a model for interest rates and stochastic volatility processes. It is given by
\begin{equation}\label{eq:CIR}
d X(t) = \kappa(\theta - X(t)) dt + \sigma\sqrt{X(t)} dW(t);\quad X(0)=X_0>0,
\end{equation}
where $\kappa,\theta,\sigma>0$. We fix here $\kappa=2$ and $\theta=0.02$ and vary $\sigma$. The corresponding deterministic system (with $\sigma=0$) has a single equilibrium $\Xc=\theta$. It is known that solutions of \eqref{eq:CIR} are almost surely (a.s.)~non-negative for all parameter values, and a.s.~positive if and only if $2\kappa\theta>\sigma^2$. This inequality is known as `Feller's condition': if it is violated then trajectories can achieve the boundary at zero.  It is straightforward to show that \eqref{eq:CIR} is mean-square dissipative however, unless the Feller condition holds, the diffusion coefficient $G(x)=\sigma\sqrt{x}$ does not satisfy \eqref{eq:D^2f+D^2g}.
It is known (see~\cite{CIR}) that $\expect{X(t)}=X_0e^{-\kappa t}+\theta(1-e^{-\kappa t})$, and therefore the solution is mean-reverting: $\lim_{t\to\infty} \expect{X(t)} = \theta$, with a timescale for that reversion determined by $\kappa$. Note also that $\lim_{t\to\infty}\text{Var}[X(t)]=\theta\sigma^2/2\kappa$. 

The mean-square nonlinear stability of $\Xc$  is determined from \eqref{eq:MU} and   $\MU= 0$ when $-8\kappa\theta+4\kappa\delta_1+(1+\delta_2)\sigma^2=0$ for arbitrarily small $\delta_1$, $\delta_2$. 
This means for $\sigma>2\sqrt{2\kappa\theta}$ the deterministic equilibrium $\Xc$ does not satisfy the condition of non-linear mean-square stability in Definition \ref{Def:nonlinear-m-s-stability}, but is stable in mean. See Figure~\ref{fig:CIRBifA} for the bifurcation diagram and Figure~\ref{fig:CIRTrajB} for sample trajectories where $\sigma=2\sqrt{2\kappa\theta}$ (where the positive real parts of the numerical approximation are plotted). 

\begin{figure}[htbp]
    \begin{center}
     \begin{subfigure}[b]{0.49\textwidth}
         \includegraphics[width=\textwidth]{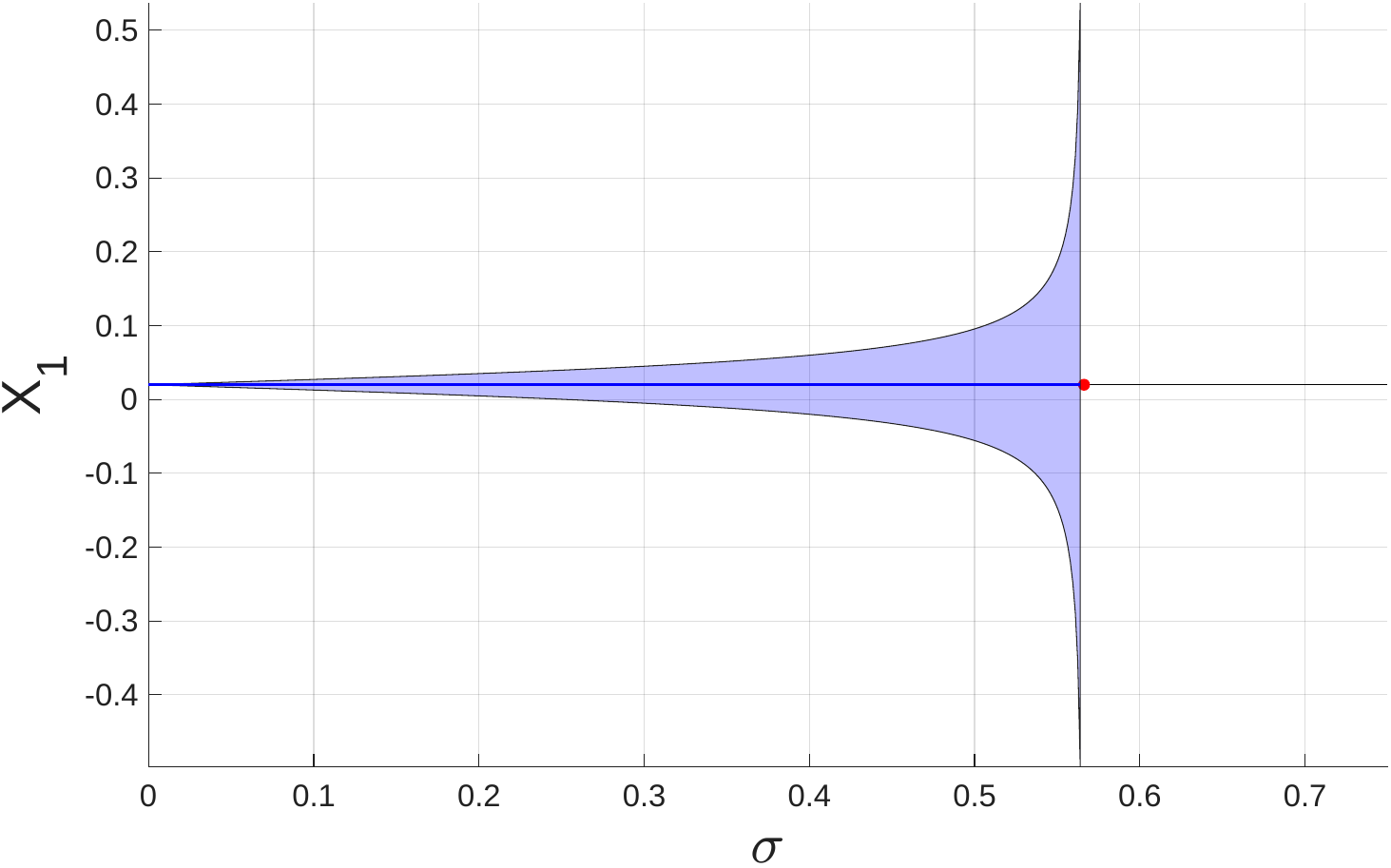}
    \caption{}
    \label{fig:CIRBifA}
    \end{subfigure}
    \hfill
    \begin{subfigure}[b]{0.49\textwidth}
    \includegraphics[width=\textwidth]{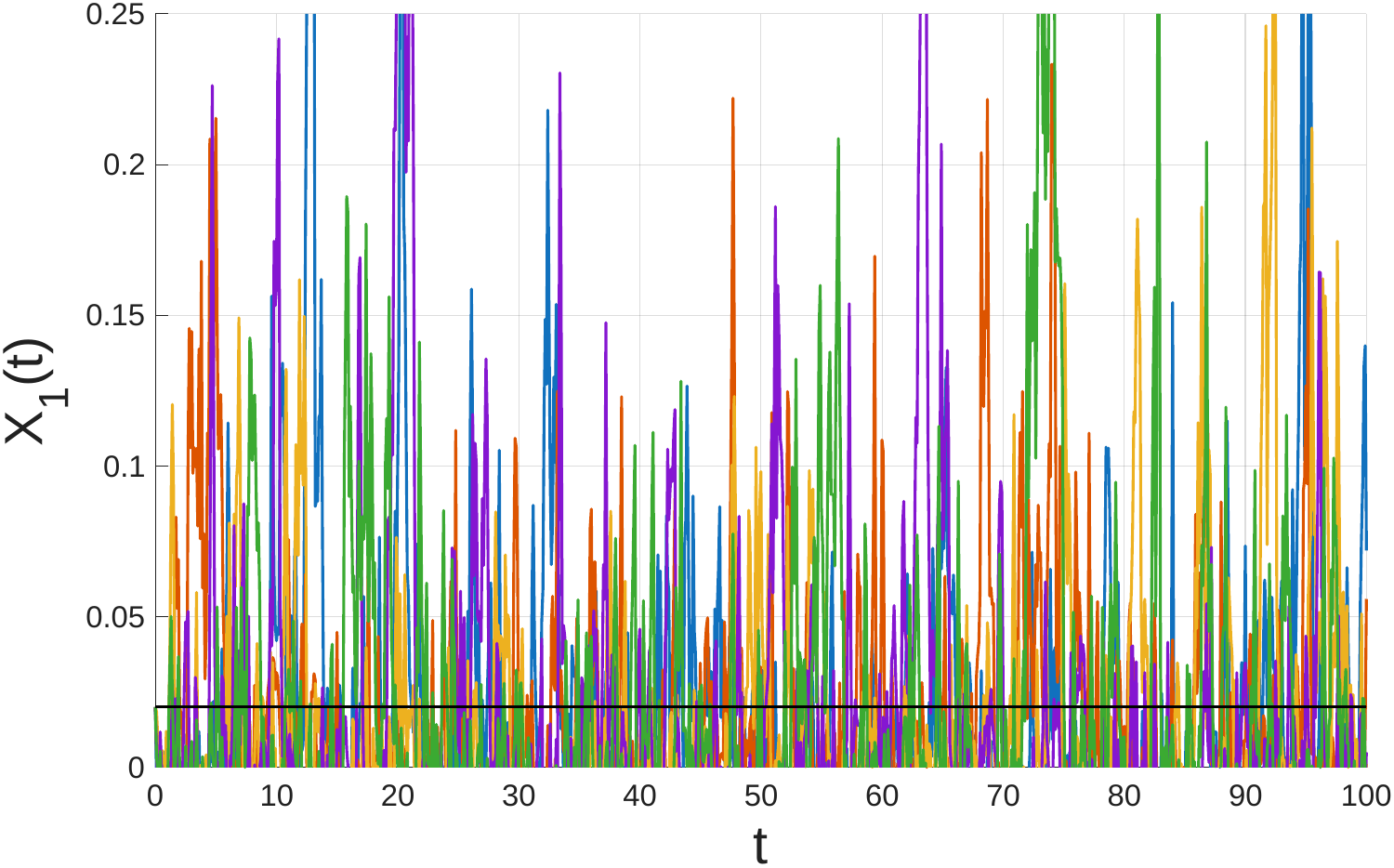}
        \caption{}
    \label{fig:CIRTrajB}
    \end{subfigure}
    \end{center}
    \caption{ (a) Bifurcation diagram for the CIR SDE \eqref{eq:CIR} as $\sigma$ varies. (b) Sample trajectories (real parts) at $\sigma=4\sqrt{0.02}$ where $\Xc=\theta$ stops being mean-square nonlinearly stable.}
    \label{fig:CIR}
\end{figure}

\subsection{Bistability example.}
\label{sec:BiStable}
We now consider the SDE discussed in Section~\ref{sec:intro}. 
The deterministic system in $d=2$ exhibits bistability and hysteresis and we consider multiplicative noise with $m=3$. 
\begin{align*}
    d X_1(t) &= \left[\gamma + X_1(t) - (X_1(t))^3\right] dt + \sigma_{11} X_1(t) dW_1(t) 
    + \sigma_{13} X_1(t) X_2(t) dW_3(t),\\ 
  d X_2(t) &= -X_2(t) dt 
  +  \sigma_{22} X_2(t) dW_2(t) + \sigma_{23} X_1(t)X_2(t)dW_3(t).
\end{align*}
We take $\sigma_{11}=0.25$, $\sigma_{13}=0.01$, $\sigma_{22}=0.1$, and $\sigma_{23}=0.01$. 
We plot the first component $X_1$ as $\gamma$ is varied in 
Figure~\ref{fig:BiStabMultA} and in Figure~\ref{fig:BiStabMultB}
we illustrate some sample paths. As already noted it is evident that the mean-square bifurcations for the stochastic system do not occur at the deterministic fold points and that we observe transitions between the two deterministic equilibria.
Similar bifurcation diagrams and sample paths are observed in the case of additive noise for this example. 

\subsection{Lorenz equations.}
\label{sec:lorenz}
We consider the classic Lorenz equations ($d=3$) with multiplicative noise ($m=3$),
\begin{equation*}
\begin{aligned}
    dx(t) & = s\big(y(t)-x(t)\big)dt + \sigma_{11}x(t) dW_1(t),\\
    dy(t) & = \Big[x(t)\big(\rho-z(t)\big)-y(t)\Big]dt +\sigma_{23}x(t)z(t) dW_3(t) +  \sigma_{22}y(t) dW_2(t), \\
    dz(t) & = \big[x(t)y(t)-b z(t)\big]dt +\sigma_{32}x(t)y(t) dW_{2}(t)+  \sigma_{33} z(t) dW_3(t).
\end{aligned}
\end{equation*}
We consider $\rho$ as the bifurcation parameter and fix $b=8/3$ and $s=10$ (the typical values of the parameters).
Recall that the equilibria of  the deterministic Lorenz system are $(x^\ast, y^\ast, z^\ast) = (0,0,0)$ for all $\rho\geq 0$, and  $(x^\ast, y^\ast, z^\ast) = (\pm\kappa, \pm\kappa, \rho -1 )$ for $\rho >1$ 
where $\kappa = \sqrt{b(\rho-1)}$.
Typically the `chaotic' or `strange attractor' is seen for $\rho>28$ for the standard parameter values.

First we show that the SDE is mean-square dissipative. Using the notation $X=(x,y,z)$ we have
$$
F(X) = \begin{pmatrix} 
s(y-x) \\
x(\rho - z) - y \\
xy - b z
\end{pmatrix}, 
\quad 
G(X) = \begin{pmatrix} 
\sigma_{11} x & 0 &0 \\
0& \sigma_{22} y & \sigma_{23} xz \\
0 &  \sigma_{32} xy & \sigma_{33} z 
\end{pmatrix} .
$$
The Lyapunov function for the (deterministic) Lorenz system reads 
$V(x,y,z) = \rho x^2 + s y^2 + s(z-2\rho)^2=:\|\widetilde X\|^2$. Note that moment bounds for $\tilde X= (\sqrt{\rho} x, \sqrt{s} y, \sqrt{s} (z-2\rho))^T$ imply moment bounds for $X$ since $\tilde X$ is only a scaling and constant shift of $X$. If we were to apply the It\^o formula to $V$ as in the proof of Lemma~\ref{lem:dissipativemomentbound} (see \eqref{eq:forlorenz}) then, rather than \eqref{ass:dissipative}, we would encounter
\begin{equation*} 
\begin{aligned}
&\frac 12 \langle DV(X), F(X) \rangle + \frac 1 4|{\rm Tr } (G(X)^T  D^2 V(X) G(X))| = 
- s\rho x^2 + s \rho yx + s \rho xy - s xzy - s y^2 \\& \quad+ s xy(z-2\rho)- s b z(z-2\rho)  + 
\frac 12\big[\rho\sigma_{11}^2 x^2 + s\sigma_{22}^2 y^2+  s\sigma_{33}^2 z^2
+ s x^2 (\sigma_{23}^2 z^2 + \sigma_{32}^2 y^2)\big]\\
& = -\rho(s- \sigma_{11}^2/2 ) x^2 
- s(1-  \sigma_{22}^2/2 - \sigma_{32}^2 x^2/2) y^2 -
s(b - \sigma_{33}^2/2- \sigma_{23}^2 x^2/2)\Big[(z-2\rho)^2
\\
& \quad +2\rho\Big(\frac{(b -  \sigma_{33}^2 -  \sigma_{23}^2 x^2)}{(b - \sigma_{33}^2/2- \sigma_{23}^2 x^2/2)} z - 2\rho \Big)\Big].
\end{aligned}
\end{equation*}
For $\sigma_{23}=\sigma_{32}=0$ and appropriately small $\sigma_{ii}$, $i=1,2,3$, we obtain 
$$
\frac 12 \langle DV(X), F(X) \rangle + \frac 1 4|{\rm Tr } (G(X)^T  D^2 V(X) G(X))|  \leq - \alpha_2 V(X) + \alpha_3,
$$
for $\alpha_3 \geq  4\rho^2\big[  (b- \sigma_{33}^2)^2  /[(2 b - \sigma_{33}^2)^2(1-\tilde \alpha_2)] +  \sigma^2_{33}/(2b-\sigma_{33}^2)\big]$ and  $0<\alpha_2 = \min\{(s - \sigma_{11}^2/2), (1 - \sigma_{22}^2/2), \tilde \alpha_2(b - \sigma_{33}^2/2)\}$, where $0< \tilde \alpha_2<1$.

If $\sigma_{23}$ and $\sigma_{32}$ are non-zero, for initial conditions for $x$ and sufficiently small $\sigma_{23}$ and $\sigma_{32}$ satisfying 
\begin{equation}\label{eq:lorenzDissCond}
|x|^2 \leq  \kappa  \quad \text{ and } \quad   \rho^2 \leq  \kappa \alpha_2 (1- \tilde \alpha_2) (e/4),
\end{equation}
where $\kappa = \min\{(1 - \sigma^2_{22})/\sigma^2_{32}, ( b -  \sigma_{33}^2)/\sigma_{23}^2 \}$, 
we obtain the dissipativity condition. 

In our numerical experiments we take 
$\sigma_{11}=\sigma_{22}=\sigma_{33}=0.01$ and 
 for linear diagonal noise we have $\sigma_{23}=\sigma_{32}=0 $. In Figure~\ref{fig:LorenzBif} we plot the bifurcation diagrams for the linear diagonal noise in~(a) and non-linear noise in~(b) where $\sigma_{23}=\sigma_{32}=0.01$ showing $x$ as $\rho$ is varied. We observe that with the nonlinear noise $\BETA$ is larger, in particular as $\rho$ increased.
Figure~\ref{fig:LorenzTraj} shows sample trajectories of $x(t)$ for $t\in[0,100]$. For $\rho=10$ we observe the sample trajectories remain close to the nonlinearly mean-square stable equilibria. However for larger $\rho=20$
we see emergence of oscillations. Although equilibria are mean-square stable we observe transitions to this other type of stable object (that is likely to be a stochastic periodic orbit).
\begin{figure}[htbp]
    \begin{center}
     \begin{subfigure}[b]{0.49\textwidth}
         \includegraphics[width=\textwidth]{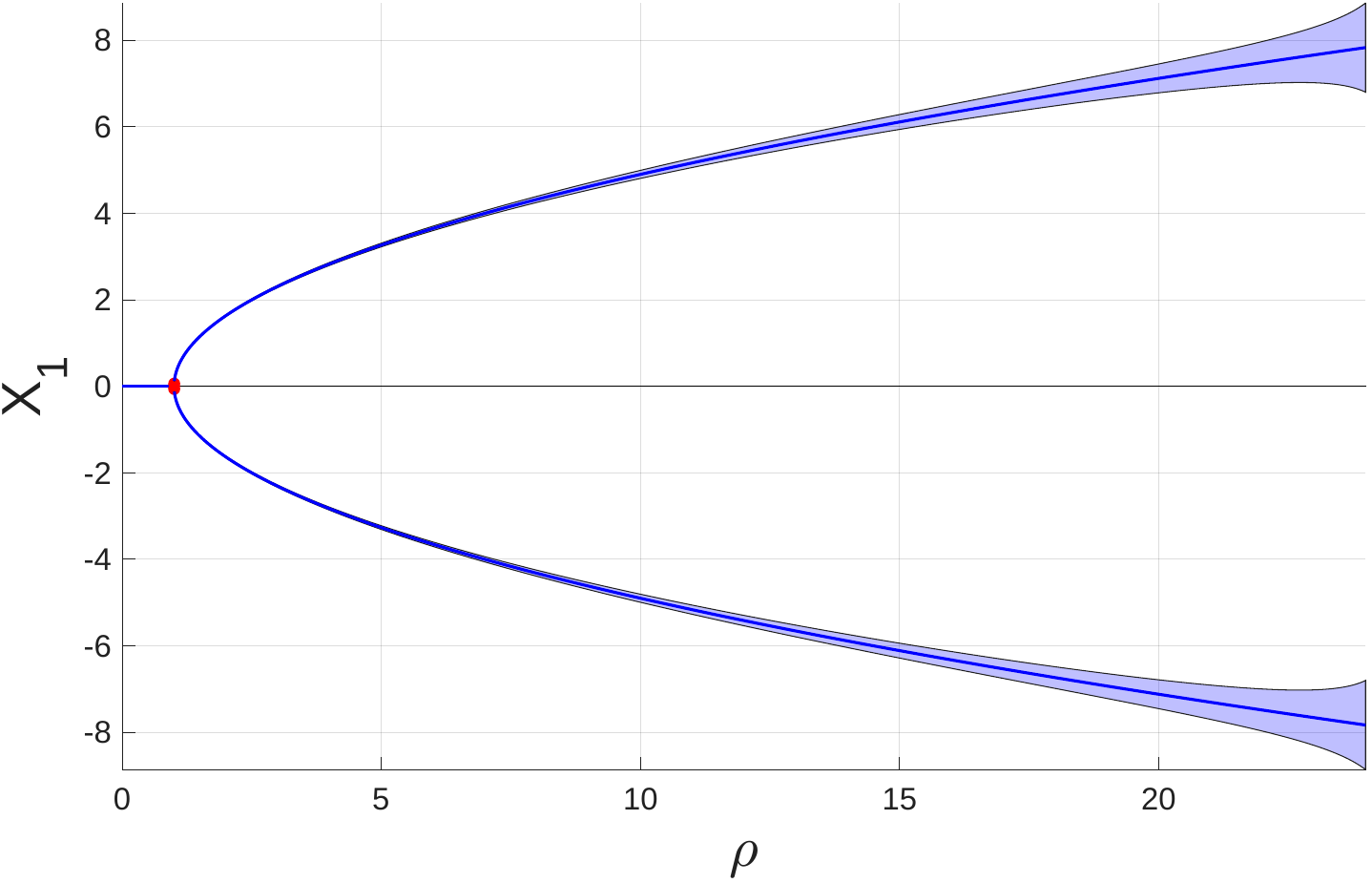}
    \caption{}
    \label{fig:LorenzBifA}
    \end{subfigure}
    \hfill
    \begin{subfigure}[b]{0.49\textwidth}
    \includegraphics[width=\textwidth]{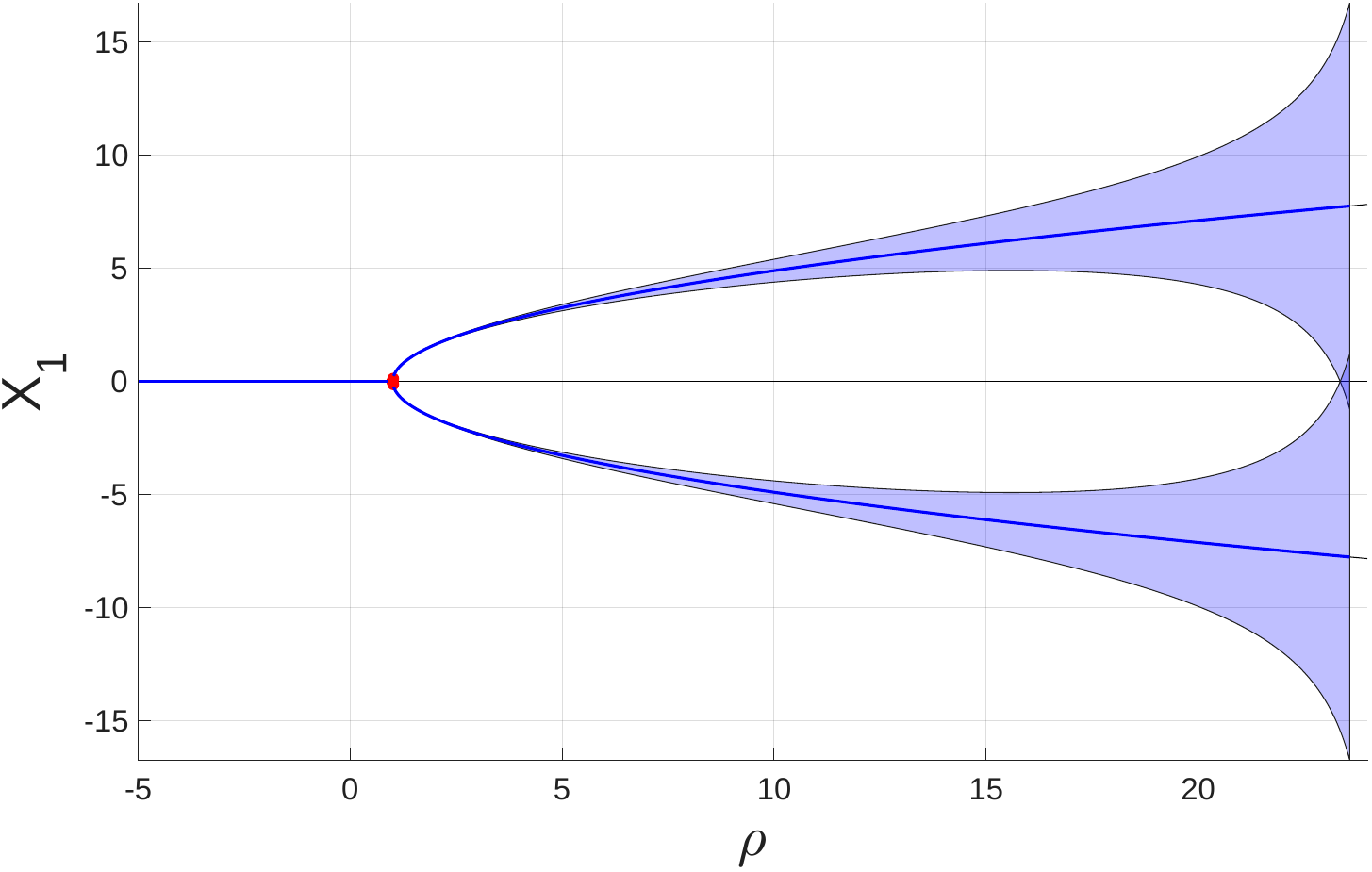}
        \caption{}
    \label{fig:LorenzBifB}
    \end{subfigure}
    \end{center}
    \caption{Bifurcation diagrams for the Lorenz equations as $\rho$ varies (a) diagonal linear multiplicative noise and (b) non-linear multiplicative noise.}
    \label{fig:LorenzBif}
\end{figure}
\begin{figure}[htbp]
    \begin{center}
     \begin{subfigure}[b]{0.49\textwidth}
         \includegraphics[width=\textwidth]{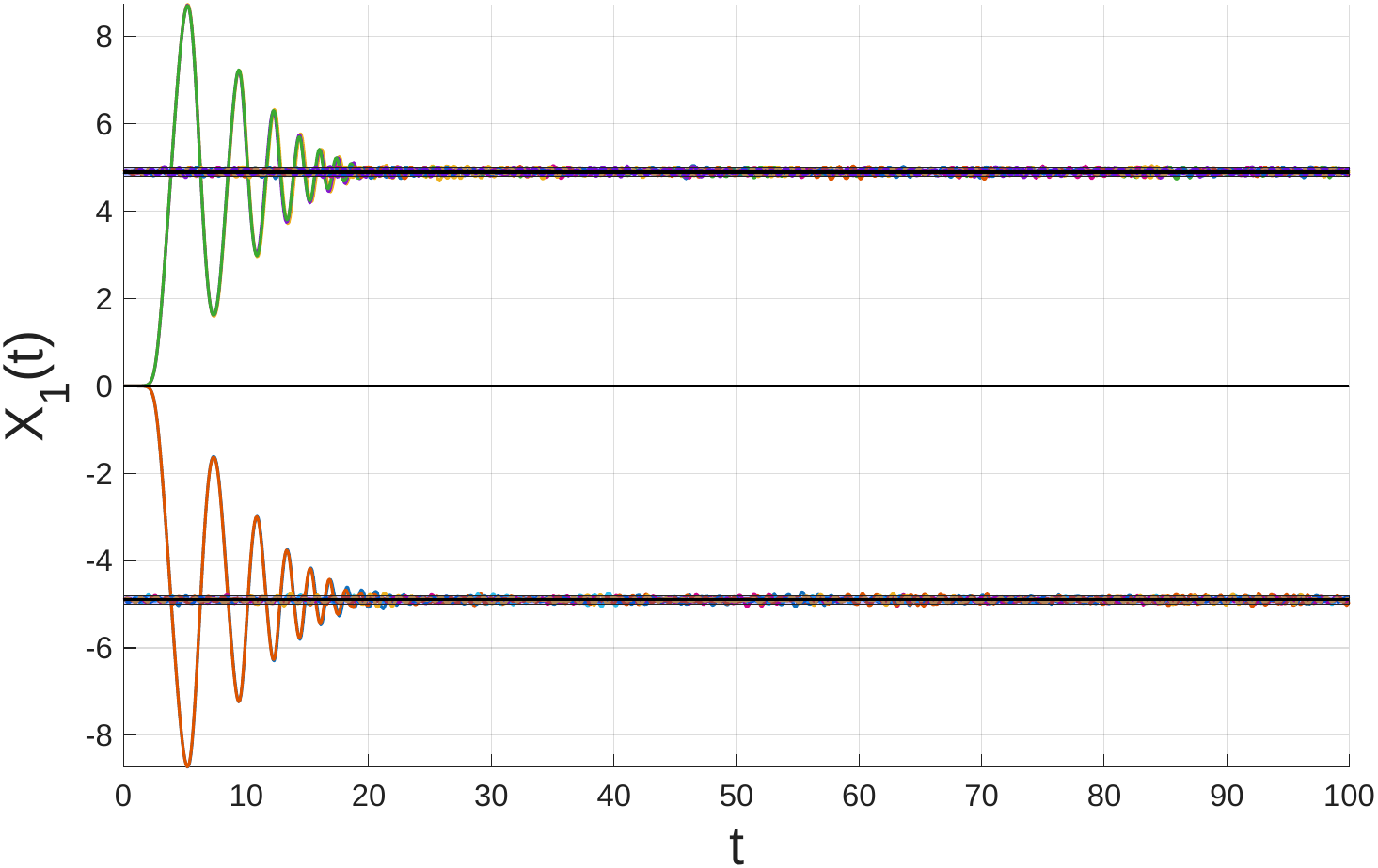} 
    \caption{$\rho=10$}
    \label{fig:LorenzTrajA}
    \end{subfigure}
    \hfill
    \begin{subfigure}[b]{0.49\textwidth}
        \includegraphics[width=\textwidth]{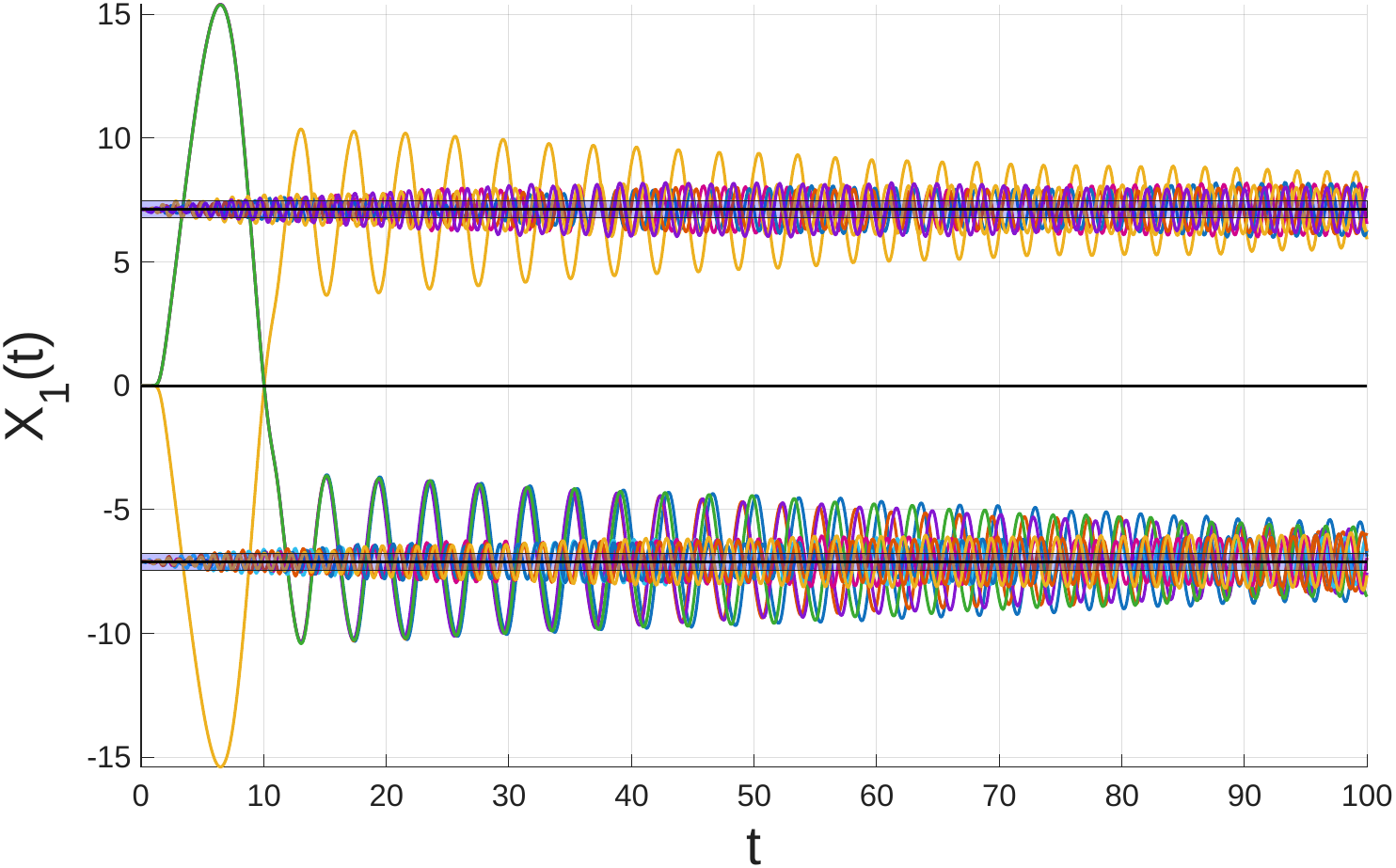}
        \caption{$\rho=20$}
    \label{fig:LorenzTrajB}
    \end{subfigure}
    \begin{subfigure}[b]{0.49\textwidth}
    \includegraphics[width=\textwidth]{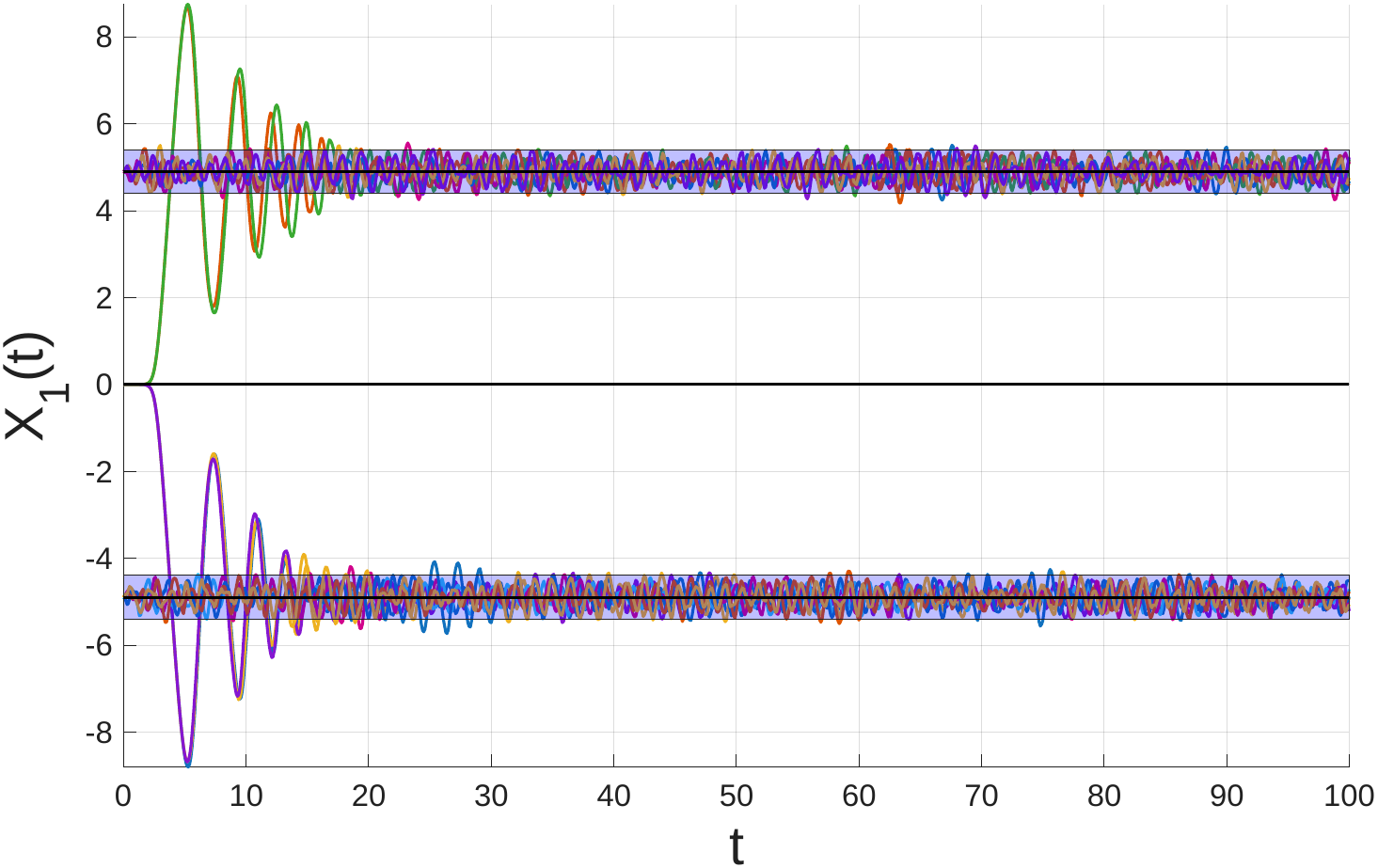} 
    \label{fig:LorenzTrajC}
            \caption{$\rho=10$}
    \end{subfigure}
        \begin{subfigure}[b]{0.49\textwidth}    
    \includegraphics[width=\textwidth]{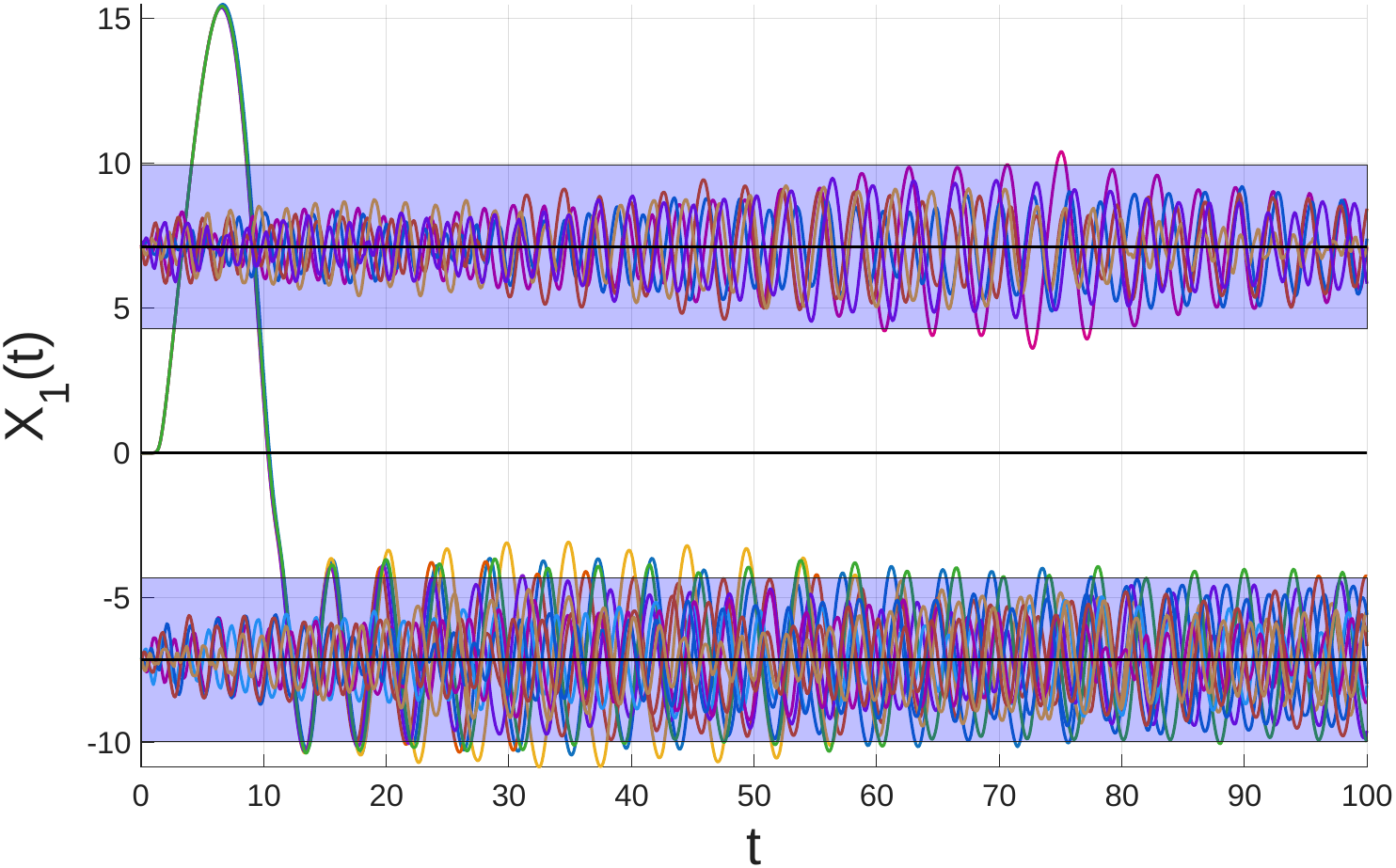}
    \label{fig:LorenzTrajD}
            \caption{$\rho=20$}
    \end{subfigure}
    \end{center}
    \caption{Sample paths for the Lorenz equations showing $x(t)$ for diagonal linear multiplicative noise (a), (b) and non-linear multiplicative noise (c), (d).}
    \label{fig:LorenzTraj}
\end{figure}

\subsection{Allen-Cahn system of SDEs.}
\label{sec:Allen-Cahn}

We consider a large system of SDEs $(d=m=50)$ that originates from a standard finite difference approximation in space of a stochastic semilinear heat equation with periodic boundary conditions and with space-time white noise, see for example \cite{LordPowellShardlow}.
That is we set $\Delta x=2\pi/(d-1)$ and consider the system of SDEs 
$$dX(t)= \big[ AX(t) + \gamma X(t) - (X(t))^3 \big] dt + \sigma X(t) dW(t), $$
where $\sigma = \overline{\sigma}/\sqrt{\Delta x}$, with $\overline{\sigma}=0.1$, and  
$$
A = \frac{1}{\Delta x^2}\begin{pmatrix}
   -2 & 1 & 0 & \ldots & & 1 \\
    1 & -2 & 1 & 0 & \ldots  & 0 \\
    0 & 1 & -2 & 1 & \ldots  & 0 \\
     & & \ddots & \ddots & \ddots & \\
    0 & 0 & \ldots & 1 &  -2& 1 \\
    1 & 0  & \ldots &0  & 1 & -2  
\end{pmatrix}.
$$
Dissipativity follows similarly as for the pitchfork bifurcation.
We plot in Figure~\ref{fig:ACBifA} the bifurcation diagram for this Allen-Cahn SDE, showing $\|X\|$ as $\gamma$ is varied.  Figure~\ref{fig:ACBifB} illustrates the evolution of $\|X(t)\|$ for $t\in[0,T]$ for some sample trajectories. We note how well $\BETA$ captures the observed variations in these samples.

\begin{figure}[htbp]
    \begin{center}
     \begin{subfigure}[b]{0.49\textwidth}
         \includegraphics[width=\textwidth]{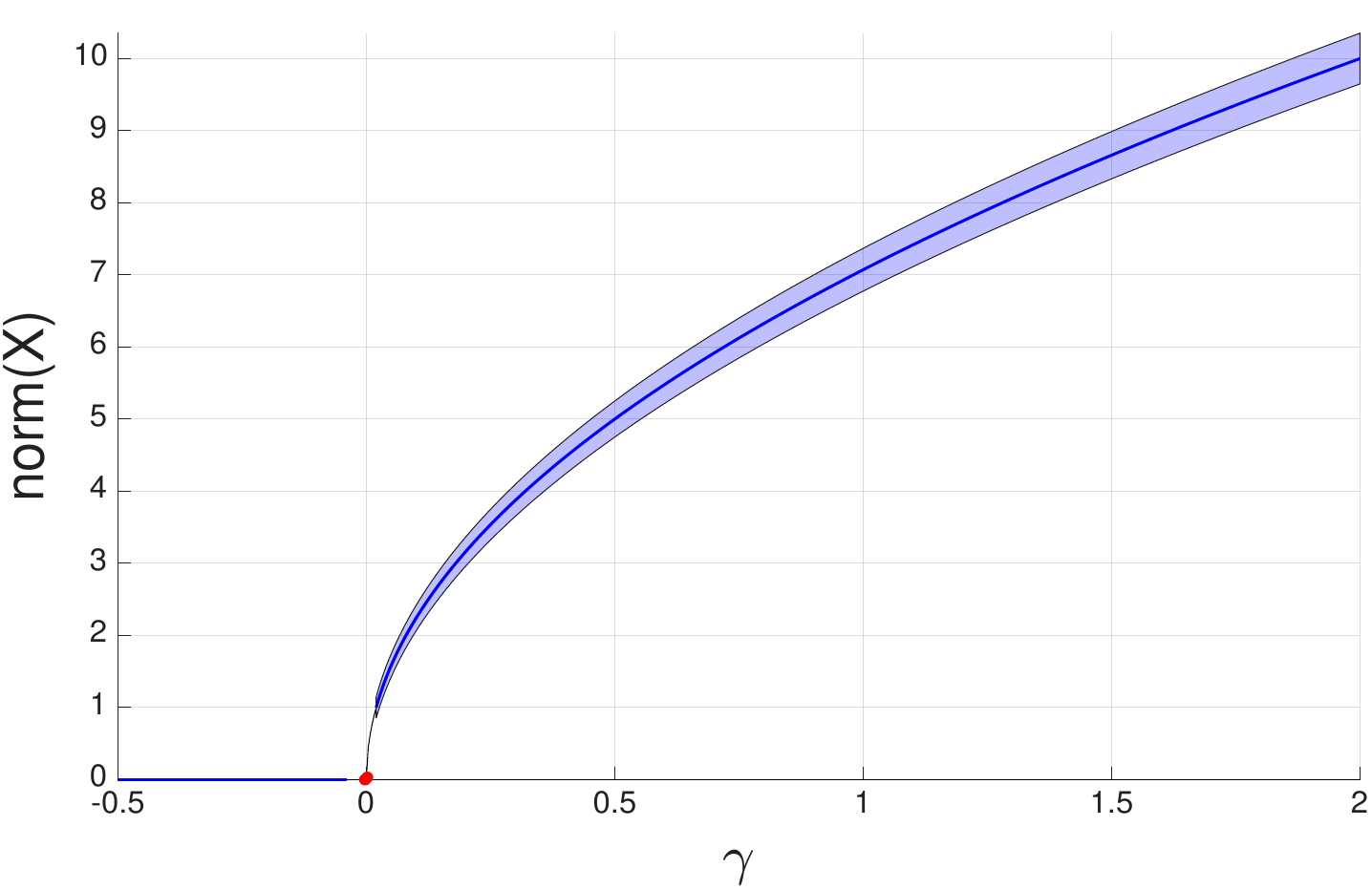}
    \caption{}
    \label{fig:ACBifA}
    \end{subfigure}
    \hfill
    \begin{subfigure}[b]{0.49\textwidth}
    \includegraphics[width=\textwidth]{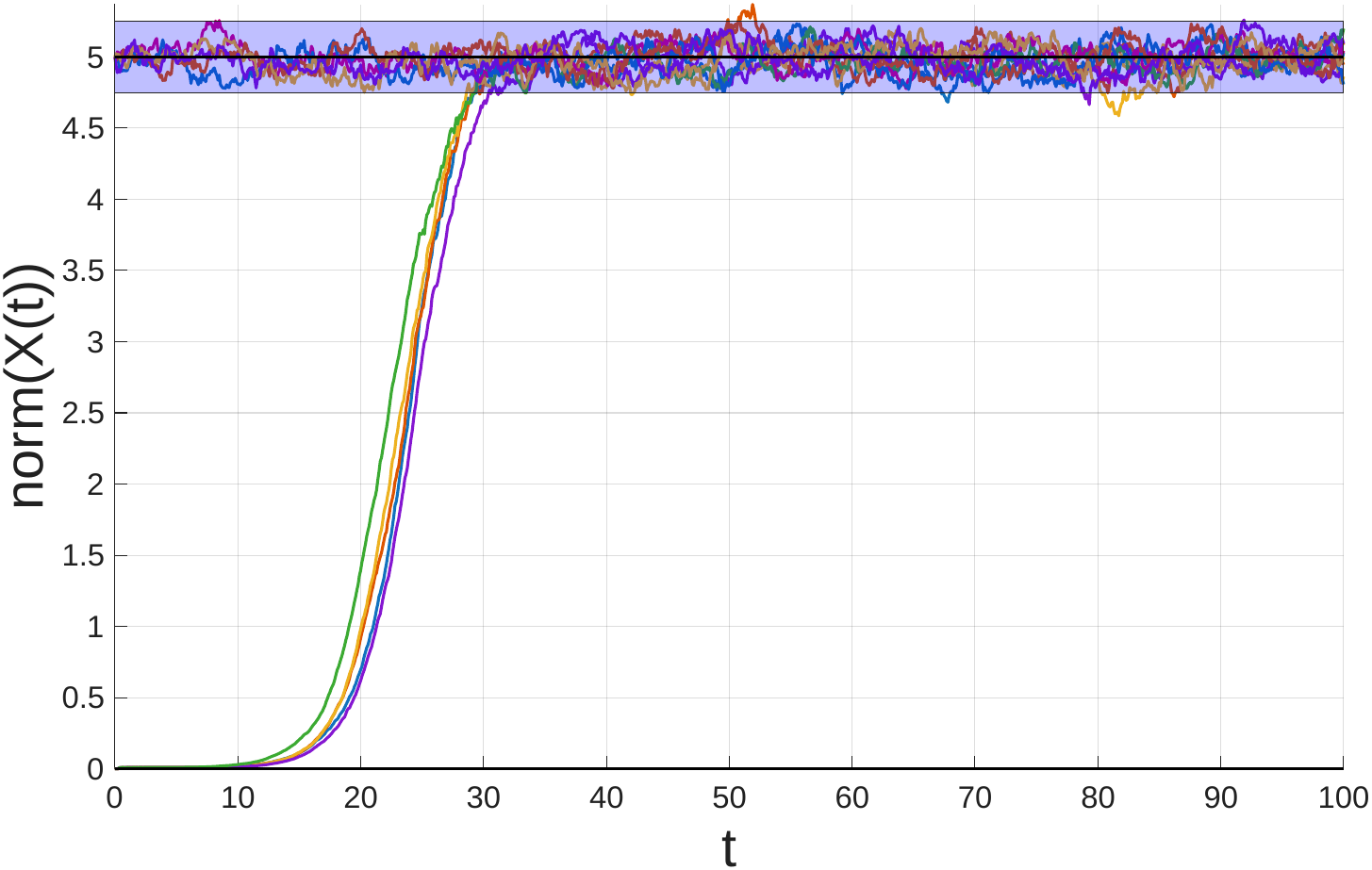}
        \caption{}
    \label{fig:ACBifB}
    \end{subfigure}
    \end{center}
    \caption{ (a) Bifurcation diagram for the Allen-Cahn SDE as $\gamma$ varies. (b) Norm of sample trajectories at $\gamma=0.5$.}
    \label{fig:AC}
\end{figure}

\section{Discussion and conclusion.}
We have developed and illustrated the use of a new mean-square based approach to examine the effect of stochastic forcing on the stability of equilibria and bifurcations and illustrated this for both standard low dimensional as well as high dimensional SDEs. This does not replace existing approaches to analyse the dynamics of such stochastic systems either as random dynamical systems, through Lyapunov exponents, the approximation of manifolds, invariant measures or application of large deviation theory. Rather it offers a good starting point to investigate the effect of stochastic forcing on deterministic models of interest and to then apply these more sophisticated techniques for a refined study.
Since our approach is purely based on deterministic calculations it would be interesting to integrate this approach into existing numerical continuation software. This would allow, for example, two parameter continuation of bifurcations, tracking where $\MU=0$. This might be used to identify where stochastic forcing changes deterministic bifurcation points as a function of the two parameters.
The Lorenz example, Section~\ref{sec:lorenz}, illustrates that besides equilibria there are other interesting dynamical objects such as oscillations and limit cycles. Work is in progress to extend the results presented here for equilibria to periodic orbits. 
It would also be interesting to extend these results to other noise processes as well as to other underlying forms of dynamical systems.

\subsection*{Acknowledgments} The authors were funded by an International Centre for Mathematical Sciences Research in Groups award, for activity spanning 6--17 Jan 2025 that led to the inception of this work. For more information, see \url{https://icms.ac.uk/activities/research-in-group/oscillations-in-gene-regulatory-networks-stochastic-modelling-and-simulations/}.

\appendix
\section{Mean-square dissipativity and notions of stochastic stability.}
\label{sec:stabilityreview}

\subsection*{Notions of dissipativity}
The analysis in this article relied on the concept of mean-square dissipativity provided in Definition~\ref{Def:ms-dissipative}. The following, more general, definition is given by Schurz~\cite{Schurz2000}, reproduced here for $p=2$.
\begin{definition}
    \label{Def:ms-dissipative-schurz}
    Let $\left\{X(t):t\in[0,\infty)] \subseteq D \subseteq \mathbb{R}^d \right\}$ be a solution of the SDE \eqref{eq:main_d} corresponding to initial data $X(0)\in D$ which is measurable with respect to $\mathcal{F}_0$. The SDE is called 
    \begin{enumerate}
    \item mean-square dissipative if  there exist $r,R>0$ such that     $$\limsup_{t\to\infty}\expect{\|X(t)\|^2}<R^2$$
    whenever $\expect{\|X(0)\|^2}<r^2$;

    \item uniformly mean-square dissipative if  there exist $r,R>0$ such that $$\lim_{t\to\infty}\expect{\sup_{0\leq s \leq t}\|X(s)\|^2} <R^2$$
    whenever $\expect{\|X(0)\|^2} < r^2$.
    \end{enumerate}
\end{definition}
Schurz in \cite{Schurz2000} then considers a more general form of the dissipativity condition \eqref{eq:dissipative} 
allowing for time dependence in $\alpha_2$ and $\alpha_3$ (where we assumed constant $\alpha_2$ and $\alpha_3$).

To examine dissipativity in deterministic systems it is often assumed that $\langle x, F(x)\rangle \leq -\alpha_2 \|x\|^2$ holds for $\|x\|^2>R^2$. In the stochastic setting this suggests the following alternative to Assumption~\ref{ass:dissipative}.
\begin{assumption}
    \label{ass:dissipative_old}
    Let $R>0$ be fixed and $p\geq 2$ be given. Then for $\|x\|^2>R^2$ there is an $\alpha_2>0$ such that
\begin{equation}\label{eq:dissipative_old}
\langle x, F(x)\rangle + \frac{p-1}{2}\|G(x)\|^2_{\mathbf{F}}  \leq -\alpha_2\|x\|^2.   
\end{equation}
\end{assumption}
Similar conditions have been considered in \cite{Veretennikov,Khasminskii2012} for the drift and these have been termed as the Veretennikov–Khasminskii condition, see \cite{Butkovsky2014}. A similar condition also appears in more recent work ~\cite[Assumption 3.1]{liu2025ergodicestimatesonestepnumerical}. 
We note, however, that Assumption~\ref{ass:dissipative_old} implies 
Assumption~\ref{ass:dissipative}. In fact,  Assumption~\ref{ass:dissipative_old} implies that, in the mean-square sense, solutions either stay in a bounded region, or are attracted by this region exponentially fast.

\subsection*{Notions of stability}
We reproduce from \cite{KellyBuckwar2010} an overview of different notions of stability  used in the literature for SDEs (see also \cite{Khasminskii2012}) that may be applied in the special case where $G(\Xc)=0$, i.e.~the deterministic equilibrium $\Xc$ is also an equilibrium of the SDE \eqref{eq:main_d}, and hence, $Y\equiv 0$  is an equilibrium of the centred SDE \eqref{eq:main_YSDE}.

\begin{definition}\label{def:classicalStochStab}
Suppose that $x^*$ is an equilibrium of the SDE \eqref{eq:main_d} and let
$X$ be a global solution with initial data $X_0\in\mathbb{R}^d$. Then $Y=X-x^*$ is a solution of the centered SDE \eqref{eq:main_YSDE} with $Y_0=X_0-x^*$. We say that the equilibrium  $x^*$ is
\begin{enumerate}
\item stable in probability for $t\geq 0$ if  for any  $s\geq 0$ and $\varepsilon>0$,
\[
\lim_{Y_0\to 0}\mathbb{P}\left[\sup_{t>s}\|Y(t)\|>\varepsilon\right]=0;
\]
\item asymptotically stable in probability if  it is stable in probability and 
\[
\lim_{Y_0\to 0}\mathbb{P}\left[\lim_{t\to\infty}\|Y(t)\|=0\right]=1;
\]
\item pth-moment stable if for any $\varepsilon>0$, there exists a $\delta> 0$ such that
\[
\expect{\|Y(t)\|^p}<\varepsilon,\quad t\geq 0,
\]
whenever $\|Y_0\|^p<\delta$; if $p=2$, the equilibrium is said to be mean-square stable;
\item globally pth-moment asymptotically stable if  it is pth-moment stable, and for all $Y_0\in\mathbb{R}^d$,
\[
\lim_{t\to\infty}\expect{\|Y(t)\|^p}=0;
\] 
if $p=2$, the equilibrium is said to be mean-square asymptotically stable;
\item a.s.~stable if  for any $\varepsilon>0$ there exists $\delta> 0$ such that
\[
\|Y(t)\|<\varepsilon,\quad t\geq 0,\quad a.s,
\] 
whenever $\|Y_0\|<\delta$;
\item globally a.s.~asymptotically stable if  for all $Y_0\in\mathbb{R}^d$, 
\[
\lim_{t\to \infty}\|Y(t)\|=0\quad a.s.
\]
\end{enumerate}
\end{definition}

We observe that by Chebychev's inequality mean-square (asymptotic) stability implies (asymptotic) stability in probability of an equilibrium, but the converse is not true.

Next, we present a theorem from \cite{Khasminskii2012} that relates linear and nonlinear stability in probability. To this end we consider the centered nonlinear SDE \eqref{eq:main_YSDE} assuming that $Y\equiv0$ is an equilibrium. The corresponding linearisation is  \eqref{eq:sdeLinMult} with $\Lambda\equiv\Gamma_i\equiv0$, $i=1,\dots,m$, i.e.~a constant-coefficient linear system of the form
\begin{equation}\label{eq:SDEgenLinKh}
dX(t)=AX(t)dt+\sum_{i=1}^{m}B_iX(t)dW_i(t),\quad t\geq 0;\quad X(0)=X_0,
\end{equation}
where $A,B_1,\ldots,B_d\in\mathbb{R}^{d\times d}$. The following proposition, adapted from Theorem 7.1 in Khasminskii~\cite{Khasminskii2012} to our setting, provides conditions ensuring that the stability in probability of the zero solution of \eqref{eq:main_YSDE} is implied by that of the zero solution of~\eqref{eq:SDEgenLinKh}. 

\begin{proposition}\label{prop:linnonlin}

Suppose the solution $X\equiv 0$ of the linear system \eqref{eq:SDEgenLinKh} is globally a.s.~asymptotically stable (or asymptotically stable in probability), and there exists $\varepsilon,\gamma>0$ sufficiently small that the drift and diffusion coefficients of \eqref{eq:main_YSDE} satisfy 
\begin{equation*}
\|f(\xi)-A\xi\|+\sum_{i=1}^{m}\|g_i(\xi)-B_i \xi\|<\gamma \|\xi\|,
\end{equation*}
when $\xi\in\mathbb{R}^d$ satisfies $\|\xi\|<\varepsilon$, and  $g_i$ is the $i^{th}$ column  of $g$. Then the solution $Y\equiv 0$ of the SDE \eqref{eq:main_YSDE} is asymptotically stable in probability.
\end{proposition}
If $x^*$ is an equilibrium of the SDE \eqref{eq:main_d} we can apply  Proposition \ref{prop:linnonlin} to the centred SDE~\eqref{eq:main_YSDE} and its  linearisation~\eqref{eq:sdeLinMult}. Then, 
\begin{align*}
    f(\xi)-A\xi=R_F(y,x^*),\qquad  g_i(\xi)-B_i \xi=R_{G_i}(y,x^*), \quad i=1,\dots, m.
\end{align*}
Hence, if the remainder terms are small and zero is globally a.s.~asymptotically stable, (or asymptotically stable in probability) for \eqref{eq:sdeLinMult}, then the equilibrium $x^*$ is asymptotically stable in probability.

\printbibliography

\end{document}